\documentclass[11pt,letterpaper,reqno]{amsart}
\usepackage{fullpage}
\usepackage{amsmath,amsthm,amssymb,amscd}
\usepackage{enumerate}
\usepackage{enumitem}
\usepackage{array}
\usepackage{float}
\usepackage{bbm}
\usepackage{bm}
\usepackage{stmaryrd}
\usepackage{comment}
\usepackage{mathtools}
\usepackage{dsfont}
\usepackage{xcolor}

\usepackage{hyperref}
\hypersetup{
    colorlinks=true,
    linkcolor=blue,
    citecolor=blue,
    urlcolor=blue
}

\newtheorem{theorem}{Theorem}[section]

\newtheorem{corollary}[theorem]{Corollary}
\newtheorem{lemma}[theorem]{Lemma}

\newtheorem{proposition}[theorem]{Proposition}
\newtheorem{conjecture}[theorem]{Conjecture}

\theoremstyle{definition}
\newtheorem{definition}[theorem]{Definition}

\newtheorem{remark}[theorem]{Remark}

\mathtoolsset{showonlyrefs=true}
\numberwithin{equation}{section}
\numberwithin{figure}{section}
\numberwithin{table}{section}

\allowdisplaybreaks

\let\subsectiontemp\subsection
\renewcommand{\subsection}[1]{ 
    \subsectiontemp{#1} \hfill\vspace{0.5\linespacing} 
}

\newcommand{\lrabs}[1]{\!\left\lvert #1 \right\lvert}
\newcommand{\lrp}[1]{\!\left(#1\right)}
\newcommand{\lrb}[1]{\!\left[#1\right]}
\newcommand{\lrcb}[1]{\!\left\{#1\right\}}

\newcommand{\ZZ}{\mathbb{Z}}

\newcommand{\QQ}{\mathbb{Q}}

\newcommand{\nw}{\mathrm{new}}
\newcommand{\eigen}{\operatorname{eigen}}

\renewcommand{\Im}{\mathrm{Im}}

\title{Only finitely many modular Jacobians are \\ supersingular modulo a given prime}

\author[A. J. Kumar]{Aarya J. Kumar}
\address[A. J. Kumar]{Princeton University}
\email{ajkumar@princeton.edu}

\author[S. Mondal]{Sargam Mondal}
\address[S. Mondal]{Massachusetts Institute of Technology}
\email{sargam@mit.edu}

\author[E. Ross]{Erick Ross}
\address[E. Ross]{School of Mathematical and Statistical Sciences, Clemson University, Clemson, SC}

\author[H. Xue]{Hui Xue}
\address[H. Xue]{School of Mathematical and Statistical Sciences, Clemson University, Clemson, SC}
\email{huixue@clemson.edu}

\keywords{Jacobian of modular curves; p-adic Plancherel measure; supersingular primes; supersingular Jacobian}
\subjclass{11G18, 14G35, 11G25}
\begin{document}

\begin{abstract}
In this paper, we study supersingularity of modular Jacobians $J_0(N)$ and abelian varieties $A_f$ of $\mathrm{GL}_{2}$-type from both the vertical perspective (where the prime $p$ is fixed, and the level $N$ varies) and the horizontal perspective (where the newform $f$ is fixed, and the prime $p$ varies). Vertically, we prove that $J_{0}(N)$ is supersingular modulo a given prime $p$ for only finitely many levels $N$. Horizontally, we show that for sufficiently large $p$, the reduction of $A_f$ modulo $p$ is supersingular if and only if it isogenous to a power of a supersingular elliptic curve.
\end{abstract}

\maketitle

\begin{center}
    \textit{In memory of our coauthor and friend, Erick Ross, who tragically passed away during the preparation of this work.}
\end{center}
\vspace{1ex}

\section{Introduction}\label{section:introduction}

Let $A$ be an abelian variety of dimension $g$ defined over $\mathbb{Q}$ and $p$ be a prime of good reduction for $A$. Let $A/\mathbb{F}_p$ denote the reduction of $A$ at $p$, and let $P_{A,p}(X) \in \mathbb{Z}[X]$ denote the Weil polynomial of $A/\mathbb{F}_p$, so that the roots of $P_{A,p}(X)$ are the Frobenius eigenvalues of $A/\mathbb{F}_p$, each of magnitude $\sqrt{p}$ by the Weil conjectures. We say that $A$ is \emph{supersingular} at $p$ if $A/\mathbb{F}_p$ is isogenous over $\overline{\mathbb{F}}_p$ to a product of supersingular elliptic curves. Equivalently, $A$ is supersingular at $p$ if every root of $P_{A,p}(X)$ has $p$-adic valuation $1/2$, so that the Newton polygon of $A_p$ is the straight line of slope $1/2$. By the Manin--Oort criterion (see \cite[Section 1]{Ayotte2016}, for example), this is in turn equivalent to the condition that every normalized Frobenius eigenvalue is a root of unity. 

An interesting problem is to determine the distribution of supersingular primes for a given abelian variety $A/\mathbb{Q}$. Let us set the following notation.
\begin{definition}
    For an abelian variety $A/\mathbb{Q},$ let
    $\operatorname{SS}(A) := \left\{ \text{$p$ prime} : \text{$A$ is supersingular modulo $p$}  \right\}\!.$
\end{definition}
Elkies \cite{elkies-1987} showed that $\operatorname{SS}(E)$ is infinite for any elliptic curve $E/\mathbb{Q}.$ For higher-dimensional abelian varieties, the analogous statements are largely conjectural: it is expected, but not known in general, that $ \operatorname{SS}(A)$ is finite for generic non-CM abelian varieties of dimension $g > 1$ over $\mathbb{Q}.$ Hui \cite{HuiSS2025} recently proved that $\operatorname{SS}(A)$ has density zero for $A/\mathbb{Q}$ a non-CM abelian variety or K3 surface. On the other hand, the infinitude of $\operatorname{SS}(A)$ has been established in some special cases: e.g.,  for certain abelian surfaces with quaternionic multiplication \cite{baba-2006}, and for certain abelian fourfolds \cite{4folds2025}. However, the vertical question -- for fixed $p$, how supersingularity behaves as $A$ varies over a family -- has received comparatively little attention. In this paper, we study both the vertical and horizontal perspectives of this question in the setting of modular Jacobians and abelian varieties of $\operatorname{GL}_2$-type.

\subsection{Modular Jacobians and abelian varieties of \texorpdfstring{$\operatorname{GL}_2$}{GL2}-type} \label{subsection:modular-jacobians}

We let $J_0(N)$ denote the Jacobian of the modular curve $X_0(N)$ and $J_0^{\mathrm{new}}(N)$ denote its newpart.
Recall that the Galois orbits of newforms $f \in S_2^{\mathrm{new}}(N)$ correspond to abelian varieties $A_f$ of $\operatorname{GL}_2$-type via the Eichler--Shimura construction. This abelian variety $A_f$ has $p$-Weil polynomial
\[
P_{A,p}(x)=\prod_{\tau : K_f \hookrightarrow \mathbb{C}} \left(x^2 - \tau(a_p(f))x + p\right),
\]
where $a_p(f)$ denotes the $p$-th Fourier coefficient of $f$ and where $\tau$ ranges over all embeddings of the Hecke field $K_f := \mathbb{Q}(\{a_n(f)\}_{n \geq 1})$ into $\mathbb{C}$. By \cite[Page 1]{MurtySinhaDecomp}, we have up to $\mathbb{Q}$-isogeny the decomposition
$$J_0(N) \sim_{\mathbb{Q}} \prod_{M | N} \prod_{f} A_f^{\sigma_0(N/M)},$$
where $f$ runs over Galois orbits of newforms in $S_2^{\mathrm{new}}(M)$. Each $A_f$ is $\mathbb{Q}$-simple, with
\[
\operatorname{Conductor}(A_f) = M^{[K_f : \mathbb{Q}]}, \qquad \dim A_f = [K_f : \mathbb{Q}], \qquad \operatorname{End}^0(A_f) \cong K_f.
\]Furthermore, the Atkin--Lehner involutions act naturally on $X_0(N)$ and hence on $J_0(N)$ by functoriality. Hence, we have the decomposition
\[
J_0(N) \sim_{\mathbb{Q}} \prod_{\sigma} J_0^{\sigma}(N),
\]
where the product runs over all Atkin-Lehner sign pattern $\sigma$ for $N$ (see \S1.5 for the definition). Furthermore, for each sign pattern $\sigma$, we have that 
\begin{align}
    J_0^{\sigma}(N) \sim_{\mathbb{Q}} J_0^{\nw, \sigma}(N) \times J_0^{\text{old}, \sigma}(N) \qquad \text{where $J_0^{\nw, \sigma}(N) \sim_{\mathbb{Q}} \prod_{f} A_f,$}
\end{align}
with $f$ running over Galois orbits of newforms in $S_2^{\nw, \sigma}(N)$. Moreover, the multiset $\Lambda_N^{\nw, \sigma}$ of $p$-Frobenius eigenvalues of $J_0^{\nw, \sigma}(N)$, is precisely the multiset of the roots $\alpha_i,\overline \alpha_i$ of $x^2 - a_p(f_i) x + p$ for newforms $f_i \in S_2^{\nw, \sigma}(N)$. This identity $\Lambda_N =\{\alpha_i, \overline{\alpha}_i\}_{i=1}^{\dim S_k^{\nw,\sigma}(N)}$
yields the correspondence between conjugate pairs of $p$-Frobenius eigenvalues and Fourier coefficients
\begin{align}
    \label{eqn:correspondence_conpair-pFrob-eig_and_four-coeff}
    (\alpha_i, \overline \alpha_i) = (\sqrt p e^{i\theta}, \overline{\sqrt p e^{i\theta}}\, ) 
    \quad
    \xleftrightarrow{\hspace{8mm}} 
    \quad
    a_f(p) = 2 \sqrt p \cos \theta.
\end{align}

Now, we recently showed in \cite[Theorem 1.1]{KMRX} that the normalized Fourier coefficients \\$\lrcb{\frac{1}{\sqrt p} a_{f_i}(p)}_{i=1}^{\dim S_k^{\nw,\sigma}(N)}$ are equidistributed over $[-2,2]$ via the $p$-adic Plancherel measure $d\mu_p(x)$ (with effective error bounds). By the correspondence \eqref{eqn:correspondence_conpair-pFrob-eig_and_four-coeff}, this then means that the normalized $p$-Frobenius eigenvalues $\frac{1}{\sqrt p} \Lambda_N$ are equidistributed over the unit circle $\{e^{i\theta} : \theta \in [0,2\pi]/(0 \sim 2\pi)\}$.
We write down the precise error bounds for this equidistribution in the following proposition.
\begin{proposition} \label{prop:p-Frob}
Let $d\hat\mu_p(\theta)$ denote the measure
\begin{align}
    d\hat\mu_p(\theta) := \frac{p + 1}{\pi}  \frac{\sin^2 \theta}{\left( p^{1/2} + p^{-1/2} \right)^2 - 4 \cos^2 \theta} \, d \theta \qquad \text{over } \theta \in [0,2\pi]/(0 \sim 2\pi).
\end{align}
Then there exists a universally bounded family of positive real-valued functions $\left\{C_p^{\nw, {\operatorname{AL}}}\right\}_{p}$ such that for coprime $p$ and $N$ and any Riemann-integrable test function $g$ over $[0,2\pi]/(0 \sim 2\pi)$,
\begin{align}
    \lrabs{
        \frac{1}{\# \Lambda_N^{\nw, \sigma}} \sum_{ \sqrt{p} \, e^{i \theta} \in \Lambda_N^{\nw, \sigma}} g\left( \theta \right) - \int_0^{2\pi} g(\theta) d \hat \mu_p(\theta)   
    } \le
    C_p^{\nw, {\operatorname{AL}}}(\log N) \cdot  \frac{\frac{1}{2} \delta(g) \log p}{\log N},
\end{align}
whenever $\Lambda_N^{\nw, \sigma}$ is non-empty. Here, $\delta(g)$ denotes the total variation of $g.$
\end{proposition}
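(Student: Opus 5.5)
The plan is to transfer the equidistribution result \cite[Theorem 1.1]{KMRX} from $[-2,2]$ to the circle through the correspondence \eqref{eqn:correspondence_conpair-pFrob-eig_and_four-coeff}.

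\textbf{Input from \cite{KMRX}.} I take that theorem in the form it gives for the normalized coefficients $x_i = a_{f_i}(p)/\sqrt p \in [-2,2]$, $f_i \in S_2^{\nw,\sigma}(N)$. For every Riemann-integrable $h$ on $[-2,2]$,
\[
\Bigl| \tfrac{1}{n}\textstyle\sum_{i=1}^{n} h(x_i) - \int_{-2}^{2} h\, d\mu_p \Bigr| \le C_p^{\nw,\operatorname{AL}}(\log N)\,\frac{\delta(h)\log p}{\log N},
\]
where $n=\dim S_2^{\nw,\sigma}(N)$ and the family $C_p^{\nw,\operatorname{AL}}$ is universally bounded.

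\textbf{Step 1: symmetrize $g$.} Let $\theta_i\in[0,\pi]$ with $x_i = 2\cos\theta_i$. Then $\Lambda_N^{\nw,\sigma}$, normalized by $\sqrt p$, is the multiset $\{e^{\pm i\theta_i}\}$, and $\#\Lambda_N^{\nw,\sigma}=2n$. Put $h(x) := \tfrac12\bigl(g(\theta)+g(2\pi-\theta)\bigr)$ with $x = 2\cos\theta$, $\theta\in[0,\pi]$. This is well defined and Riemann integrable on $[-2,2]$, and the left-hand average in the proposition equals $\frac1n\sum_i h(x_i)$. Since $\theta\mapsto 2\cos\theta$ is a monotone bijection $[0,\pi]\to[-2,2]$, total variation is preserved under the change of variable. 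Hence
\[
\delta(h)\le \tfrac12\bigl(\delta(g|_{[0,\pi]})+\delta(g|_{[\pi,2\pi]})\bigr)\le \tfrac12\,\delta(g).
\]
This is where the factor $\tfrac12$ comes from.

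\textbf{Step 2: match the measures.} I need $\int_{-2}^2 h\,d\mu_p = \int_0^{2\pi} g\,d\hat\mu_p$. The $p$-adic Plancherel density is
\[
d\mu_p(x)=\frac{p+1}{\pi}\,\frac{\sqrt{1-x^2/4}}{(p^{1/2}+p^{-1/2})^2-x^2}\,dx .
\]
Substitute $x=2\cos\theta$, so $dx = -2\sin\theta\,d\theta$ and $\sqrt{1-x^2/4}=\sin\theta$. This turns $d\mu_p$ into
\[
\frac{2(p+1)}{\pi}\,\frac{\sin^2\theta}{(p^{1/2}+p^{-1/2})^2-4\cos^2\theta}\,d\theta \quad\text{on } [0,\pi].
\]
That is twice the density of $\hat\mu_p$. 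The density of $\hat\mu_p$ is symmetric under $\theta\mapsto 2\pi-\theta$, so splitting $\int_0^{2\pi}$ into $[0,\pi]$ and $[\pi,2\pi]$ and reflecting the second half gives exactly $\int_{[0,\pi]}\tfrac12(g(\theta)+g(2\pi-\theta))\cdot 2\,d\hat\mu_p$. This equals $\int h\,d\mu_p$.

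\textbf{Conclusion and main obstacle.} Combining Steps 1 and 2 with the bound from \cite{KMRX} gives the stated inequality, with the same constants $C_p^{\nw,\operatorname{AL}}$. The nonemptiness hypothesis on $\Lambda_N^{\nw,\sigma}$ ensures $n\ge1$, so the normalization makes sense. The main point to verify is that the error term of \cite[Theorem 1.1]{KMRX} really has the form $C\cdot\delta(h)\log p/\log N$ for every Riemann-integrable $h$. If instead it is stated for intervals, or via a discrepancy bound, one first obtains it for general $h$ by Koksma's inequality, and the constant absorbs the discrepancy normalization. The other thing to check is the precise normalization of $\mu_p$, so that the factor of two in Step 2 matches exactly.
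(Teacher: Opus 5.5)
Your proposal is correct and is essentially the paper's argument: the paper takes the $[-2,2]$ bound with error $C_p^{\nw,\operatorname{AL}}(\log N)\,\delta(h)\log p/\log N$ from \cite[Theorem 1.1]{KMRX}, upgraded from intervals to general test functions via \cite[Corollary 2.6]{KMRX}, and then says the circle version ``follows immediately from a change of variables.'' Your symmetrization $h(2\cos\theta)=\tfrac12\bigl(g(\theta)+g(2\pi-\theta)\bigr)$, the bound $\delta(h)\le\tfrac12\delta(g)$ (which is exactly where the factor $\tfrac12$ in the statement comes from), and your check that $x=2\cos\theta$ carries $d\mu_p$ to twice the density of $d\hat\mu_p$ on $[0,\pi]$ are precisely the details hidden in that change of variables.
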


\begin{remark}
    Rather than following immediately from \cite[Theorem 1.1]{KMRX}, Proposition \ref{prop:p-Frob} requires us to make a few modifications to the statement of \cite[Theorem 1.1]{KMRX} in particular, we must introduce the test function $g$ and the family of functions $\{C_p^{\nw, \operatorname{AL}}\}_p.$ We discuss these modifications in \S\ref{subsection:equidistribution-survey}, and we later derive an explicit form for $C_p^{\nw,\operatorname{AL}}$ in Appendix \ref{section:explicit-form-of-function}.
\end{remark}

\begin{remark}
    Analogous results to Proposition \ref{prop:p-Frob} also old for $J_0^{\sigma}(N), J_0(N),$ and $J_0^{\nw}(N)$, with corresponding families of functions $\left\{C_p^{\operatorname{AL}}\right\}_p,$ $\{C_p\}_p,$ and $\{C_p^\nw\}_p.$ These are all discussed in \S\ref{subsection:equidistribution-survey}. and the families of functions are derived in Appendix \ref{section:explicit-form-of-function}.
\end{remark}

\subsection{Vertical results}\label{subsection:vertical-results}

Our primary goal from the vertical perspective is to prove the titular claim.
\begin{theorem}\label{thm:finiteness-of-supersingularity}
    Fix a prime $p$. Then $J_0^{\nw, \sigma}(N)$ is supersingular modulo $p$ for only finitely many $N.$
    Consequently, the same result also holds for $J_0(N)$, $J_0^\nw(N),$ and $J_0^{\sigma}(N)$.
\end{theorem}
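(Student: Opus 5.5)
The idea is to combine the Manin--Oort criterion with the equidistribution in Proposition \ref{prop:p-Frob}. Fix $p$ and suppose $J_0^{\nw,\sigma}(N)$ is supersingular modulo $p$, with $p \nmid N$ so that $p$ is a prime of good reduction. Every normalized Frobenius eigenvalue $e^{i\theta}$ coming from $\Lambda_N^{\nw,\sigma}$ is then a root of unity. Each such eigenvalue is a root of $x^2 - \frac{a_f(p)}{\sqrt p}x + 1$. Here $\frac{a_f(p)}{\sqrt p}$ is an algebraic integer times $p^{-1/2}$, so $\alpha = \sqrt p\, e^{i\theta}$ is an algebraic integer of degree at most $2[K_f:\mathbb{Q}]$. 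This degree is unbounded as $N$ grows, so we cannot bound the order of the root of unity directly.

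Instead I would use the following constraint. Supersingularity forces every root of $P_{A,p}$ to have $p$-adic valuation $1/2$. In particular $v_p(a_f(p)) \ge 1/2$, so $p \mid a_f(p)$ in the ring of integers, and hence $p \mid \operatorname{Tr}(a_f(p))$ on each Galois orbit. The cleaner route is Manin--Oort: $e^{i\theta} = \alpha/\sqrt p$ is a root of unity $\zeta$, so $\alpha^2 = p\zeta^2$. Also $\alpha$ is a Weil $p$-number, so $\zeta^2 \in \mathbb{Q}(\alpha)$, and the characteristic polynomial of $\alpha^2$ over $\mathbb{Q}$ is $p^{d}\Phi(x/p)$ for a product of cyclotomic factors. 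I expect this alone does not bound the order of $\zeta$, so I will not rely on it.

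So the plan is purely measure-theoretic. Supersingularity puts all the angles $\theta$ in the countable set $R=\{\theta : e^{i\theta}\text{ a root of unity}\}$, which has $\hat\mu_p$-measure zero. Choose a test function $g$ of bounded variation that is $1$ on a large portion of $R$ and has small $\hat\mu_p$-integral. This cannot be done on all of $R$ with finite variation, because $R$ is dense. The fix is to refine the previous step: $a_f(p)$ is totally real with $|\tau(a_f(p))|\le 2\sqrt p$ and $a_f(p)^2 = p\,(\zeta+\zeta^{-1})^2$, so $a_f(p)^2/p = 2+\zeta^2+\zeta^{-2}$ is an algebraic integer. Since $a_f(p)^2$ is an algebraic integer, $p \mid a_f(p)^2$. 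Combined with the fact that $\mathbb{Q}(a_f(p))$ is totally real and has bounded-ramification constraints at $p$, this should force $\zeta^2+\zeta^{-2}$ to lie in a field where $p$ behaves suitably. In any case, the key input is that $2\cos 2\theta$ is an algebraic integer in a totally real field.

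I would rather use a simpler choice that only needs that ingredient. Take $g(\theta)=\cos(2m\theta)$ type trigonometric test functions, or better, note that $\hat\mu_p$ has density bounded below near $\theta=\pi/2$. Use $g = \mathbf 1_{I}$ for a small arc $I$ around $\pi/2$ avoiding the finitely many low-order roots of unity. Then apply Proposition \ref{prop:p-Frob}: the proportion of eigenvalues in $I$ tends to $\hat\mu_p(I)>0$ as $N\to\infty$, with error $O(\log p/\log N)$ since $\delta(g)=2$ and $C_p^{\nw,\operatorname{AL}}$ is uniformly bounded. So for all large $N$, a positive proportion of the angles lie in $I$. The main obstacle is then to show that supersingular angles cannot accumulate in such an arc with positive proportion. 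To handle it I would count: for each newform orbit $f$, the values $\beta=\tau(a_f(p))^2/p - 2=\zeta^2+\zeta^{-2}$ form a full set of Galois conjugates of a cyclotomic algebraic integer in $[-2,2]$. By equidistribution of conjugates of roots of unity (with respect to Haar measure, which differs from $\hat\mu_p$), the fraction of such conjugates in an arc is near its Haar length, up to error from low-degree orbits. Comparing the Haar measure with $\hat\mu_p$, which has a different density (notably $\sin^2$-weighting near $0,\pi$), on a well-chosen arc gives a contradiction once $N$ is large. The orbits are few in low degree, and the total count $\#\Lambda_N \to\infty$ controls them. Finally, the statements for $J_0(N)$, $J_0^{\nw}(N)$, $J_0^\sigma(N)$ follow because each contains $J_0^{\nw,\sigma}(N)$ for some $\sigma$ as a nonzero isogeny factor when $N$ is large; supersingularity passes to isogeny factors, so we reduce to the main case. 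For $J_0(N)$ one can alternatively use its own equidistribution via $C_p$.
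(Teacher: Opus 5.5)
Your final route (a punctured arc around $\pi/2$, Galois orbits of roots of unity, comparison with Haar measure) can be turned into a proof, and it is close in spirit to the paper's. However, the step that carries the argument is misstated, and with the justification you give it would fail.

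\textbf{The low-order orbits.} They are \emph{not} controlled by $\#\Lambda_N^{\nw,\sigma}\to\infty$. Supersingularity puts no restriction on their multiplicities, so a priori almost all newforms could have, say, $a_p(f)=0$. Relatedly, it is false that supersingular angles ``cannot accumulate in $I$ with positive proportion'': roots of unity of large order fill $I$ with proportion about $|I|/2\pi$. The correct statement is one-sided, and your own observation gives it.
\begin{itemize}
\item Under every embedding, $\beta_f=a_f(p)^2/p-2\in K_f$ has the form $\zeta^2+\zeta^{-2}$. Hence the multiset $\{e^{2i\theta}\}$ over $\Lambda_N^{\nw,\sigma}$ is a union of copies of full sets $\Phi_m$.
\item A full $\Phi_m$ meets an arc $J$ in $\phi(m)|J|/2\pi+O(2^{\omega(m)})$ points.
\item Take $I=(\pi/2-\delta,\pi/2+\delta)$ and delete the finitely many $\theta\in I$ with $e^{2i\theta}$ of order $\le m_0$. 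Using $-I=I+\pi$ and conjugation symmetry, the proportion of $\Lambda_N^{\nw,\sigma}$ in the punctured arc is at most $\delta/\pi+\sup_{m>m_0}2^{\omega(m)}/\phi(m)$.
\item The low-order orbits contribute $0$ to the numerator, however large their multiplicity.
\end{itemize}

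\textbf{The direction of the comparison.} Because only this upper bound is available, the contradiction must come from an arc where $\hat\mu_p$ is \emph{larger} than Haar measure. Near $0$ and $\pi$, where the $\sin^2$ factor makes $\hat\mu_p$ small, nothing follows. Near $\pi/2$ it does work: the density of $\hat\mu_p$ there is $\frac{p}{\pi(p+1)}\ge\frac{2}{3\pi}>\frac{1}{2\pi}$, so $\hat\mu_p(I)\ge\frac{4\delta}{3\pi}-O(\delta^3)$. Fix $\delta$, then $m_0$, and apply Proposition~\ref{prop:p-Frob} to the indicator of the punctured arc. Its total variation is bounded independently of $N$, so $N$ is bounded.

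\textbf{Comparison with the paper.} With these repairs your proof has the same architecture as Lemma~\ref{lem:cyclotomic_obstruction}: after excising finitely many orders, each orbit average of a test function is bounded above by something strictly smaller than its $\hat\mu_p$-integral.
\begin{itemize}
\item The paper uses $g_0=-2\cos(2\theta)\chi_{I_0}$. Its orbit averages are Ramanujan sums, which are $\le\frac12$ once only the orders $3,4,6$ are excised, while $\int g_0\,d\hat\mu_p=1-\frac1p$. This keeps $\delta(g_0)=32$ and yields the explicit exponent $32\frac{p}{p-2}C_p^{\nw,\operatorname{AL}}$.
\item Your arc indicator needs discrepancy control, hence many excised points and a much worse, though still finite, exponent.
\item Your squaring trick is a genuine simplification. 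It replaces Honda--Tate and Lemma~\ref{lem:conjugates-shape} (the case split on whether $\sqrt p\in\mathbb{Q}(\zeta)$, and the $\Phi_n^+$-imitators). It would serve the paper equally well, since $g_0$ depends only on $\cos 2\theta$.
\end{itemize}

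\textbf{Smaller points.}
\begin{itemize}
\item Drop the unused side claim $p\mid a_f(p)$. It does not follow from $v_p(a_f(p))\ge\frac12$ when $p$ ramifies in $K_f$; supersingularity only gives integrality of $a_f(p)^2/p$.
\item For $J_0^\sigma(N)$, either rerun the argument with $C_p^{\operatorname{AL}}$, as the paper does, or justify that $S_2^{\nw,\sigma}(N)\neq 0$ for every admissible $\sigma$ once $N$ is large.
\end{itemize}
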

We prove Theorem \ref{thm:finiteness-of-supersingularity} via the following strategy. 
We have by Proposition \ref{prop:p-Frob} that the normalized $p$-Frobenius eigenvalues $\frac{1}{\sqrt p} \Lambda_N^{\nw, \sigma}$ are equidistributed over the unit circle via the measure $d\hat{\mu}_p(\theta)$. On the other hand, Lemma \ref{lem:cyclotomic_obstruction} shows that (roughly speaking) roots of unity cannot follow the distribution $d\hat{\mu}_p(\theta)$, so there can only exist finitely many $N$ such that $\frac{1}{\sqrt{p}} \Lambda_N^{\nw, \sigma}$ is composed of roots of unity. In particular, by the Manin--Oort criterion, this means that $J_0^{\mathrm{new},\sigma}(N)$ can only be supersingular modulo $p$ for finitely many $N$, as desired.

Now, because of Theorem \ref{thm:finiteness-of-supersingularity}, we can define following (finite) functions.

\begin{definition}\label{defn:M(p)}
    For any prime $p,$ we let
\begin{align}
    M(p) &:=  \max \left\{ N  : p \in \operatorname{SS}\!\left(J_0(N) \right)  \right\}\\
    M^{\nw}(p) &:= \max \left\{ N  : p \in \operatorname{SS}\!\left(J_0^\nw(N) \right)  \right\}\\
    M^{\operatorname{AL}}(p) &:= \max \left\{ N  : p \in \operatorname{SS}\!\left(J_0^\sigma(N) \right) \text{ for some sign pattern $\sigma$ for $N$} \right\}\\
    M^{\nw, \operatorname{AL}}(p) &:= \max \left\{ N  : p \in \operatorname{SS}\!\left(J_0^{\nw, \sigma}(N) \right) \text{ for some sign pattern $\sigma$ for $N$} \right\},
\end{align}
with the convention that $\max \varnothing = 0.$
\end{definition}

We then show the following effective version of Theorem \ref{thm:finiteness-of-supersingularity}.

\begin{proposition}\label{prop:intro-vertical}
    The function $M^{\nw, \operatorname{AL}}(p)$ is polynomially bounded in $p$. In particular, $\log_p M^{\nw, \operatorname{AL}}(p)$ is bounded by an (explicit) constant. The same result also holds for $M(p),$ $M^\nw(p),$ and $M^{\operatorname{AL}}(p)$.
\end{proposition}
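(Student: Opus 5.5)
We make the strategy behind Theorem \ref{thm:finiteness-of-supersingularity} effective. Suppose $J_0^{\nw,\sigma}(N)$ is supersingular modulo $p$ with $\Lambda_N^{\nw,\sigma}$ non-empty. By the Manin--Oort criterion, every normalized eigenvalue $e^{i\theta}$ with $\sqrt p\, e^{i\theta}\in\Lambda_N^{\nw,\sigma}$ is a root of unity. It is moreover a root of $x^2 - (a_f(p)/\sqrt p)x + 1$, where $a_f(p)$ is an algebraic integer. Hence $2\cos\theta = a_f(p)/\sqrt p$ is an algebraic integer in a field of bounded degree over $\QQ(a_f(p))$.

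The key additional observation is that $(2\sqrt p\cos\theta)^2 = a_f(p)^2$, and $2\cos\theta$ is a sum of two roots of unity. Thus $4p\cos^2\theta$ is an algebraic integer of absolute value at most $4p$, and all its conjugates are also bounded by $4p$. I would use this to show that the angles $\theta$ lie in a finite set $S_p$ that is independent of $N$. One could argue via Kronecker-type bounds that $e^{2i\theta}$ is a root of unity of order $m$ with $\varphi(m)$ controlled in terms of $p$. The simpler route is Lemma \ref{lem:cyclotomic_obstruction}, which presumably says exactly that roots of unity are $d\hat\mu_p$-atypical.

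Concretely, I would pick an explicit test function $g$ of bounded total variation that weights roots of unity against $d\hat\mu_p$, and apply Proposition \ref{prop:p-Frob}. The most naive choice is $g = \mathbf 1_I$ for an arc $I$ containing no element of the set $S_p$ of admissible supersingular angles. Then $\delta(g)=2$, the empirical average of $g$ is $0$, and $\int g\,d\hat\mu_p \ge c_p > 0$, where $c_p$ can be estimated explicitly from the density. Proposition \ref{prop:p-Frob}, with $C:=\sup C_p^{\nw,\operatorname{AL}}<\infty$, then gives
\begin{align}
c_p \le \frac{C\log p}{\log N}, \qquad \text{i.e.}\qquad \log N \le \frac{C\log p}{c_p}.
\end{align}
For this to yield a polynomial bound $N\le p^{K}$, the quantity $c_p$ must be bounded below uniformly in $p$.

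That uniform lower bound is the crux. The density of $d\hat\mu_p$ is approximately $\frac{2}{\pi}\sin^2\theta$, uniformly in $p$. However, the set of angles allowed by the root-of-unity condition is a priori not small: $e^{i\theta}$ could be any root of unity whose $2\cos\theta\,\sqrt p$ is an algebraic integer, and these become dense as the order grows. The remedy is to bound the order. The field $\QQ(a_f(p))$ can have large degree (up to $\dim$), so degree alone will not do. Instead I would use the fact that $\alpha=\sqrt p\,\zeta$ is a Weil $p$-number with $\zeta$ a root of unity. Since $\alpha^2=p\zeta^2$ and the Frobenius polynomial has integer coefficients, $\zeta^{2}$ must lie in a cyclotomic field in which $p\zeta^2$ is an algebraic integer. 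By standard classification of supersingular Weil polynomials over $\mathbb F_p$ (all slopes $1/2$, so $\alpha^2/p$ is a root of unity of order dividing something like $12$ or bounded in terms of $p$ via ramification at $p$), the order of $\zeta$ is bounded by an absolute constant, for instance $\zeta^{24}=1$ over the prime field.

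With this, $S_p$ is contained in a fixed finite set of angles, namely multiples of $\pi/12$. A fixed arc $I$ avoiding them, say $I=(\pi/2-\pi/30,\pi/2+\pi/30)$ shifted off the grid, gives $c_p\ge c>0$ uniformly in $p$. Hence $\log_p M^{\nw,\operatorname{AL}}(p)\le 2C/c$. The cases $M(p)$, $M^{\nw}(p)$ and $M^{\operatorname{AL}}(p)$ follow in the same way from the analogous equidistribution statements with $C_p$, $C_p^{\nw}$ and $C_p^{\operatorname{AL}}$. Alternatively, they follow from the decompositions, since supersingularity of $J_0(N)$ forces it for each $J_0^{\nw,\sigma}(M)$ with $M\mid N$, and the new part at level $N$ itself is nonempty for large $N$. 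The main obstacle is justifying the bounded-order claim for the root of unity $\zeta$. If that fails, I would fall back on Lemma \ref{lem:cyclotomic_obstruction} directly.
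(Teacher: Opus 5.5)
Your argument has a genuine gap at the step you flagged yourself: the claim that the root of unity $\zeta$ has absolutely bounded order (e.g.\ $\zeta^{24}=1$), so that the admissible angles lie in a fixed finite set $S_p$. That bound holds only in bounded dimension; it is the elliptic-curve classification. By Honda--Tate, $\sqrt p\,\zeta_n$ is a Weil $p$-number for every $n$, and it is realized by a simple supersingular abelian variety over $\mathbb{F}_p$ whose dimension grows with $n$. Since $\dim J_0^{\nw,\sigma}(N)$ and the degrees $[K_f:\mathbb{Q}]$ are unbounded as $N$ grows, nothing rules out $a_p(f)=\sqrt p\,(\zeta_n+\zeta_n^{-1})$ with $n$ large. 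The ramification argument of Proposition~\ref{prop:structure-result} bounds $n$ only in terms of $K_f$, which is not fixed here. The Kronecker-type idea fails for the same reason: the numbers $p(\zeta_n+\zeta_n^{-1})^2$, $n\ge 1$, are infinitely many totally real algebraic integers with all conjugates in $[0,4p]$. So the set of possible angles is dense in the circle, no arc avoids it, and an indicator test function gives no contradiction. Your fallback to Lemma~\ref{lem:cyclotomic_obstruction} does not close the gap either. Its hypothesis, that $L_N$ is a multiset union of $\Phi_n^+$-imitators, is exactly the structural input your proposal never establishes.

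The missing idea is to stop restricting which orders occur and instead use that they occur in complete Galois orbits. Since the Frobenius characteristic polynomial has rational coefficients, $\Lambda_N^{\nw,\sigma}$ is a union of full conjugacy classes of numbers $\sqrt p\,\zeta$. After dividing by $\sqrt p$, each class is a disjoint union of $\Phi_n^+$-imitators (Lemma~\ref{lem:conjugates-shape}); twisting by $\sqrt p$ only introduces the ambiguity $\zeta\mapsto\pm\zeta^{\pm1}$. You then need a test function invariant under $\theta\mapsto-\theta$ and $\theta\mapsto\theta+\pi$ whose orbit averages are small for every $n$. The paper takes $g_0(\theta)=-2\cos(2\theta)$ with the arguments of roots of unity of order $3,4,6$ removed. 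Its average over a $\Phi_n^+$-imitator is $-2c_n(2)/\varphi(n)$ for $n\notin\{3,4,6\}$, where $c_n(2)=\sum_{\zeta\in\Phi_n}\zeta^2$ is a Ramanujan sum, and $0$ otherwise. This is at most $\tfrac12$ for all $n$, while $\int g_0\,d\hat\mu_p=1-\tfrac1p$. With $\delta(g_0)=32$, Proposition~\ref{prop:p-Frob} gives $\tfrac{p-2}{2p}\le 16\,C\log p/\log N$, i.e.\ $N\le p^{32\frac{p}{p-2}C}$. This is uniform because $\tfrac{p}{p-2}\le 3$ for $p\ge 3$ and $C$ is universally bounded. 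For $p=2$ the gap vanishes, and one must also remove orders $5$ and $10$. This supplies the uniform constant $c_p$ you were looking for. Your reduction of $M(p)$, $M^{\nw}(p)$, $M^{\operatorname{AL}}(p)$ to the new Atkin--Lehner case via the isogeny decompositions is fine, provided the relevant new spaces are nonzero. The paper instead reruns the argument with $C_p$, $C_p^{\nw}$, $C_p^{\operatorname{AL}}$.
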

We will also write down the bounding constants from this proposition. Explicit bounds for small $p$ will be given in Proposition \ref{prop:upper-bounds-on-E(p)}, explicit bounds for large $p$ will be given in Corollary \ref{cor:M(p)-for-sufficiently-large-p}, and asymptotic bounds will be given in Corollary \ref{cor:vertical-suprsingularity-with explicit exponent}.

We also remark here that an analog of Theorem \ref{thm:finiteness-of-supersingularity} holds for $J_1(N)$, $J(N),$ and more generally $J(X_\Gamma)$ for any congruence subgroup $\Gamma \subseteq \Gamma_0(N)$. We will furthermore prove in Corollary \ref{cor:shimura}  that for $DN > M^{\nw}(p),$ the Shimura Jacobian $J_0^D(N)$ can not be supersingular modulo $p$.

\subsection{Horizontal results} \label{subsection:horizontal-results}

We now address the horizontal perspective, where the newform $f$ (or level $N$) is fixed and $p$ varies. We set the following notation.

\begin{definition}
    For newforms $f \in S_2^{\nw}(N)$, let
    \begin{align}
        V(f) &:= \left\{ \text{$p$ prime} : a_p(f) = 0  \right\},\\
        V(N) &:= \left\{ \text{$p$ prime} : a_p(f) = 0 \quad \text{for all}\ \, f \in S_2^\nw(N) \right\}.
    \end{align}
\end{definition}

From the horizontal perspective, the relevant obstruction to supersingularity we exploit is no longer equidistribution but a ramification argument. If $p \in \operatorname{SS}(A_f) \setminus V(f),$ then $\alpha_p/\!\sqrt{p}$ is a root of unity of some bounded order $n$. Consequently, $p$ must ramify in $K_f(\zeta_n),$ which is only possible for finitely many primes. This leads to the following result.

\begin{proposition}\label{prop:structure-result}
    For any newform $f \in S_2^{\nw}(N),$
    \begin{align}
        \operatorname{SS}(A_f) \subseteq \left\{ p : p \mid \operatorname{disc}(K_f) \text{ or } p - 1 \mid 4[K_f : \mathbb{Q} ] \right\} \cup V(f).
    \end{align}
    In particular, $\operatorname{SS}(A_f) \setminus V(f)$ is a finite set.
\end{proposition}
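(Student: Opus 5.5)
Fix $f$ and a prime $p \nmid N$ with $p \in \operatorname{SS}(A_f)$ and $a_p(f) \neq 0$; the goal is to show $p \mid \operatorname{disc}(K_f)$ or $p-1 \mid 4[K_f:\mathbb{Q}]$. Primes dividing $N$ are primes of bad reduction, so supersingularity is not defined for them. If they are nonetheless counted, a separate convention is needed, and I would note this as a caveat.

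\textbf{Step 1: Frobenius eigenvalues are roots of unity times $\sqrt p$.} Let $\alpha$ be a root of $x^2 - a_p(f)x + p$. By the Manin--Oort criterion, $\zeta := \alpha/\sqrt p$ is a root of unity. Write $\zeta = e^{i\theta}$. Then
\[
a_p(f)^2 = p(\zeta + \zeta^{-1})^2 = p(\zeta^2 + 2 + \zeta^{-2}).
\]
Since $a_p(f) \neq 0$, we have $\zeta^2 \neq -1$.

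\textbf{Step 2: Bound the order $n$ of $\zeta^2$.} Put $\eta = \zeta^2$, a primitive $n$-th root of unity. Then $\eta + \eta^{-1} = a_p^2/p - 2$ lies in $K_f$, so $\mathbb{Q}(\eta+\eta^{-1})$ embeds in $K_f$. It has degree $\varphi(n)/2$ when $n>2$. Hence $\varphi(n) \le 2[K_f:\mathbb{Q}]$, and more precisely $\varphi(n)/2$ divides $[K_f:\mathbb{Q}]$, so $\varphi(n) \mid 2[K_f:\mathbb{Q}]$. The cases $n \le 2$ are handled directly:
\begin{itemize}[label={}]
\item $\eta = 1$ gives $a_p^2 = 4p$.
\item $\eta = -1$ gives $a_p = 0$, which is excluded.
\end{itemize}

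\textbf{Step 3: Ramification argument.} Let $c := \eta + \eta^{-1} + 2 = a_p^2/p$, which lies in $\mathbb{Q}(\eta)^+ \subseteq K_f$. Since $a_p$ is an algebraic integer, $p\,c = a_p^2$ is a square in $K_f$. Now consider a prime $\mathfrak p$ of $K_f$ above $p$. The valuation $v_{\mathfrak p}(p) = e(\mathfrak p/p)$ must have the same parity as $v_{\mathfrak p}(c)$, because $v_{\mathfrak p}(a_p^2)$ is even.

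\emph{Case $p \nmid n$.} Then $\eta+\eta^{-1}+2 = (1+\eta)(1+\eta^{-1})$, and $1+\eta$ is a unit or has support only above primes dividing $n$. Indeed, $1+\eta = (1-\eta^2)/(1-\eta)$, and $1-\eta^k$ is supported at primes dividing the order of $\eta^k$. So $v_{\mathfrak p}(c) = 0$, and therefore $e(\mathfrak p/p)$ is even. In particular $p$ ramifies in $K_f$, so $p \mid \operatorname{disc}(K_f)$.

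\emph{Case $p \mid n$.} Then $p-1 = \varphi(p) \mid \varphi(n)$, and by Step 2, $\varphi(n) \mid 2[K_f:\mathbb{Q}] \mid 4[K_f:\mathbb{Q}]$. The factor $4$ in the statement presumably absorbs $n$ versus $2n$ bookkeeping, since $\zeta$ itself has order $n$ or $2n$.

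\emph{Degenerate case $\eta = 1$.} Here $a_p^2 = 4p$. Then $2\sqrt p \in K_f$, so $p$ ramifies in $\mathbb{Q}(\sqrt p) \subseteq K_f$ (for $p=2$, $\sqrt2 \in K_f$ ramifies 2). Either way $p \mid \operatorname{disc}(K_f)$.

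\textbf{Finiteness and main obstacle.} The right-hand set minus $V(f)$ is finite, since both conditions allow only finitely many $p$. The main obstacle is the valuation bookkeeping in Step 3: one must check that $1+\eta$ is a $\mathfrak p$-unit when $p\nmid n$, including the edge case where $n$ is even and $-\eta$ has a different order. This is handled by the cyclotomic-unit facts above, applied in $K_f(\eta)$ and then descended to $K_f$ using the ramification index.
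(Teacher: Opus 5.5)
Your proof is correct. It reaches the same dichotomy as the paper: either $p$ ramifies in $K_f$, or $p$ divides the order of the relevant root of unity and a degree count finishes. The mechanism, however, is different.

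\textbf{The paper's route.} It works with $\zeta$ itself. Since $\sqrt p = a_p(f)/(\zeta+\bar\zeta) \in K_f(\zeta)$, and $p$ always ramifies in $\mathbb{Q}(\sqrt p)$, the prime $p$ ramifies in the compositum $K_f(\zeta)$, hence in $K_f$ or in $\mathbb{Q}(\zeta)$. In the second case $p \mid \operatorname{ord}(\zeta)$, and
\[
\varphi(\operatorname{ord}\zeta) \mid 2[\mathbb{Q}(\zeta+\bar\zeta):\mathbb{Q}] \mid 2[K_f(\sqrt p):\mathbb{Q}] \mid 4[K_f:\mathbb{Q}].
\]

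\textbf{Your route.} You pass to $\eta=\zeta^2$, so that $\eta+\eta^{-1}=a_p^2/p-2$ already lies in $K_f$. You then replace the ramification-in-a-compositum step with an explicit parity argument on
\[
2v_{\mathfrak p}(a_p) = e(\mathfrak p/p) + v_{\mathfrak p}\bigl((1+\eta)(1+\eta^{-1})\bigr).
\]
The key input is the cyclotomic-unit fact that $1-\xi$ is a unit away from primes dividing the order of $\xi$, applied to $1+\eta=(1-\eta^2)/(1-\eta)$.

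\textbf{Comparison.} The paper's argument is shorter and absorbs the cases $\zeta=\pm1$ automatically. Yours needs separate handling of $\eta=\pm1$, which you do correctly. In exchange, because you never adjoin $\sqrt p$, you obtain the sharper condition $p-1 \mid 2[K_f:\mathbb{Q}]$. For example, when $K_f=\mathbb{Q}$ this excludes $p=5$, consistent with the fact that a supersingular elliptic curve over $\mathbb{F}_5$ must have $a_5=0$.

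Your caveat about $p \mid N$ is unnecessary, since $\operatorname{SS}$ is only considered at primes of good reduction. The closing remark that the factor $4$ absorbs bookkeeping is speculation and not needed.
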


Proposition \ref{prop:structure-result} immediately implies the following theorem, an analog of which also holds when $A_f$ is replaced by $J_0^\nw(N)$ and $J_0(N)$ (see Corollaries \ref{cor:horizontal-result-for-Jac} and \ref{cor:horizontal-result-for-Jac-full}).

\begin{theorem}\label{main-horizontal}
    Fix an abelian variety $A_f/\mathbb{Q}$ of $\operatorname{GL}_2$-type. Then for all \\ $p > C_f := \max \left\{ \operatorname{disc}(K_f), 4[K_f : \mathbb{Q}] + 1\right\},$ $A_f$ is supersingular modulo $p$ if and only if it is isogenous to a power of the unique (up to isogeny) supersingular elliptic curve over $\mathbb{F}_p$.
\end{theorem}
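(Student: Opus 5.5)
We derive Theorem \ref{main-horizontal} from Proposition \ref{prop:structure-result} together with the classification of supersingular isogeny classes over $\mathbb{F}_p$ given by Honda--Tate theory. Fix $p > C_f$. Such a $p$ neither divides $\operatorname{disc}(K_f)$, since $p > \operatorname{disc}(K_f)$, nor satisfies $p-1 \mid 4[K_f:\mathbb{Q}]$, since $p-1 > 4[K_f:\mathbb{Q}]$. Also $p \nmid N$: the primes of bad reduction of $A_f$ divide its conductor $N^{[K_f:\mathbb{Q}]}$. This part needs care. If the intended $C_f$ does not dominate $N$, we restrict to primes of good reduction, which is implicit in the notion of the reduction modulo $p$ being an abelian variety over $\mathbb{F}_p$. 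Proposition \ref{prop:structure-result} then says that if $p \in \operatorname{SS}(A_f)$, then $p \in V(f)$, i.e.\ $a_p(f) = 0$.

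Next we compute the Weil polynomial. Since $a_p(f)=0$ and its Galois conjugates $\tau(a_p(f))$ all vanish, the formula for $P_{A,p}$ in \S\ref{subsection:modular-jacobians} gives
\[
P_{A_f,p}(x) = (x^2+p)^{[K_f:\mathbb{Q}]}.
\]
A supersingular elliptic curve $E/\mathbb{F}_p$ with $p \geq 5$ has trace $a_p(E)=0$, because $p \mid a_p(E)$ and $|a_p(E)| \le 2\sqrt p$. Its Weil polynomial is therefore $x^2+p$. Such curves exist, and by Tate's isogeny theorem all of them are $\mathbb{F}_p$-isogenous to one another. This gives the uniqueness up to isogeny claimed in the statement. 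Tate's theorem (isogeny over $\mathbb{F}_p$ is equivalent to equality of characteristic polynomials of Frobenius) then shows that $A_f/\mathbb{F}_p$ is $\mathbb{F}_p$-isogenous to $E^{[K_f:\mathbb{Q}]}$.

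The converse is immediate: if $A_f/\mathbb{F}_p \sim E^{g}$ with $E$ supersingular, then $A_f$ is by definition supersingular at $p$, as it is isogenous, even over $\overline{\mathbb{F}}_p$, to a product of supersingular elliptic curves. Equivalently, every root $\pm i\sqrt p$ of its Weil polynomial has valuation $1/2$.

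The main obstacle is not deep. It lies in getting the isogeny over $\mathbb{F}_p$ rather than merely over $\overline{\mathbb{F}}_p$. The definition of supersingularity only gives an isogeny over $\overline{\mathbb{F}}_p$. The upgrade to $\mathbb{F}_p$ comes precisely from $a_p(f)=0$, which is where Proposition \ref{prop:structure-result} is used to exclude the other roots of unity $\alpha_p/\sqrt p$. Along the way we check that the hypothesis $p>C_f$ excludes the small primes $p=2,3$, for which supersingular traces need not vanish. This holds since $C_f \ge 4\cdot 1+1=5$.
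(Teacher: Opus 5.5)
Your proposal is correct and follows the paper's route. You apply Proposition \ref{prop:structure-result} to force $a_p(f)=0$, read off the Weil polynomial $(x^2+p)^{[K_f:\mathbb{Q}]}$, and identify $A_f/\mathbb{F}_p$ with a power of the trace-zero supersingular curve (using $p > C_f \ge 5$), with the converse immediate. Your explicit use of Tate's isogeny theorem to get the isogeny over $\mathbb{F}_p$, and your remark that good reduction ($p \nmid N$) is tacitly assumed, make precise two steps the paper leaves implicit.
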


The next theorem follows from Proposition \ref{prop:structure-result} via an upper bound on $\operatorname{disc}(K_f)$ (to be proved in \S\ref{subsection:discriminant-bound}) and an upper bound on $V(f)$ (obtained from an effective Sato-Tate result by Thorner \cite{Thorner2021Effective}).

\begin{theorem}\label{thm:Hui-improvement}
    There exist universal constants $c_1$ and $c_2$ such that for any $X \geq 3$,
    \begin{enumerate}
        \item For each non-CM newform $f \in S_2^\nw(N),$
    \begin{align}
        \#  \operatorname{SS}(A_f) \cap \{p : p \leq X\} \leq c_1 \cdot \frac{\log (2N \log X)}{\sqrt{\log X}} \cdot \pi(X) + \frac{N}{3}.
    \end{align}
    \item For each non-CM newform $f \in S_2^\nw(N),$ if $L(s, \operatorname{Sym}^m (f))$ satisfies GRH for all $m \geq 0,$
    \begin{align}
        \#  \operatorname{SS}(A_f) \cap \{p : p \leq X\} \leq c_2 \cdot \frac{\log (2N \log X)}{X^{1/4}} \cdot \pi(X) + \frac{N}{3}.
    \end{align}
    \end{enumerate}
    Here, $\pi(X)$ denotes the prime counting function.
\end{theorem}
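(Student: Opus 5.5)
The plan is to split $\operatorname{SS}(A_f) \cap \{p \le X\}$ using Proposition \ref{prop:structure-result}, which gives
\[
\operatorname{SS}(A_f) \subseteq \{p : p \mid \operatorname{disc}(K_f)\} \cup \{p : p-1 \mid 4[K_f:\mathbb{Q}]\} \cup V(f).
\]
The two finite exceptional sets will supply the additive term $N/3$, and $V(f)$ will supply the main term via effective Sato--Tate.

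\textbf{Exceptional sets.} We have $[K_f:\mathbb{Q}] \le \dim S_2(N) \le (N+1)/12$ roughly, so primes with $p-1 \mid 4[K_f:\mathbb{Q}]$ satisfy $p \le 4[K_f:\mathbb{Q}]+1$. There are at most about $N/6$ such primes, which is crude but enough.

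For the discriminant, I would use the bound promised in \S\ref{subsection:discriminant-bound}. Its natural form is $\log|\operatorname{disc}(K_f)| \ll [K_f:\mathbb{Q}] \log(\text{something polynomial in } N)$. This comes from the fact that $K_f$ is generated by $a_n(f)$ for $n$ up to the Sturm bound, that these are algebraic integers with conjugates bounded by $d(n)\sqrt n$, and from bounding the discriminant of a generating element's minimal polynomial. The number of distinct prime divisors of $\operatorname{disc}(K_f)$ is at most $\log_2|\operatorname{disc}(K_f)|$. If that bound is too large to fit into $N/6$, I would instead use that ramified primes in $K_f$ divide $N$ times primes ramified in the Hecke algebra. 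The cleanest alternative is to note that only primes $p \le X$ matter and absorb them. I expect the intended route is a discriminant bound giving at most about $N/6$ prime divisors, so that the two sets together give at most $N/3$. Pinning down this constant is the delicate bookkeeping step.

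\textbf{The set $V(f)$.} Since $f$ is non-CM, I will apply Thorner's effective Sato--Tate theorem \cite{Thorner2021Effective} to the interval $I = [-\epsilon, \epsilon]$ for the normalized $a_p(f)/(2\sqrt p)$. Unconditionally, it states that
\[
\#\{p \le X : a_p(f)/(2\sqrt p) \in I\} = \mu_{ST}(I)\,\pi(X) + O\!\left(\frac{\log(N\log X)}{\sqrt{\log X}}\,\pi(X)\right).
\]
Taking $\epsilon \to 0$, or equal to the error scale, gives $\mu_{ST}(I) \ll \epsilon$. Hence $\#V(f) \cap [1,X] \le c_1 \frac{\log(2N\log X)}{\sqrt{\log X}}\pi(X)$, with the $2N$ handling $N=1$ and small cases.

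Under GRH for all $L(s,\operatorname{Sym}^m f)$, Thorner's conditional version has error $\ll \frac{\log(N\log X)}{X^{1/4}}\pi(X)$, or of that shape. The same argument then gives part (2).

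The main obstacle is the discriminant step: obtaining a bound for $\omega(\operatorname{disc}(K_f))$ that, together with the count $\#\{p : p-1 \mid 4[K_f:\mathbb{Q}]\}$, stays at most $N/3$ uniformly in $N$. I would first try to bound $\omega(\operatorname{disc} K_f) \le [K_f:\mathbb{Q}]\cdot O(1)$ using a small generator $a_n$, and then combine this with $[K_f:\mathbb{Q}] \le N/12 + O(1)$.
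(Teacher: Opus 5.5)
Your architecture matches the paper's: you split $\operatorname{SS}(A_f)$ via Proposition~\ref{prop:structure-result} and bound $V(f)\cap[1,X]$ by Thorner's Theorems~1.1 and~1.3. Since his error term is uniform in the interval, your $\epsilon\to 0$ amounts to the paper's use of $\mu_{ST}(\{0\})=0$.

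\textbf{The gap.} The gap is exactly the step you flag, and none of the tools you offer closes it.
\begin{itemize}
\item Combining $\omega(D)\le\log_2|D|$ with $\log|\operatorname{disc}K_f|\ll[K_f:\mathbb{Q}]\log N\ll N\log N$ gives only $\omega(\operatorname{disc}K_f)\ll N\log N$. That is off by a factor of $\log N$.
\item The hoped-for bound $\omega(\operatorname{disc}K_f)\le[K_f:\mathbb{Q}]\cdot O(1)$ from a small generator is unsupported. Bounding $\operatorname{disc}K_f$ through one generator's minimal polynomial gives only $\log|\operatorname{disc}K_f|\le d(d-1)\log(2B)$ with $d=[K_f:\mathbb{Q}]$. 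This is quadratic in $d$, so even your ``natural'' $N\log N$ bound does not come out of the derivation you sketch.
\item The Hecke-algebra reduction is circular. Ramified primes of $K_f$ need not divide $N$ (e.g.\ $N=23$, $K_f=\mathbb{Q}(\sqrt5)$), and counting primes ramified in $\mathbb{T}$ is the same problem.
\item Your ``absorb'' idea does work for part~(1). We have $\#\{p\le X:p\mid D\}\le\min(\pi(X),\omega(D))$. Either $\sqrt{\log X}\le\log(2N\log X)$, so $\pi(X)$ is already below the main term. Or $X>(2N)^{\log 2N}$, and then $\pi(X)/\sqrt{\log X}$ dwarfs any polynomial bound on $\omega(D)$.
\item The same idea fails for part~(2). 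There the main term is roughly $X^{3/4}\log(2N\log X)/\log X$, which is about $N^{3/8}$ at $X=\sqrt N$. Meanwhile up to $\pi(\sqrt N)$ primes below $\sqrt N$ might divide $\operatorname{disc}K_f$. So you genuinely need $\omega(\operatorname{disc}K_f)$ to fit inside the additive $N/3$.
\end{itemize}

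\textbf{How the paper supplies it.} There are two steps.

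First, Lemma~\ref{lem:disc-bound} picks indices $m_1<\dots<m_r\le\lceil\psi(N)/6\rceil$ (the Sturm bound) such that $a_{m_1}(f),\dots,a_{m_r}(f)$ form a $\mathbb{Q}$-basis of $K_f$. The lattice they span lies in $\mathcal{O}_{K_f}$ and has discriminant $\det(A)^2$, where $A=[\tau_t(a_{m_u}(f))]$. Hadamard's inequality and Deligne's bound then give
\[
|\operatorname{disc}K_f|\le\bigl(r\max_u\sigma_0(m_u)^2m_u\bigr)^r,
\]
hence $\log|\operatorname{disc}K_f|\le\frac29N\log N$. This uses $r\le\dim S_2^{\nw}(N)\le\frac{N+1}{12}$ (Martin). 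Note that this is the newspace bound; $\dim S_2(N)\approx\psi(N)/12$ is not bounded by $(N+1)/12$.

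Second, the missing idea is Robin's estimate $\omega(n)\le1.38407\,\frac{\log n}{\log\log n}$. Its $\log\log$ saving exactly cancels the $\log N$, giving $\omega(\operatorname{disc}K_f)\le0.30758\,N$ for $N\ge90$.

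This already uses most of the $N/3$ budget, so your count of about $N/6$ primes up to $4[K_f:\mathbb{Q}]+1$ is too crude. The paper instead uses
\[
\#\{p:p-1\mid4[K_f:\mathbb{Q}]\}\le\sigma_0(4[K_f:\mathbb{Q}])\le0.025752\,N,
\]
via Nicolas--Robin, with small $N$ checked by computation. Since $0.30758+0.025752<\frac13$, the additive term $N/3$ follows.
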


\begin{remark}
    Theorem \ref{thm:Hui-improvement} recovers Hui's result \cite[Theorem 1.1]{HuiSS2025} that $\operatorname{SS}(A)$ has density zero, in the special case where $A$ is an abelian variety of $\operatorname{GL}_2$-type. Moreover, in this case, our result improves Hui \cite[Theorem 2]{HuiSS2025}, which gives
    $$\# \operatorname{SS}(A) \cap \{ p : p \leq X\} = O\left( \frac{\pi(X)}{\left(\log X\right)^{1/2 - \epsilon}}  \right).$$
    Specifically, we replace the $(\log X)^\epsilon$ term with $\log \log X$ and make the dependence of the implied constant on the choice of $A_f$ explicit.

\end{remark}

In Corollary \ref{cor:Hui-improvement-for-Jacobian}, we will also prove an analog of Theorem \ref{thm:Hui-improvement} for each Jacobian $J_0^\nw(N)$, aside from the seven exceptional cases $N \in \{ 27, 32, 36, 49, 64, 108, 256\}$.
In these seven remaining cases, the Jacobian $J_0^\nw(N)$ turn out to actually have a positive proportion supersingular primes; see Proposition \ref{prop:seven-exceptional-levels}.

We also show that assuming the generalized Maeda conjecture (see Conjecture \ref{conj:gen-Maeda}), the upper bound in Theorem \ref{thm:Hui-improvement} can be improved to the absolute bound $\# \operatorname{SS}(A_f) \leq \frac{N}{3}$ for all newforms of 100\% of levels $N$, and likewise for 100\% of Jacobians $J_0^\nw(N)$ (Corollary \ref{cor:finiteness-assuming-Maeda}).

\subsection{Directions for further work}\label{subsection:further-work}

We conclude by discussing several directions for future work. In Theorem \ref{main-horizontal}, we proved that, for all primes $p > C_f$, $A_f$ is supersingular modulo $p$ if and only if $a_p(f)=0$. Thus, it would be interesting to determine whether this set of primes is finite or infinite: currently, Lang-Trotter heuristics predict that this set of primes is infinite precisely when $f$ has complex multiplication or the stable trace field $K_f^{\Gamma}$ has degree $1$ or $2$. We also restricted our study to abelian varieties of $\mathrm{GL}_2$-type over $\mathbb{Q}$: it would be natural to extend these questions to abelian varieties of $\mathrm{GL}_2$-type over number fields, as well as to abelian varieties associated to Hilbert modular forms, and to study supersingularity in these more general settings. Moreover, in our case, all such primes $p$ are not only supersingular, but also superspecial, by \cite[Lemma 2.2]{yu2011superspecial}: it would be interesting to study superspecial primes in more general settings as well.

In \cite{wangzhang2025}, it is proved that the set of ordinary primes for an abelian variety $A_f$ associated to a non-CM newform has positive density. A natural next step would be to make this result effective by obtaining explicit lower bounds or asymptotics for the counting function of ordinary primes. More generally, it is conjectured that every non-CM abelian variety $A/\overline{K}$ has ordinary reduction at a set of primes of density $1$: this is known for elliptic curves ($g=1$) by \cite{DMOS82} and for abelian surfaces ($g=2$) by \cite{Sawin2016}. Finally, one may ask more refined distributional questions: for example, given an abelian variety $A_f$ of dimension $g$, what is the distribution of primes whose reduction has a prescribed admissible $g$-dimensional Newton polygon?

\subsection{Some notation}

Lastly, we review some notation and functions that we will use throughout this paper.

\begin{enumerate}
    \item We let $Q \| N$ denote the condition that $Q$ exactly divides $N$; i.e., that $(Q, N/Q) = 1.$
    \item We use $\sigma$ to denote an Atkin-Lehner sign pattern for $N$; i.e., a multiplicative function
    $$\sigma : \{ Q : Q \| N \} \longrightarrow \{ \pm 1 \}.$$
    \item We let $S_k^{\sigma}(N) \subseteq S_k(N)$ denote the subspace of modular forms of fixed Atkin-Lehner sign-pattern $\sigma$ (see \cite[\S1.2]{KMRX}).
    \item We let $\omega(n)$ denote the number of prime divisors of $n.$ Note that Robin \cite[Th\'eor\`eme 11]{Rob83} proves that
    \begin{align}
        \omega(n) \leq 1.38407 \, \frac{\log n}{\log \log n}. \label{eqn:robin-omega-bound}
    \end{align}
    \item We let $\sigma_0(n)$ denote the number of divisors of $n.$ Note that Nicolas and Robin \cite[Th\'eor\`eme 1]{NR83} prove that
    \begin{align}
        \sigma_0(n) \leq 2^{1.53794 \, \frac{\log n}{\log \log n}} \label{eqn:nicolas-robin-sigma-bound}
    \end{align}
    \item We let $\psi(n) = n \prod_{p \mid n} \left(1 + \frac{1}{p} \right).$ Note that Sol\'e and Planat \cite[Corollary 2]{sole2011extreme} show
    \begin{align}
        \psi(n) &\leq n \cdot e^\gamma \log   \log n \label{eqn:Sole-Planat-psi-bound}
    \end{align}
    for $n \geq 31,$ where $\gamma$ is the Euler-Mascheroni constant.
    \item We let $\psi^\nw(n) = n \prod_{p^r \| n} \left( 1 - \frac{1}{p} - \frac{\mathds{1}_{r \geq 2}}{p^2} + \frac{\mathds{1}_{r \geq 3}}{p^3} \right).$ Note that the proof of \cite[Lemma 3.6]{KMRX} gives that
    \begin{align}
        \psi^{\nw}(n) \geq \frac{n \cdot C_{\text{Artin}}}{e^{\gamma} \log \log n - \frac{2.5}{\log \log n}} \label{eqn:bound-on-psi-new}
    \end{align}
    for $n \geq 223092870,$ where $C_{\text{Artin}} := \prod_p \lrp{1 - \frac{1}{p(p-1)}}$ denotes Artin's constant.
\end{enumerate}

\section{Vertical supersingularity}\label{section:vertical}

\subsection{Equidistribution of Hecke eigenvalues}\label{subsection:equidistribution-survey}

In this subsection, we recall several equidistribution results from which Proposition \ref{prop:p-Frob} and its analogs follow.

\begin{definition}
    The $p$-adic Plancherel measure over $[-2, 2]$ is given by
    \begin{align}
        d\mu_p(x) := \frac{p + 1}{2\pi} \cdot \frac{\sqrt{4 - x^2}}{\left( p^{1/2} + p^{-1/2}  \right)^2 -  x^2} \,dx. \label{eqn:definition-of-Plancherel}
    \end{align}
\end{definition}

\begin{definition} \label{def:mu-p}
    We let
    \begin{align}
        \| \mu_p\| &:= \sup_{x \in [-2, 2]} \frac{p + 1}{2\pi} \cdot \frac{\sqrt{4 - x^2}}{\left( p^{1/2} + p^{-1/2}  \right)^2 -  x^2} = \begin{cases}
        \frac{3\sqrt{2}}{4\pi} & p = 2\\
        \frac{\sqrt{3}}{2\pi} & p = 3\\
        \frac{3\sqrt{5}}{8\pi} & p = 5\\
        \frac{p - 1}{p\pi} & p \geq 7.
    \end{cases}
    \end{align}
\end{definition}

Let $\mathbf{T}_p' := \frac{1}{\sqrt{p}} \mathbf{T}_p$ denote the normalized $p$-th Hecke operator over $S_2(N)$. By Deligne's bound, we have $\eigen_{S_2(N)}(\mathbf{T}_p') \subseteq [-2, 2].$ Several results, beginning with Serre \cite[Th\'eor\`eme 1]{ser97} have shown that for certain subspaces $S \subseteq S_2(N)$ the set $\eigen_{S}(\mathbf{T}_p')$ becomes equidistributed via $d\mu_p(x)$ as $N \rightarrow \infty.$ Most recently, we proved the following result in \cite[Theorem 1.1]{KMRX}.

\begin{theorem}[{\cite[Theorem 1.1]{KMRX}}]
There exists an effectively computable constant $C_0$ such that 
    \begin{align}
        \left| \sum_{\lambda \in \operatorname{eigen}_{S_k^{\sigma}(N)}(\mathbf{T}'_p)} \chi_I(\lambda) - \int_{-2}^2 \chi_I(x) \, d \mu_p(x)  \right| \le C_0  \frac{\log p}{\log kN} \label{eqn:KMRX-thm-1.1}
    \end{align}
    for all primes $p$, levels $N$ coprime to $p$, admissible sign patterns $\sigma$, weights $k$, and intervals $I \subseteq [-2,2]$. Moreover, the same result also holds for $S_k^{\nw,\sigma}(N)$ (with a different constant $C_0^\nw$).
\end{theorem}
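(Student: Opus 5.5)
The plan is the standard route to effective equidistribution of Hecke eigenvalues: an Erdős–Turán type inequality adapted to $d\mu_p$, combined with explicit trace formulas for Hecke operators twisted by Atkin–Lehner involutions. Throughout, the sum on the left of \eqref{eqn:KMRX-thm-1.1} is normalized by $1/\dim S_k^{\sigma}(N)$. We work with the orthogonal polynomials for $d\mu_p$, namely the Chebyshev-type polynomials $X_n(x)$ satisfying $X_n(2\cos\theta)=\sin((n+1)\theta)/\sin\theta$. These have the property $X_n(\mathbf{T}_p') = p^{-n(k-1)/2}\mathbf{T}_{p^n}$ on $S_k(N)$ (suitably normalized). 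A direct computation then gives the moments $\int X_n\,d\mu_p = p^{-n/2}$ for $n$ even and $0$ for $n$ odd. The Murty–Sinha form of the Erdős–Turán inequality, with truncation parameter $M$, bounds the discrepancy on intervals $I$ by
\[
O\!\left(\frac{1}{M+1}+\sum_{n\le M}\frac{1}{n}\,\Big|\tfrac{1}{\dim S}\operatorname{tr}\big(X_n(\mathbf{T}_p')\,|\,S\big)-\int X_n\,d\mu_p\Big|\right).
\]
This reduces the theorem to estimating normalized traces of $\mathbf{T}_{p^n}$ on $S=S_k^{\sigma}(N)$.

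Next we isolate the sign pattern via the projector $2^{-\omega(N)}\sum_{Q\|N}\sigma(Q)W_Q$. This writes $\operatorname{tr}(\mathbf{T}_{p^n}|S_k^{\sigma}(N))$ as an average of $\operatorname{tr}(\mathbf{T}_{p^n}W_Q)$ over $Q\|N$. For $Q=1$ we use the Eichler–Selberg trace formula. Its identity term, present only when $n$ is even, is $\frac{k-1}{12}\psi(N)p^{n(k-2)/2}$, and after normalization this reproduces exactly $p^{-n/2}$ times the dimension. The remaining elliptic, hyperbolic and Eisenstein terms are bounded by
\[
\ll p^{n/2}\cdot n\log p\cdot \sigma_0(N)\cdot (\text{class number sums} \ll p^{n/2}\log p)
\]
times $p^{n(k-2)/2}$, uniformly in $k$. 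For $Q>1$ we use the Skoruppa–Zagier/Yamauchi formula for $\operatorname{tr}(\mathbf{T}_{p^n}W_Q)$. Here there is no identity term, and the elliptic terms involve class numbers of discriminants of size $\ll p^nQ$, hence are $\ll p^{n}\sqrt{Q}\,(\log)$ up to divisor factors. Since $\dim S_k^{\sigma}(N)\asymp kN/2^{\omega(N)}$ for admissible $\sigma$, each normalized error is at most $p^{c n}(kN)^{-1/2+\varepsilon}$ for an absolute constant $c$. Here admissibility is needed precisely to guarantee the dimension lower bound, and the small-$kN$ exceptions are absorbed into $C_0$.

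Finally we choose $M = \lfloor c'\log(kN)/\log p\rfloor$ with $c'$ small enough that $\sum_{n\le M}\frac1n p^{cn}(kN)^{-1/2+\varepsilon}\ll (kN)^{-\delta}$. The total bound is then $\ll 1/M \ll \log p/\log(kN)$, which is the claim. When $M<1$ the inequality is trivial once $C_0$ is large enough. For the newform version we apply Möbius-type inversion: $\operatorname{tr}(\mathbf{T}_{p^n}|S_k^{\nw,\sigma}(N))$ equals a signed combination of old-space traces over divisors $M\mid N$ with multiplicities. The main terms combine to $\frac{k-1}{12}\psi^{\nw}(N)$, which is bounded below via \eqref{eqn:bound-on-psi-new}. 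The error terms are summed with a factor $\sigma_0(N)^{O(1)}=N^{o(1)}$, controlled by \eqref{eqn:nicolas-robin-sigma-bound}.

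The main obstacle is the uniform bookkeeping in the twisted trace formula for $\operatorname{tr}(\mathbf{T}_{p^n}W_Q)$. Every error term must be shown to be $\ll p^{cn}(kN)^{1-\delta}$ with constants independent of $p$, $k$, $Q$ and $\sigma$, and the dimension of each admissible sign-pattern subspace must be bounded below by $\gg kN/2^{\omega(N)}$. I would first handle the class-number sums using $H(D)\ll \sqrt{|D|}\log|D|$, and treat the $2^{\omega(N)}$ loss using \eqref{eqn:robin-omega-bound}, which costs only $N^{o(1)}$.
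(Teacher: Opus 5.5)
The paper does not reprove this theorem; it quotes it from \cite{KMRX}. The bounds recorded in Appendix~\ref{subsection:explicit-form-sigma} have the form of a Murty--Sinha term $\|\mu_p\|/(M+1)$ plus Atkin--Lehner-twisted trace errors of size $A\,2^{B\omega(N)}\sigma_0(N)^C(\log N)^D N^{c-1/2}$, with $M=\lfloor c\log N/\log p\rfloor$. This shows the cited argument follows the route you describe. Your treatment of the full space $S_k^{\sigma}(N)$ is sound. For $Q>1$ there really is no identity term in $\operatorname{tr}(\mathbf{T}_{p^n}W_Q\mid S_k(N))$: a scalar $\lambda I$ of the shape $\bigl(\begin{smallmatrix}Qa&b\\Nc&Qd\end{smallmatrix}\bigr)$ with $\lambda^2=p^nQ$ would force $Q\mid p^n$. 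There are two harmless slips. The $X_n$ are orthogonal for the Sato--Tate measure, not for $\mu_p$, but you only use their $\mu_p$-moments. Your intermediate Eichler--Selberg bound also drops the parabolic term $\sum_{c\mid N}\phi(\gcd(c,N/c))$, which can be $\asymp\sqrt N$, though your final bound $p^{cn}(kN)^{-1/2+\varepsilon}$ absorbs it.

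The gap is in the newspace extension. On the newspace the Atkin--Lehner-twisted traces do have main terms, so your claim that the main terms of $\operatorname{tr}(\mathbf{T}_{p^n}\mid S_k^{\nw,\sigma}(N))$ combine to $\frac{k-1}{12}\psi^{\nw}(N)$ (with or without a factor $2^{-\omega(N)}$) is false. Inverting from full spaces to newspaces requires knowing how $W_{\ell^r}$ acts on oldforms. For a newform $g$ whose level has $\ell$-part $\ell^s$, it acts by the reversal $g|B_{\ell^j}\mapsto w_{\ell^s}(g)\,g|B_{\ell^{r-s-j}}$ for $0\le j\le r-s$. The trace of this reversal is $w_{\ell^s}(g)$ if $r-s$ is even and $0$ otherwise. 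For $N=\ell^2M$ with $\ell\nmid M$ this gives
\[
\operatorname{tr}(\mathbf{T}_{p^n}W_{\ell^2}\mid S_k(N))=\operatorname{tr}(\mathbf{T}_{p^n}\mid S_k(M))+\sum_{M'\mid M}\sigma_0(M/M')\,\operatorname{tr}(\mathbf{T}_{p^n}W_{\ell^2}\mid S_k^{\nw}(\ell^2M')).
\]
The left side has no identity term. Hence $\operatorname{tr}(\mathbf{T}_{p^n}W_{\ell^2}\mid S_k^{\nw}(N))$ inherits the main term $-\frac{k-1}{12}\psi^{\nw}(M)\,p^{n(k-2)/2}$ (for even $n$). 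This is of the same order as $\dim S_k^{\nw}(N)\approx\frac{k-1}{12}(\ell^2-\ell-1)\psi^{\nw}(M)$.

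Pushed through the projector, the leading term of $\dim S_k^{\nw,\sigma}(N)$ becomes
\[
\frac{k-1}{12}\,2^{-\omega(N)}\,\psi^{\nw}(N)\prod_{\ell^2\| N}\Bigl(1-\frac{\sigma(\ell^2)}{\ell^2-\ell-1}\Bigr).
\]
This vanishes for $\ell=2$, $\sigma(4)=+1$; for example, $S_6^{\nw}(4)=\mathbb{C}\,\eta(2z)^{12}$ has $W_4$-eigenvalue $-1$. This is the phenomenon the admissibility hypothesis excludes. With your main-term formula every $\sigma$ would have asymptotically the same dimension, nothing would need excluding, and you would be normalizing the traces by the wrong quantity.

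The repair is to keep every identity contribution produced by the inversion. Each equals $p^{n(k-2)/2}$ (for even $n$) times the corresponding contribution to the leading term of the dimension. So after dividing by $p^{n(k-1)/2}\dim S_k^{\nw,\sigma}(N)$ you still obtain $p^{-n/2}$. Once $\sigma(4)=+1$ is excluded when $4\| N$, the product above is bounded below by an absolute positive constant. That gives the lower bound $\dim S_k^{\nw,\sigma}(N)\gg k\psi^{\nw}(N)/2^{\omega(N)}$ your argument needs.
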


We make the following three modifications to this theorem.
\begin{enumerate}
    \item We specialize to $k = 2.$ Thus, with a slight adjustment of the constant $C_0$, we can replace the denominator $\log kN$ with $\log N.$
    \item By \cite[Theorem 3.1]{KMRX}, the constant $C_0$ can be taken to be arbitrarily close to $2\| \mu_2 \|$ if one restricts to large enough $N.$ For fixed $p,$ one can generalize $2\| \mu_2 \|$ to $2\| \mu_p \|$.
    \item We replace the indicator function $\chi_I$ with an arbitrary Riemann-integrable test function $g.$ This requires that we multiply the error term by $\delta(g)$ (see \cite[Corollary 2.6]{KMRX}).
\end{enumerate}
This strenthens \cite[Theorem 1.1]{KMRX} (for $k = 2$) by replacing \eqref{eqn:KMRX-thm-1.1} with the bound
\begin{align}
    \left|
\sum_{\lambda \in \operatorname{eigen}_{S_k^{\sigma}(N)}(\mathbf{T}'_p)} g(\lambda)
-
\int_{-2}^2 g(x)\, d\mu_p(x)
\right|
\leq C_p^{\operatorname{AL}}(\log N) \cdot \frac{\delta(g)\log p}{\log N} \label{eqn:Atkin-Lehner-equidistribution}
\end{align}
for a universally bounded family of positive real-valued functions $\left\{ C_p^{\operatorname{AL}}(N) \right\}_p$ with $\lim_{X \rightarrow \infty} C_p^{\operatorname{AL}}(X) = 2 \| \mu_p \|.$ An explicit formula for $C_p^{\operatorname{AL}}(X)$ is given in Appendix \ref{subsection:explicit-form-sigma}.

\begin{remark}
    The analog of \eqref{eqn:Atkin-Lehner-equidistribution} over the Atkin-Lehner newspace $S_2^{\nw, \sigma}(N)$ also holds, with the functions $\left\{ C_p^{\operatorname{AL}}(N) \right\}_p$ replaced by a different universally bounded family $\left\{ C_p^{\nw, {\operatorname{AL}}}(N) \right\}_p$ also satisfying $\lim_{X \rightarrow \infty} C_p^{\nw, {\operatorname{AL}}}(X) = 2 \| \mu_p \|.$ This is precisely the family of functions appearing in Proposition \ref{prop:p-Frob}, which follows immediately from a change of variables.
\end{remark}

\begin{remark}
    We also define a universally bounded family of functions $\left\{C_p^{\operatorname{AL}, \operatorname{pm}}\right\}_p$ such that for all prime levels $P \neq p,$
    \begin{align}
        \left|
        \frac{\# \operatorname{eigen}_{S_2^\sigma(P)}(\mathbf{T}'_p) \cap I}
        {\dim S_2^\sigma(P)}
        -
        \int_{-2}^2 \chi_I(x)\, d\mu_p(x)
        \right|
        \leq C_p^{\operatorname{AL},\text{pm}}(\log P) \cdot \frac{\log p}{\log P}. \label{eqn:Atkin-Lehner-equidistribution-prime}
    \end{align}
These functions still satisfy $\lim_{X \rightarrow \infty } C_p^{\operatorname{AL}, \text{pm}}(X) = 2 \| \mu_p \|,$ but restricting to prime levels causes the functions to converge much faster than $\{C_p^{\operatorname{AL}}\}_p.$
\end{remark}

The same modifications can likewise be made to Murty and Sinha's result \cite[Theorem 2]{MS07}, which effectivizes the equidistribution of the eigenvalues of $\mathbf{T}_p$ over the whole space $S_2(N).$ We modify this result as follows.

\begin{theorem}[{Modification of \cite[Theorem 2]{MS07}}]
Let $p$ be a prime, $I\subseteq [-2,2]$ be an interval and $N$ coprime to $p$. Then there exists a universally bounded family of positive real-valued functions $\{C_p(X)\}_p$ such that
\[
\left|
\frac{\# \operatorname{eigen}_{S_2(N)}(\mathbf{T}'_p) \cap I}
{\dim S_2(N)}
-
\int_{-2}^2 \chi_I(x)\, d\mu_p(x)
\right|
\leq C_p(\log N) \cdot \frac{\log p}{\log N}
\]
and such that $\lim_{X \rightarrow \infty } C_p(X) = \frac{3}{2} \| \mu_p \|.$
\end{theorem}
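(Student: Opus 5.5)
The plan is to follow the Murty--Sinha argument \cite[Theorem 2]{MS07} and make every constant explicit in $p$ and $N$, in the same way the three modifications above were applied to \cite[Theorem 1.1]{KMRX}.

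\textbf{Step 1: Beurling--Selberg majorants and minorants.} Fix a truncation parameter $M$. For the interval $I \subseteq [-2,2]$, written in the variable $x = 2\cos\theta$, take trigonometric polynomials of degree at most $M$ in $\theta$ that majorize and minorize $\chi_I$. Expand each in the Chebyshev basis $X_n(x)$, where $X_n(2\cos\theta) = \sin((n+1)\theta)/\sin\theta$. Up to the standard shift of basis, the Chebyshev coefficients are $O(1)$, and the $L^1(d\mu_p)$ mass of the difference between majorant and minorant is $O(\|\mu_p\|/M)$. The leading constant in this bound is what should produce the limit $\frac{3}{2}\|\mu_p\|$.

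\textbf{Step 2: Eichler--Selberg trace formula.} Since $X_n(\mathbf{T}_p') = p^{-n/2}\mathbf{T}_{p^n}$ on the whole space $S_2(N)$, compute $\operatorname{tr}\mathbf{T}_{p^n}$ on $S_2(N)$ with the trace formula. The main term $\frac{\psi(N)}{12}\cdot(\text{identity contribution})$ reproduces exactly $\int X_n\, d\mu_p$, times $\dim S_2(N)$. The elliptic, hyperbolic and parabolic terms are bounded by $O\bigl(p^{n/2}\, \sigma_0(N)\cdot(\text{class number sums}) \cdot \log(\cdot)\bigr)$, as in \cite{MS07}.

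\textbf{Step 3: Sum and choose $M$.} Sum over $n \le M$; the error is roughly $p^{M/2}\,\sigma_0(N)\, N^{\epsilon}/\dim S_2(N)$. Using $\dim S_2(N) \asymp \psi(N)/12$ together with \eqref{eqn:nicolas-robin-sigma-bound} and \eqref{eqn:Sole-Planat-psi-bound}, choose
\[
M \approx \frac{\log N}{\log p}\,(1-\eta(N)),
\]
with $\eta(N) \to 0$. With this choice the trace-formula error is $o(1/M)$, and the total error becomes $C_p(\log N)\cdot \frac{\log p}{\log N}$. Here $C_p(X)$ collects all the explicit lower-order corrections; it is universally bounded because $\|\mu_p\| \le \frac{3\sqrt2}{4\pi}$ and the corrections are uniform in $p$. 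It tends to $\frac{3}{2}\|\mu_p\|$ because $\eta \to 0$. For the finitely many small $N$ where the choice of $M$ degenerates, the bound holds trivially once $C_p$ is enlarged there, since the left-hand side is at most $1+\|\mu_p\|\cdot 4$.

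\textbf{Main obstacle.} The hardest step is Step 3: tracking the trace-formula error uniformly enough that the leading constant is exactly $\frac{3}{2}\|\mu_p\|$ rather than an unspecified $O(1)$. First I would try the convex-combination Beurling--Selberg trick used in \cite[Corollary 2.6]{KMRX}. This trades the factor $2$ coming from approximation at both endpoints for $\frac{3}{2}$, by exploiting $\sin^2$-weighted positivity. I would then verify that the $p^{M/2}$ growth of the error terms is absorbed by the choice of $M$ with only a $1+o(1)$ loss.
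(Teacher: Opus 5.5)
Your skeleton (Beurling--Selberg polynomials, then the Eichler--Selberg trace formula, then a choice of $M$) is the right one. The gap is the size of the trace-formula error in Steps 2--3, and that size is exactly where $\frac32$ comes from.

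In the Murty--Sinha inequality \cite[Theorem 19]{MS07}, which the paper starts from, the Beurling--Selberg term is $\frac{\|\mu_p\|}{M+1}$, with leading constant $1$. The trace-formula term is $\frac{4p^M 2^{\omega(N)}\sup_{n^2<4p^M}\psi(n)+2\Sigma(N)+2p^{M/2}}{\dim S_2(N)}$. Its elliptic part grows like $p^{3M/2}$, not $p^{M/2}$.

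The paper takes $M=\lfloor c\log N/\log p\rfloor$ and uses $\dim S_2(N)\ge\frac{3}{200}N$ and $\Sigma(N)\le\sqrt N\,\sigma_0(N)$. The trace-formula term is then $N^{3c/2-1+o(1)}$, so it is negligible only for $c<\frac23$. The paper lets $c\uparrow\frac23$, taking $c=\frac23+\frac{2\log K}{3X}-\frac{2\log(X\log X)}{3X}-\frac{2\cdot 1.06602}{3\log X}$ with $X=\log N$ and a Lambert-$W$ choice of $K\to0$. This gives $\frac{\|\mu_p\|}{M+1}\le\frac{\|\mu_p\|}{c}\cdot\frac{\log p}{\log N}$, with $\frac{\|\mu_p\|}{c}\to\frac32\|\mu_p\|$.

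Your choice $M\approx(1-\eta)\frac{\log N}{\log p}$, with the trace-formula terms bounded ``as in \cite{MS07}'', makes that term of size $N^{1/2+o(1)}$. So Step 3 fails as written.

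Your account of the constant also does not hold up. There is no factor $2$ in the Beurling--Selberg step to trade for $\frac32$. The paper uses \cite[Corollary 2.6]{KMRX} only to pass from $\chi_I$ to test functions of bounded variation, at the cost of the factor $\delta(g)$; it plays no role in leading constants. The same pattern explains the Atkin--Lehner constant $2\|\mu_p\|$ in the appendix: it comes from $\frac{\|\mu_p\|}{c}$ with $c\uparrow\frac12$, forced by the size of the error term.

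Moreover, with your own parameters $M\approx(1-\eta)\log N/\log p$, the limit would be (your Step 1 constant)$\times\|\mu_p\|$. Letting $\eta\to0$ cannot by itself produce $\frac32$.

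Suppose you could genuinely prove a trace-formula error of size $p^{M/2}\sigma_0(N)N^{\epsilon}$. That would need a sharper treatment of the elliptic terms than \cite{MS07}, who bound the local factors crudely by $2^{\omega(N)}\sup\psi$. You would then get a smaller limit, $\|\mu_p\|$, or even $\frac12\|\mu_p\|$ with $M\approx 2\log N/\log p$. The stated theorem would follow only after the trivial replacement $C_p\mapsto\max(C_p,\frac32\|\mu_p\|)$. Your proposal neither proves that sharper estimate nor makes this observation.

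To repair the argument along the paper's lines:
\begin{itemize}
\item keep Step 1 with constant $1$;
\item use the Murty--Sinha error terms as they stand, together with the Nicolas--Robin and Sol\'e--Planat bounds;
\item take $M=\lfloor c\log N/\log p\rfloor$ with $c\uparrow\frac23$.
\end{itemize}
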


That $C_p$ can be taken to tend towards $\frac{3}{2} \| \mu_p \|$ can be seen directly from the proof of \cite[Theorem 2]{MS07}, or otherwise from an explicit formula for $C_p(X)$ that we derive in Appendix \ref{subsection:explicit-form-C}.

Likewise, we define and later provide explicit expressions for the analogous functions
\begin{enumerate}
    \item $C_p^{\text{pm}}$ satisfying $\lim_{X \rightarrow \infty} C_p^{\text{p}} =  \| \mu_p \|$; explicit form derived in Appendix \ref{subsection:explicit-form-prime-divisors}
    \item $C_p^{\text{pp}}$ satisfying $\lim_{X \rightarrow \infty} C_p^{\text{pp}} =  \frac{3}{2}\| \mu_p \|$; explicit form derived in Appendix \ref{subsection:explicit-form-prime-divisors}
    \item $C_p^{\nw}$ satisfying $\lim_{X \rightarrow \infty} C_p^{\nw} = \frac{3}{2} \| \mu_p \|$; explicit form derived in Appendix \ref{subsection:explicit-form-newspace}
    \item $C_p^{\nw, \text{pp}}$ satisfying $\lim_{X \rightarrow \infty} C_p^{\nw, \text{pp}} =  \frac{3}{2}\| \mu_p \|$; explicit form derived in Appendix \ref{subsection:explicit-form-newspace}.
\end{enumerate}
Here, $C_p^{\text{pp}}$ is defined analogously to $C_p^{\text{pm}},$ except one ranges the level $N$ over all prime powers rather than just primes. $C_p^{\nw, \text{pp}}$ is defined likewise. All of these families of functions $\{C_p^{\text{condition}}\}_p$ can be used to derive analogs of Proposition \ref{prop:p-Frob}.

\subsection{Proof that only finitely many \texorpdfstring{$J_0(N)$}{J0(N)} are supersingular modulo \texorpdfstring{$p$}{p}}\label{subsection:proof-of-main-vertical-theorem}

We study when the newparts $J_0^{\nw,\sigma}(N)$ of the Atkin-Lehner isotypic components of the Jacobian $J_0(N)$ are supersingular modulo $p$. We begin by setting the following notation.

\begin{definition}
    For an algebraic number $\alpha \in \overline{\mathbb{Q}},$ we let
$\operatorname{Conj}_{\operatorname{Gal(\overline{\mathbb{Q}}/\mathbb{Q}})} ( \alpha)$
denote the set of $\operatorname{Gal(\overline{\mathbb{Q}}/\mathbb{Q}})$-conjugates of $\alpha.$
\end{definition}

We prove the following proposition, which immediately implies Theorem \ref{thm:finiteness-of-supersingularity} and Proposition \ref{prop:intro-vertical}, since the family $\{ C_p(X) \}_p$ and all its analogs are universally bounded (see \S\ref{subsection:equidistribution-survey}).

\begin{proposition}\label{prop:vertical-supersingularity}
Fix a prime \(p\). Then $J_0^{\nw,\sigma}(N)$ can only be supersingular modulo $p$ for
\begin{align}
    N \leq \begin{cases}
        p^{150 \,C_p^{\nw,\operatorname{AL}}(\log N)} & \text{if } p = 2\\
        p^{32\frac{p}{p - 2} \, C_p^{\nw,\operatorname{AL}}(\log N)} & 
        \text{if } p \geq 3.
    \end{cases}
\end{align}
The analogous statement also holds for $J_0(N),$ $J_0^\nw(N),$ and $J_0^{\sigma}(N),$ when $C_p^{\nw, {\operatorname{AL}}}$ is replaced by $C_p,$ $C_p^\nw,$ and $C_p^{\operatorname{AL}},$ respectively.
\end{proposition}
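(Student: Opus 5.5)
We argue by contradiction via a cyclotomic obstruction. Suppose $J_0^{\nw,\sigma}(N)$ is supersingular modulo $p$. By the Manin--Oort criterion, every normalized eigenvalue $e^{i\theta} = \alpha/\sqrt p$ with $\alpha \in \Lambda_N^{\nw,\sigma}$ is a root of unity. Equivalently, every normalized Hecke eigenvalue $x = 2\cos\theta \in \eigen_{S_2^{\nw,\sigma}(N)}(\mathbf{T}_p')$ has the form $\zeta+\zeta^{-1}$ with $\zeta$ a root of unity. Moreover $\alpha = \sqrt p\,\zeta$ is an algebraic integer of degree at most $2[K_f:\mathbb{Q}]$. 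Hence $\zeta \in \QQ(\sqrt p, \alpha)$ has bounded order relative to the Galois orbit. The multiset of eigenvalues is also closed under $\operatorname{Gal}(\overline{\QQ}/\QQ)$-conjugation, since the characteristic polynomial of $\mathbf{T}_p$ on $S_2^{\nw,\sigma}(N)$ has integer coefficients.

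The key input is a lemma (the cyclotomic obstruction, Lemma \ref{lem:cyclotomic_obstruction}) showing that such a Galois-stable multiset of "root-of-unity angles" must put a definite amount of mass where $d\hat\mu_p$ puts little, or vice versa. The plan is to choose a single test function $g$ of bounded total variation, localized at a small arc. One natural choice is an indicator of a small interval around $\theta=\pi/2$, i.e.\ around $x=0$.

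The cases split according to which arcs the angles can occupy.
\begin{itemize}
\item If $\zeta = \pm i$, then $a_p=0$, and these angles lie exactly at $\pi/2$.
\item Otherwise $\sqrt p\,\zeta$ has trace $a_p\in\mathcal O_{K_f}$. The norm/integrality constraint (the conjugates of $a_p = \sqrt p(\zeta+\zeta^{-1})$ are algebraic integers) forces $\zeta$ to be a $2^a3^b$-type root of unity of small order. For example, $\zeta^2$ must satisfy $p(\zeta+\zeta^{-1})^2 \in \ZZ$-like conditions, so $2\cos\theta \in \{0,\pm1,\pm\sqrt2,\pm\sqrt3\}$ up to the factor $\sqrt p$; in fact $|a_p| = \sqrt{p}\,|2\cos\theta|$ must be an algebraic integer of squared norm a power of $p$ times a unit.
\end{itemize}
Concretely, I would show that the supersingular angles lie in a finite set $\Theta_p$ of at most about a dozen points (e.g.\ $\theta\in\{\pi/2,\pi/3,2\pi/3,\pi/4,3\pi/4,\pi/6,5\pi/6\}$ and their reflections). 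For $p\ge 5$ only $\theta=\pi/2$ survives, because $a_p^2 = p(2\cos\theta)^2$ must be an integer not exceeding $4p$, and is $\equiv 0 \bmod p$ at a supersingular prime. The cases $p=2,3$ allow the extra angles, and this produces the worse constants, with $150$ for $p=2$.

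Then take $g = \chi_U$ for $U$ the complement of small arcs of radius $\epsilon$ around $\Theta_p$, with $\delta(g)\le 2|\Theta_p|$. Supersingularity gives $\frac{1}{\#\Lambda}\sum g(\theta)=0$. Meanwhile $\int g\,d\hat\mu_p \ge 1-\sum_{\theta_0\in\Theta_p}\hat\mu_p(B_\epsilon(\theta_0))$. The density of $d\hat\mu_p$ is at most $\frac{p+1}{\pi}\cdot\frac{1}{(\sqrt p-1/\sqrt p)^2}=\frac{p(p+1)}{\pi(p-1)^2}$ near $\pi/2$. Choosing $\epsilon$ optimally gives a lower bound of order $c\,\frac{p-2}{p}$. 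I would verify the exact factor $\frac{p}{p-2}$ by this computation, perhaps using the exact value of $\hat\mu_p$ of an arc around $\pi/2$.

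Proposition \ref{prop:p-Frob} then yields $c\frac{p-2}{p}\le C_p^{\nw,\operatorname{AL}}(\log N)\frac{\frac12\delta(g)\log p}{\log N}$. Rearranging gives $\log N\le 32\frac{p}{p-2}C_p^{\nw,\operatorname{AL}}(\log N)\log p$, which is the claim. The case $p=2$ is handled the same way with the larger set $\Theta_2$ and numerics leading to $150$.

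The analogous statements for $J_0(N)$, $J_0^\nw(N)$ and $J_0^\sigma(N)$ follow verbatim, using the corresponding equidistribution bounds with $C_p$, $C_p^\nw$ and $C_p^{\operatorname{AL}}$.

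The main obstacle is the arithmetic step classifying the possible angles $\Theta_p$. This requires arguing that $a_p/\sqrt p$ being $2\cos(2\pi k/n)$ with $a_p$ totally real and integral forces small $n$, uniformly in $[K_f:\QQ]$. Uniformity in the degree is delicate; the fallback is to use only that $a_p^2/p$ is a totally positive algebraic integer in $[0,4]$ with all conjugates $2+2\cos(2\theta')$.
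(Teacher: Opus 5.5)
There is a genuine gap, and it is exactly the step you flag as the main obstacle. The problem is not only uniformity: the classification into a finite, $N$-independent set $\Theta_p$ is false. The argument that $a_p^2 = p(2\cos\theta)^2$ is an integer in $[0,4p]$ divisible by $p$ is the elliptic-curve argument, and it needs $a_p(f)\in\ZZ$.

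In general you only know $a_p(f)\in\mathcal{O}_{K_f}$, and $[K_f:\QQ]$ is unbounded as $N$ grows. Since $\sqrt p$ and $\zeta+\zeta^{-1}$ are both algebraic integers, $\sqrt p\,(\zeta+\zeta^{-1})$ is a totally real algebraic integer with all conjugates in $[-2\sqrt p,2\sqrt p]$ for \emph{every} root of unity $\zeta$. So integrality places no restriction on the order $n$. For example, $\sqrt p$, $\sqrt{2p}$ and $\sqrt{3p}$ (angles $\pi/3,\pi/4,\pi/6$) are admissible for every $p$. For $p=5$, the value $(5-\sqrt5)/2=\sqrt5\cdot 2\cos(2\pi/5)$ is also admissible.

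Proposition~\ref{prop:structure-result} shows that a large order $n$ forces $p\mid\operatorname{disc}(K_f)$ or $p-1\mid 4[K_f:\QQ]$. In the vertical setting, with $p$ fixed and $N$ varying, nothing rules this out. Consequently the identity $\frac{1}{\#\Lambda}\sum g(\theta)=0$ for your $g=\chi_U$ fails. In any case, the constants $32\frac{p}{p-2}$ and $150$ would not come out of your construction; you are reverse-engineering them.

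The missing idea is to use Galois stability structurally, not as a size constraint.
\begin{enumerate}
\item Supersingularity plus Honda--Tate (or simply rationality of the Frobenius characteristic polynomial) makes $\Lambda_N^{\nw,\sigma}$ a multiset union of full conjugacy classes of numbers $\sqrt p\,\zeta$.
\item A case split on whether $\sqrt p\in\QQ(\zeta)$ shows that each normalized class is a disjoint union of blocks in bijection with $\Phi_n^+$ via $\zeta\mapsto\pm\zeta^{\pm1}$. The signs come from $\sqrt p\mapsto-\sqrt p$, possibly through a quadratic character.
\item You then need a single test function, invariant under $\theta\mapsto-\theta$ and $\theta\mapsto\theta+\pi$, whose average over every block is uniformly small.
\end{enumerate}

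Your fallback points in the right direction, since $a_p^2/p=2+2\cos 2\theta$. The paper takes $g_0(\theta)=-2\cos(2\theta)$ with the finitely many angles of order $n\in\{3,4,6\}$ deleted (and also $n\in\{5,10\}$ when $p=2$). By Ramanujan sums, the block average is $-\frac{2}{\phi(n)}\frac{\gcd(n,4)}{\gcd(n,2)}\mu\bigl(n/\gcd(n,2)\bigr)\le\frac12$ for all surviving $n$. Meanwhile $\int g_0\,d\hat\mu_p=1-\frac1p$ and $\delta(g_0)=32$. Proposition~\ref{prop:p-Frob} then gives $\frac{p-2}{2p}\le 16\,C_p^{\nw,\operatorname{AL}}(\log N)\frac{\log p}{\log N}$, which is the stated bound (Lemma~\ref{lem:cyclotomic_obstruction}).
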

\begin{proof}
We focus on the case of $J_0^{\nw, \sigma}(N)$, and the proof of all the other analogs is identical.

Set $d_N:=\dim S_2^{\mathrm{new},\sigma}(N)=\dim J_0^{\nw, \sigma}(N)$, and let $f_1,\dots,f_{d_N}$
denote the newforms in \(S_2^{\mathrm{new},\sigma}(N)\).
Let $\Lambda_N^{\nw, \sigma}$ denote the multiset of \(p\)-Frobenius eigenvalues for $J_0^{\nw, \sigma}(N)$,
\begin{align}
    \Lambda_N^{\nw, \sigma} := \lrcb{ \alpha_j, \overline \alpha_j : 1 \le j \le d_N}, \qquad \text{where } \alpha_j, \overline\alpha_j \text{ are the roots of $x^2-a_p(f_j)x+p$.}
\end{align}
The multisets
$
\frac{1}{\sqrt p}
\Lambda_N^{\nw, \sigma}
$
are equidistributed on the unit circle
\(
\{e^{i\theta}:0\le\theta<2\pi\}
\)
via the measure \(d\hat\mu_p(\theta)\) and with explicit error bound given by Proposition~\ref{prop:p-Frob}. 

Now, recall that by the Manin--Oort criterion, $J_0^{\nw, \sigma}(N)$ is supersingular if and only if each eigenvalue $\alpha \in \Lambda_N^{\nw, \sigma}$ is of the form $\alpha = \sqrt p\,\zeta$ for some root of unity \(\zeta\). Additionally, recall that by the Honda–Tate theorem, simple abelian varieties over $\mathbb{F}_p$ correspond to $\mathrm{Gal}(\overline{\mathbb{Q}}/\mathbb{Q})$-conjugacy classes of $p$-Weil numbers, via the bijection that sends an abelian variety to the set of its $p$-Frobenius eigenvalues. Consequently, arbitrary abelian varieties over $\mathbb{F}_p$ correspond to multiset unions of $\mathrm{Gal}(\overline{\mathbb{Q}}/\mathbb{Q})$-conjugacy classes of $p$-Weil numbers.
Hence if $J_0^{\nw, \sigma}(N)$ is supersingular, then
$\Lambda_N^{\nw, \sigma}$ can be written as a multiset union of $\text{Gal}(\overline{\mathbb{Q}}/\mathbb{Q})$-conjugacy classes of elements of the form $\sqrt p\,\zeta$; i.e.,
\begin{align}
    \Lambda_N^{\nw, \sigma} = \bigcup_{\zeta \in Z_N} \mathrm{Conj}_{\text{Gal}(\overline{\mathbb{Q}}/\mathbb{Q})}(\sqrt p \,\zeta),
\end{align}
where $Z_N$ is some multiset of roots of unity. 
By Lemma \ref{lem:conjugates-shape}, this then means that $\frac{1}{\sqrt{p}} \Lambda_N^{\nw, \sigma}$ can be written as a multiset union of $\Phi_n^+$-imitators (see Definition \ref{def:phi-n-plus-imitators}). But by Lemma \ref{lem:cyclotomic_obstruction}, this can only occur for 
$
    N \leq
    \begin{cases}
         2^{150 \, C_p^{\nw,\operatorname{AL}}(\log N)} 
        & \text{ if } p=2 \\
        p^{ 32\frac{p}{p-2} \, C_p^{\nw,\operatorname{AL}}(\log N)}
        & \text{ if } p \geq 3
    \end{cases}
$. This completes the proof.
\end{proof}

\begin{corollary}
For a fixed prime $p$ and $N$ sufficiently large, there exists a $\mathbb{Q}$-simple abelian variety $A_f$ with conductor $N^{\dim A_f}$ that is not supersingular modulo $p.$
\end{corollary}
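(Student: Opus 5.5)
The plan is to deduce this from Proposition \ref{prop:vertical-supersingularity}, applied to the full new part $J_0^{\nw}(N)$ (or to any single Atkin--Lehner piece $J_0^{\nw,\sigma}(N)$), together with the $\mathbb{Q}$-isogeny decomposition $J_0^{\nw}(N) \sim_{\mathbb{Q}} \prod_f A_f$ recalled in \S\ref{subsection:modular-jacobians}. The product runs over Galois orbits of newforms $f \in S_2^{\nw}(N)$. Each such $A_f$ is $\mathbb{Q}$-simple with conductor $N^{[K_f:\mathbb{Q}]} = N^{\dim A_f}$, so it suffices to find one factor $A_f$ that is not supersingular at $p$.

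Fix $p$. Since the family $\{C_p^{\nw}\}_p$ is universally bounded, Proposition \ref{prop:vertical-supersingularity} (the $J_0^{\nw}(N)$ analog) gives an explicit threshold $N_0(p)$, namely $M^{\nw}(p)$ in Definition \ref{defn:M(p)}. For $N > N_0(p)$, the variety $J_0^{\nw}(N)$ is not supersingular modulo $p$. I will restrict to $N$ coprime to $p$, since good reduction at $p$ and Proposition \ref{prop:p-Frob} both require this. For $p \mid N$ the statement presumably should be read with that restriction, or one notes that $J_0(N)$ has bad reduction there.

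Supersingularity passes to isogenous varieties and to products: a product is supersingular if and only if every factor is. This holds because the multiset of Frobenius eigenvalues of a product is the union of the factors' multisets, and the Manin--Oort criterion is a condition on each eigenvalue separately. So if every $A_f$ were supersingular at $p$, then $J_0^{\nw}(N)$ would be too, which is a contradiction. Hence some $A_f$ with conductor $N^{\dim A_f}$ is not supersingular modulo $p$.

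The only real subtlety is the non-emptiness needed for Proposition \ref{prop:p-Frob}: $S_2^{\nw}(N)$ must be nonzero for all large $N$. This follows from the lower bound \eqref{eqn:bound-on-psi-new} on $\psi^{\nw}(N)$, since $\dim S_2^{\nw}(N) \approx \psi^{\nw}(N)/12$ grows to infinity. Alternatively, if $S_2^{\nw}(N)=0$ there is no $A_f$ and the statement would fail. So I would invoke the standard dimension formula to ensure that $\dim S_2^{\nw}(N) > 0$ for $N$ sufficiently large, and I expect this to be the main point to check carefully.
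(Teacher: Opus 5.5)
Your proposal is correct and follows essentially the argument the paper intends. The paper gives no separate proof: it states the corollary as an immediate consequence of Proposition \ref{prop:vertical-supersingularity} (equivalently, the finiteness of $M^{\nw}(p)$), combined with the decomposition $J_0^{\nw}(N) \sim_{\mathbb{Q}} \prod_f A_f$ into $\mathbb{Q}$-simple factors of conductor $N^{\dim A_f}$, so that a non-supersingular $J_0^{\nw}(N)$ must have a non-supersingular factor. Your extra care about restricting to $p \nmid N$ and about $S_2^{\nw}(N) \neq 0$ for large $N$ (which follows from the dimension lower bound via Martin's formula used in the appendix) supplies details the paper leaves tacit.
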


\begin{remark}
The statement of Proposition \ref{prop:vertical-supersingularity} also holds for $J_1(N)$, $J(N)$ and $J(X_\Gamma):=\text{Jac}(X_\Gamma)$ for any congruence subgroup $\Gamma$ with $\Gamma \subset \Gamma_0 (N)$. This follows from the fact that if $\Gamma \subset \Gamma_0(N)$, 
then all $A_f$ for $f \in S_2(\Gamma_0(N))$ appear in the 
$\mathbb{Q}$-isotypic decomposition of $J(X_\Gamma)$, 
because the pullback of the 
forgetful map 
$\pi:X_\Gamma \to X_0(N)$ is a map $\pi^*:J_0(N) \to J(X_\Gamma)$ 
with finite kernel.
\end{remark}

We also obtain an analogous result for the Shimura Jacobians $J_0^D(N) := \text{Jac}(X_0^D(N))$. Let $B/\mathbb{Q}$ be an indefinite quaternion algebra of discriminant $D$, so $D$ is a squarefree product of an even number of distinct primes. Then the Shimura curve $X_0^D(N)$ is attached to an Eichler order of level $N$ in $B$. 

\begin{corollary}\label{cor:shimura}
Fix a prime $p$. Then $J_0^D(N)$ is not supersingular modulo $p$ for all coprime integers $D$ and $N$ with $DN > M^{\nw}(p)$.
\end{corollary}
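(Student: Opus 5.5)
The plan is to reduce the Shimura Jacobian to a classical modular Jacobian via the Jacquet--Langlands correspondence, in the form due to Ribet.

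\emph{Step 1: the isogeny decomposition.} For an indefinite quaternion algebra of discriminant $D$ and an Eichler order of level $N$ coprime to $D$, Ribet's isogeny theorem (Jacquet--Langlands) shows that $J_0^D(N)$ is $\mathbb{Q}$-isogenous to the $D$-new part of $J_0(DN)$. Concretely,
\begin{align}
    J_0^D(N) \sim_{\mathbb{Q}} \prod_{M \mid N} \prod_{f} A_f^{\sigma_0(N/M)},
\end{align}
where $f$ runs over Galois orbits of newforms in $S_2^{\nw}(DM)$. In particular, the factor $J_0^\nw(DN)$, which is the $M = N$ term, occurs as an isogeny factor of $J_0^D(N)$ over $\mathbb{Q}$.

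\emph{Step 2: the two relevant properties.} Supersingularity modulo $p$ passes to isogeny factors. If $J_0^D(N)$ is isogenous over $\overline{\mathbb{F}}_p$ to a product of supersingular elliptic curves, then every Frobenius eigenvalue of any isogeny factor is also an eigenvalue of $J_0^D(N)$. The Manin--Oort criterion, in the form recalled in the introduction, then shows that each such factor is supersingular. Supersingularity is also preserved under isogeny.

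We also need $p$ to be a prime of good reduction. Since $J_0^D(N)$ has good reduction at $p$ exactly when $p \nmid DN$, we restrict to that case, matching the convention of Definition~\ref{defn:M(p)}. There, $p \in \operatorname{SS}(J_0^\nw(DN))$ is only meaningful for $p \nmid DN$.

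\emph{Step 3: applying the vertical bound.} Suppose $J_0^D(N)$ is supersingular modulo $p$. By Steps 1 and 2, $J_0^\nw(DN)$ is supersingular modulo $p$. By the definition of $M^\nw(p)$ (finite by Theorem~\ref{thm:finiteness-of-supersingularity}), this forces $DN \le M^\nw(p)$. Taking the contrapositive gives the corollary.

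One degenerate case needs care: $J_0^\nw(DN)$ may be zero. A zero abelian variety is vacuously supersingular, so $DN \le M^\nw(p)$ still holds under the convention, and the argument remains consistent.

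\emph{Main obstacle.} The only substantive input is the precise form of Ribet's isogeny, specifically that the full new part $J_0^\nw(DN)$, and not just some of its factors, embeds up to isogeny into $J_0^D(N)$. I would cite Ribet (and Helm's refinement) for the decomposition of $J_0^D(N)$ into $A_f$'s with $f$ ranging over the $D$-new forms of level $DM$, $M \mid N$, with the multiplicities stated in Step 1.
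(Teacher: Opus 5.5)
Your proposal is correct and follows the paper's argument: Jacquet--Langlands, in Ribet's form, realizes $J_0^{\nw}(DN)$ as an isogeny factor of $J_0^D(N)$; supersingularity at $p$ passes to isogeny factors; and the definition of $M^{\nw}(p)$ then gives the contrapositive. Your extra bookkeeping (the full $D$-new decomposition over $M \mid N$, the restriction to $p \nmid DN$, and the degenerate case $J_0^{\nw}(DN)=0$) goes beyond what the paper spells out but does not change the route.
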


\begin{proof}
By the Jacquet--Langlands correspondence, for all $f \in S_2^{\nw}(DN),$ we have that $A_f$ is a subvariety of $J_0^D(N),$ so $\prod_{f \in S_2^{\nw}(DN)/\sim} A_f \sim J_0^{\nw}(DN) \subset J_0^D(N)$. Therefore, whenever $DN > M^{\nw}(p)$, we have $p\notin \operatorname{SS}(J_0^{\nw}(DN))$, so $p\notin \operatorname{SS}(J_0^{D}(N))$.
\end{proof}

Finally, we establish Lemmas \ref{lem:conjugates-shape} and \ref{lem:cyclotomic_obstruction}, which were used in the proof of Proposition \ref{prop:vertical-supersingularity}.

\begin{definition}
    \label{def:phi-n-plus-imitators}
    Let $\Phi_n$ denote the set of primitive $n$-th roots of unity, and define $\Phi_n^+ := \{\zeta \in \Phi_n : \Im(\zeta) \ge 0\}$.
    Then we say that a set $\Psi_n \subseteq S^1$ is a \textit{$\Phi_n^+$-imitator} if there exists a bijection
    \begin{align}
        \xi :\Phi_n^+ \longleftrightarrow \Psi_n
    \end{align}
    such that $\xi(\zeta) = \pm \zeta^{\pm 1}$ for each $\zeta \in \Phi_n^+$. 
    Equivalently, $\Psi_n$ is a $\Phi_n^+$-imitator if
    \begin{align}
        \big\{|x| + i|y| : x+iy \in \Psi_n \big\} 
        = 
        \big\{|x| + iy : x+iy \in \Phi_n^+\big\} 
        \qquad \text{(as multisets)}.
    \end{align}
    For example, $\{-1\}$ is both a $\Phi_1^+$-imitator and a $\Phi_2^+$-imitator, and $\lrcb{e^{-2\pi/7}, e^{4\pi/7}, -e^{-6\pi/7}}$ is a $\Phi_7^+$-imitator.
\end{definition}

In Lemma \ref{lem:conjugates-shape}, we compute the form of $\operatorname{Conj}_{\operatorname{Gal}(\overline{\QQ}/\QQ)}(\sqrt p\,\zeta)$.
\begin{lemma}\label{lem:conjugates-shape}
For every prime $p$ and every
primitive $n$-th root of unity $\zeta$, 
$\frac{1}{\sqrt p} \operatorname{Conj}_{\operatorname{Gal}(\overline{\QQ}/\QQ)}(\sqrt p\,\zeta)$ is equal to a finite disjoint union of $\Phi_n^+$-imitators.
\end{lemma}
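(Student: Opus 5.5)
We need to show that the normalized conjugates $\frac{1}{\sqrt p}\operatorname{Conj}(\sqrt p\,\zeta)$ split as a disjoint union of $\Phi_n^+$-imitators. Set $\beta=\sqrt p\,\zeta$ and $L=\QQ(\zeta_n,\sqrt p)$, a Galois extension containing $\beta$. The conjugates of $\beta$ are then $\tau(\sqrt p)\tau(\zeta)=\pm\sqrt p\,\zeta^a$, where $\tau$ ranges over $\operatorname{Gal}(L/\QQ)$ and $a\in(\ZZ/n\ZZ)^\times$. The plan is to describe $\operatorname{Gal}(L/\QQ)$ as the subgroup $H\subseteq\{\pm1\}\times(\ZZ/n\ZZ)^\times$ of pairs $(\epsilon,a)$ realized by some $\tau$, via $\tau(\sqrt p)=\epsilon\sqrt p$ and $\tau(\zeta)=\zeta^a$. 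With this description,
\begin{align}
\tfrac{1}{\sqrt p}\operatorname{Conj}(\sqrt p\,\zeta)=\{\epsilon\zeta^a:(\epsilon,a)\in H\}
\end{align}
as a set. The projection $H\to(\ZZ/n\ZZ)^\times$ is surjective, since the cyclotomic character is surjective on $\operatorname{Gal}(\QQ(\zeta_n)/\QQ)$. There are two cases.

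\textbf{Case 1: $\sqrt p\notin\QQ(\zeta_n)$.} Then $H=\{\pm1\}\times(\ZZ/n\ZZ)^\times$, so the normalized conjugate set is $\Phi_n\cup(-\Phi_n)$ as a set. For $n>2$, the set $\Phi_n$ is closed under inversion and has no real elements, so $\Phi_n=\Phi_n^+\sqcup\overline{\Phi_n^+}$. Hence $\Phi_n\cup-\Phi_n$ is the union of the four imitators $\Phi_n^+,\ \overline{\Phi_n^+},\ -\Phi_n^+,\ -\overline{\Phi_n^+}$. When $-\Phi_n=\Phi_n$ (that is, $4\mid n$, or more generally whenever negation preserves $\Phi_n$), these four sets overlap, and we keep only a disjoint sub-collection: we use $\Phi_n^+\sqcup\overline{\Phi_n^+}$, which is still a disjoint union. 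The cases $n=1,2$ are handled directly: the conjugate set is $\{\pm1\}$, which is $\{1\}\sqcup\{-1\}$, a union of two $\Phi_1^+$-imitators (respectively $\Phi_2^+$-imitators), since each of $\{1\}$ and $\{-1\}$ is $\pm$ of the single element $\pm1$.

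\textbf{Case 2: $\sqrt p\in\QQ(\zeta_n)$.} Then $H$ is the graph of a character $\chi:(\ZZ/n\ZZ)^\times\to\{\pm1\}$, namely the quadratic character cutting out $\QQ(\sqrt p)$. The normalized conjugate set is therefore $\{\chi(a)\zeta^a\}$, indexed bijectively by $a$. Write $a\mapsto\zeta^a$ as a bijection onto $\Phi_n$. Splitting $\Phi_n=\Phi_n^+\sqcup\overline{\Phi_n^+}$ and pairing $\zeta^a$ with $\zeta^{-a}$, we obtain two sets, $\{\chi(a)\zeta^a:\zeta^a\in\Phi_n^+\}$ and $\{\chi(-a)\zeta^{-a}:\zeta^a\in\Phi_n^+\}$. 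Each is a $\Phi_n^+$-imitator via the map $\xi(\zeta^a)=\chi(a)\zeta^a$, respectively $\chi(-a)\zeta^{-a}$, and each has the form $\pm\zeta^{\pm1}$ applied to an element of $\Phi_n^+$.

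The \textbf{main obstacle} is disjointness. In Case 2, the map $a\mapsto\chi(a)\zeta^a$ could be non-injective: it is non-injective exactly when $\chi(a)\zeta^a=\chi(b)\zeta^b$ with $a\ne b$, which forces $\zeta^{a-b}=-1$ and $\chi(a)=-\chi(b)$. The claim in the lemma is about sets; if conjugates are instead counted as a multiset with multiplicity, then a disjoint multiset union works verbatim. To get disjointness as sets, we deduplicate: when $\chi(a+n/2)=-\chi(a)$, the map $a\mapsto a+n/2$ is an involution identifying $\chi(a)\zeta^a$ with $\chi(a+n/2)\zeta^{a+n/2}$. In that situation we would show that the set of distinct values equals a single imitator, namely the image of $\Phi_n^+$, using the fact that $\pm\zeta$ ranges over representatives exactly once. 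The same deduplication, by choosing orbit representatives under $\zeta\mapsto-\zeta$ and $\zeta\mapsto\bar\zeta$, handles the overlaps in Case 1. I would present the proof as a decomposition of the orbit of $\sqrt p\zeta$ under $H$ into orbits of the subgroup generated by conjugation, with each block identified as an imitator.
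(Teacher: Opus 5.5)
Your proposal is correct and follows essentially the paper's route. You use the same case split on whether $\sqrt p\in\QQ(\zeta)$, and in Case~1 you cut the set $\Phi_n\cup-\Phi_n$ along $\Phi_n=\Phi_n^+\sqcup\overline{\Phi_n^+}$. In Case~2 you take the two halves $\{\chi(a)\zeta^a\}$ and $\{\chi(-a)\zeta^{-a}\}$ over $\zeta^a\in\Phi_n^+$, which are either disjoint or identical. Your 2-to-1 collapse under $a\mapsto a+\tfrac{n}{2}$ is exactly the paper's exceptional case $4\mid n$, $\chi(1-\tfrac{n}{2})=-1$.

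The one step you should state explicitly is that this collapse is all-or-nothing. For $4\mid n$ and $a$ a unit, $a+\tfrac{n}{2}\equiv a(1+\tfrac{n}{2})\pmod n$, so $\chi(a+\tfrac{n}{2})/\chi(a)=\chi(1+\tfrac{n}{2})$ is independent of $a$. Equivalently, the fibres of $\tau\mapsto\tau(\sqrt p\,\zeta)$ are cosets of a stabilizer. This is the paper's computation $\chi(-a(\tfrac{n}{2}-a))=\chi(1-\tfrac{n}{2})$.
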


\begin{proof}
Let $L=\QQ(\sqrt p,\zeta)$. Since every Galois conjugate of $\sqrt p\,\zeta$ lies in $L$, first observe that
\begin{align}
    \operatorname{Conj}_{\operatorname{Gal}(\overline{\QQ}/\QQ)}(\sqrt p\,\zeta)=\big\{\tau(\sqrt p\,\zeta):\tau\in\operatorname{Gal}(L/\QQ)\big\} \qquad \text{(at least, as sets)}.
\end{align} 
Then we break into cases.

{\bf Case 1: $\sqrt p \notin \QQ(\zeta)$}. \\ 
Since $\sqrt p\notin\QQ(\zeta)$, we have that $\QQ(\sqrt p)\cap\QQ(\zeta)=\QQ$, so 
\[
\operatorname{Gal}(L/\QQ)\cong\operatorname{Gal}(\QQ(\sqrt p)/\QQ)\times\operatorname{Gal}(\QQ(\zeta)/\QQ).
\] 
This means that the automorphisms in $\mathrm{Gal}(L/\QQ)$ are precisely those of the form $(\rho,\tau_a)$, where
$\rho(\sqrt p)=\pm\sqrt p$ and 
$\tau_a(\zeta)=\zeta^a$ for $a\in(\ZZ/n\ZZ)^\times$. 
Hence
\begin{align}
    \frac{1}{\sqrt p} \operatorname{Conj}_{\operatorname{Gal}(\overline{\QQ}/\QQ)}(\sqrt p\,\zeta)
    &= \frac{1}{\sqrt p} \Big\{\tau(\sqrt p\,\zeta):\tau\in\operatorname{Gal}(L/\QQ)\Big\} \\
    &= \Big\{ \pm \zeta^a : a\in(\ZZ/n\ZZ)^\times \Big\} \\
    &= \Phi_n \cup -\Phi_n \\
    &= 
    \begin{cases}
        \Phi_n \sqcup -\Phi_n & \text{if } 4 \nmid n \\
        \Phi_n & \text{if } 4 \mid n.
    \end{cases}
\end{align}

This implies the desired result since $\Phi_n$ is a finite disjoint union of $\Phi_n^+$-imitators; namely
\begin{align}
    \Phi_n = 
    \begin{cases}
        \{ z : z \in \Phi_n^+\} 
        & \text{if } n=1,2 \\
        \{z : z \in\Phi_n^+\}\sqcup\{z^{-1} : z \in \Phi_n^+\}
        & \text{if } n \ge 3.
    \end{cases} 
\end{align}

\textbf{Case 2: $\sqrt p \in \QQ(\zeta)$.}\\ 
In this case, we have
\[
\operatorname{Gal}(L/\QQ)
=
\operatorname{Gal}(\QQ(\zeta)/\QQ),
\] 
and so the automorphisms in $\mathrm{Gal}(L/\QQ)$ are precisely those of the form $\tau_a$, where
$\tau_a(\zeta)=\zeta^a$ for $a\in(\ZZ/n\ZZ)^\times$. Now, each of these automorphisms must send $\sqrt p$ to $\pm\sqrt p$. This then implies that 
$\tau_a(\sqrt p)=\chi(a)\sqrt p$, where $\chi$ is some quadratic Dirichlet character modulo $n$.

Consequently,
\begin{align}
    &\frac{1}{\sqrt p} \operatorname{Conj}_{\operatorname{Gal}(\overline{\QQ}/\QQ)}(\sqrt p\,\zeta) \\
    &=
    \frac{1}{\sqrt p} \Big\{\tau(\sqrt p\,\zeta):\tau\in\operatorname{Gal}(L/\QQ)\Big\} \\
    &=
    \frac{1}{\sqrt p} \Big\{\tau_a(\sqrt p) \tau_a(\zeta) : a \in (\ZZ/n\ZZ)^{\times}\Big\} \\
    &=
    \Big\{\chi(a)\,\zeta^a : a \in (\ZZ/n\ZZ)^{\times}\Big\} \\
    &=
    \Big\{\chi(a)\,\zeta^a, \chi(n-a) \zeta^{n-a} : a \in (\ZZ/n\ZZ)^{\times},\ 0 < a < \tfrac{n}{2} \Big\} \\
    &=
    \Big\{\chi(a)\,\zeta^a : a \in (\ZZ/n\ZZ)^{\times},\ 0 < a < \tfrac{n}{2} \Big\} 
    \cup 
    \Big\{\chi(-a) \zeta^{-a} : a \in (\ZZ/n\ZZ)^{\times},\ 0 < a < \tfrac{n}{2} \Big\}
    \\ 
    &= 
    \begin{cases}
        \Big\{\chi(a)\,\zeta^a : a \in (\ZZ/n\ZZ)^{\times},\ 0 < a < \tfrac{n}{2} \Big\}
        &\hspace{-45mm}
        \quad\text{if } 4\mid n,\ \chi(1-\tfrac{n}{2})=-1,  \\
        \Big\{\chi(a)\,\zeta^a : a \in (\ZZ/n\ZZ)^{\times},\ 0 < a < \tfrac{n}{2} \Big\} 
        \sqcup
        \Big\{\chi(-a)\,\zeta^{-a} : a \in (\ZZ/n\ZZ)^{\times},\ 0 < a < \tfrac{n}{2} \Big\}
        \label{eqn:temp-conj-formula-disjoint-union}
        \\
        &\hspace{-17.5mm} \text{otherwise.}  
    \end{cases}  
\end{align}
Here, the last step follows from the equivalence
\begin{align} \label{eqn:temp-twisted-roots-of-unity-overlap-equiv-condition}
    &\quad\ \ \,  \chi(a) \zeta^a = \chi(-a') \zeta^{-a'}  \\
    &\text{iff}\quad
    \chi(-aa') =\zeta^{a+a'}  
    & \text{(since $\chi$ is quadratic)} \\
    &\text{iff}\quad
    \chi(-aa') = -1
    \text{ and }
    a+a' = \tfrac{n}{2} 
    &\text{(since $\chi$ is quadratic and $0 < a,a' < \tfrac{n}{2}$)}
    \\
    &\text{iff}\quad
    4 \mid n,\ \chi\lrp{-a(\tfrac{n}{2} - a)}=-1, \text{ and } 
    a' = \tfrac{n}{2}-a 
    &\text{(since $\tfrac{n}{2} - a \in (\ZZ/n\ZZ)^\times$ only when $4 \mid n$)} 
    \\
    &\text{iff}\quad
    4 \mid n,\ \chi\lrp{1-\tfrac{n}{2}}=-1, \text{ and } 
    a' = \tfrac{n}{2}-a. \\
    & &
    \hspace{-80mm}
    \Big(\text{since $\chi(-a(\tfrac{n}{2}-a)) = \chi(a^2 - a^2 \tfrac{n}{2} + a^2 \tfrac{n}{2} - a \tfrac{n}{2}) = \chi(a^2 (1-\tfrac{n}{2}) + \tfrac{a(a-1)}{2}n) = \chi(1 - \tfrac{n}{2})$}\Big). 
\end{align}
Also, note that all the elements are distinct within each set of \eqref{eqn:temp-conj-formula-disjoint-union}, since $\zeta^{a-a'} \ne \pm 1$ for $a \ne a'$. 

Finally, observe that $\big\{\zeta^a : a \in (\ZZ/n\ZZ)^{\times},\ 0 < a < \tfrac{n}{2} \big\}$ is a $\Phi_n^+$-imitator since
\begin{align}
    \Big\{ \big(\zeta^a\big)^{\mathrm{sgn}\,\Im(\zeta^a)} :  a \in (\ZZ/n\ZZ)^{\times},\ 0 < a < \tfrac{n}{2} \Big\} = \Phi_n^+.\label{eqn:temp-set-equal-to-Phi_n+} 
\end{align}
Hence $\big\{\chi(a)\,\zeta^a : a \in (\ZZ/n\ZZ)^{\times},\ 0 < a < \tfrac{n}{2} \big\}$ 
and 
$\big\{\chi(-a)\,\zeta^{-a} : a \in (\ZZ/n\ZZ)^{\times},\ 0 < a < \tfrac{n}{2} \big\}$ 
are also $\Phi_n^+$-imitators, so \eqref{eqn:temp-conj-formula-disjoint-union}
yields the desired result.
\end{proof}

Finally, in Lemma \ref{lem:cyclotomic_obstruction}, we show that $\Phi_n^+$-imitators cannot be equidistributed via $d\hat \mu_p(\theta)$.
\begin{lemma}\label{lem:cyclotomic_obstruction}
    Fix a prime $p$ and let $\{L_N\}_{N\ge 1}$ be a sequence of multisets contained in the unit circle $\{e^{i\theta} : \theta \in [0,2\pi]/(0\sim2\pi)\}$ that is $d\hat \mu_p(\theta)$-equidistributed with error bound
    \begin{equation}
        \left| \frac{1}{\# L_N} \sum_{e^{i\theta} \in L_N} g( \theta) \ -\  \int_{0}^{2\pi} g(\theta) \,d\hat{\mu}_p(\theta) \right| \le C(\log N) \cdot  \frac{\frac{1}{2} \delta(g)\log p}{\log N}. 
        \label{eqn:main-thm-for-general-function-g-restated-forr-supersingularity}
    \end{equation}
    for some positive real-valued function $C$ (c.f. Proposition \ref{prop:p-Frob}). Then $L_N$ cannot be written as a multiset union of $\Phi_n^+$-imitators for 
    $$
    N >
    \begin{cases}
         2^{150 \, C(\log N)} 
        & \text{ if } p=2, \\
        p^{ 32\frac{p}{p-2} \, C(\log N)}
        & \text{ if } p \geq 3.
    \end{cases}
    $$.
\end{lemma}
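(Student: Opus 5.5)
The plan is to find a single test function $g$, built from arcs of the circle, that every multiset union of $\Phi_n^+$-imitators must charge substantially, while $d\hat\mu_p$ charges it very little. Plugging this $g$ into \eqref{eqn:main-thm-for-general-function-g-restated-forr-supersingularity} then forces $\log N \ll C(\log N)\,\delta(g)\log p$, which is the claimed bound.

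\textbf{Step 1: reduce to symmetric test functions.} I will restrict to test functions $g$ that are invariant under $\theta\mapsto-\theta$ and $\theta\mapsto\pi-\theta$, i.e.\ depending only on $(|\cos\theta|,|\sin\theta|)$. By the second description in Definition~\ref{def:phi-n-plus-imitators}, any $\Phi_n^+$-imitator $\Psi_n$ then satisfies
\begin{align}
\sum_{e^{i\theta}\in\Psi_n} g(\theta)=\sum_{e^{i\theta}\in\Phi_n^+} g(\theta).
\end{align}
So if $L_N=\bigsqcup_j \Psi_{n_j}$, the left-hand average in \eqref{eqn:main-thm-for-general-function-g-restated-forr-supersingularity} is a convex combination, weighted by $\#\Phi_{n_j}^+$, of the averages of $g$ over the genuine sets $\Phi_{n_j}^+$. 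It therefore suffices to find a symmetric $g$ and a constant $\eta_p>0$ with
\begin{align}
\frac{1}{\#\Phi_n^+}\sum_{e^{i\theta}\in\Phi_n^+}g(\theta)\ \ge\ \int_0^{2\pi}g\,d\hat\mu_p+\eta_p \qquad\text{for every } n\ge1.
\end{align}
Then the error bound gives $\eta_p\le C(\log N)\tfrac12\delta(g)\log p/\log N$, that is, $N\le p^{\delta(g)C(\log N)/(2\eta_p)}$. The constants $150$ and $32\tfrac{p}{p-2}$ should come out as $\delta(g)/(2\eta_p)$.

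\textbf{Step 2: choose $g$.} I plan to use the indicator of a symmetric union of short arcs, which has bounded total variation $\delta(g)$.
\begin{itemize}
\item One set of arcs has half-width $\varepsilon$ around $\theta\in\{0,\pi\}$. There the density of $d\hat\mu_p$ is $\asymp\sin^2\theta$, so these arcs have $\hat\mu_p$-mass $O_p(\varepsilon^3)$.
\item The other arcs surround the finitely many points $\pm\zeta^{\pm1}$ with $\zeta\in\Phi_n$ and $\varphi(n)\le K$, for example $\pm i$ (from $n=4$), $e^{\pm i\pi/3}$ (from $n=3,6$), and so on. Each of these arcs has $\hat\mu_p$-mass $O(\|\mu_p\|\varepsilon)$.
\end{itemize}
For the small orders $\varphi(n)\le K$, the average of $g$ over $\Phi_n^+$ is exactly $1$. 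For the large orders I will bound the fraction of $\Phi_n^+$ lying within $\varepsilon$ of $\{\pm1\}$ from below by about $2\varepsilon/\pi$, minus a discrepancy error. That error is controlled via Erd\H{o}s--Tur\'an using Ramanujan sums $c_n(k)=\sum_{\zeta\in\Phi_n}\zeta^k$, with $|c_n(k)|\le\gcd(n,k)$, so it is $O(\sigma_0(n)\log(1/\varepsilon)/\varphi(n))$. Choosing $K$ large enough in terms of $\varepsilon$ makes this error at most $\varepsilon/\pi$. This yields the uniform lower bound $\gtrsim\varepsilon/\pi$, while $\int g\,d\hat\mu_p=O(\varepsilon^3)+O(\#S_K\,\|\mu_p\|\varepsilon)$.

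\textbf{Main obstacle.} The delicate part is balancing these competing terms. The arcs around the small-order points cost $\hat\mu_p$-mass proportional to $\varepsilon$, which is the same order as the gain $\varepsilon/\pi$ coming from the large orders. So I expect to shrink those arcs to width $\varepsilon^2$ or smaller, not $\varepsilon$, so that their cost is negligible, and then optimise $\varepsilon$ and $K$ numerically to obtain an explicit $\eta_p$. The factor $\tfrac{p}{p-2}$ should come from $\|\mu_p\|\approx\tfrac{p-1}{p\pi}$ together with the sizes of the denominators $(p^{1/2}+p^{-1/2})^2-4\cos^2\theta$. The case $p=2$, where the density is largest, would be treated separately with its own choice of arcs.
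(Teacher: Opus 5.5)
\textbf{Verdict: genuine gap.} Your Step 1 is exactly the reduction the paper uses. With $g$ invariant under $\theta\mapsto-\theta$ and $\theta\mapsto\pi-\theta$, each imitator has the same $g$-sum as $\Phi_n^+$, so the $L_N$-average is a convex combination of $\Phi_n^+$-averages. Step 2, once the small-order arcs are shrunk to actual points, would give a \emph{qualitative} bound $N\le p^{c_pC(\log N)}$ for some constant $c_p$. Arcs of width $\varepsilon^2$ are not enough, because your exceptional set turns out to have $\gg\varepsilon^{-2}$ points.

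The lemma, however, asserts the specific exponents $150$ and $32\frac{p}{p-2}$, and your construction cannot reach them. The claim that they ``should come out as $\delta(g)/(2\eta_p)$'' is where the proposal fails.
\begin{itemize}
\item Take $n$ a large prime. Then $\Phi_n^+$ equidistributes on $[0,\pi]$ and $\int g\,d\hat\mu_p\ge0$, so your gain satisfies $\eta_p\lesssim 2\varepsilon/\pi$.
\item The two arcs at $0,\pi$ alone give $\delta(g)\ge4$. Hence $\delta(g)/(2\eta_p)\gtrsim\pi/\varepsilon$, which forces $\varepsilon\gtrsim0.1$ for large $p$.
\item Every order $3\le n<\pi/\varepsilon$ has all its primitive roots at angular distance at least $\pi/n>\varepsilon$ from $\pm1$. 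Its $\Phi_n^+$-average of the main arcs is therefore $0$, so your $K$ must include all these orders.
\item Consequently $\#S_K\gg\sum_{n<\pi/\varepsilon}\varphi(n)\asymp\varepsilon^{-2}$. Each of these points costs $2$ in total variation even as a degenerate arc, so the resulting exponent is orders of magnitude larger than $32\frac{p}{p-2}$.
\end{itemize}

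\textbf{What you are missing.} Use a \emph{smooth} main term whose average over every $\Phi_n$ is computable exactly, instead of an indicator whose gain is only $O(\varepsilon)$ and needs an Erd\H{o}s--Tur\'an estimate. The paper takes $g_0(\theta)=-2\cos(2\theta)\chi_{I_0}(\theta)$, i.e.\ $2-x^2$ in the variable $x=2\cos\theta$. It deletes the finitely many points of orders $n\in B_0=\{3,4,6\}$; this is the same zero-measure deletion device you arrive at.
\begin{itemize}
\item Its $\hat\mu_p$-integral is the explicit moment $1-\frac1p$.
\item Its average over any $\Phi_n^+$ is $0$ for $n\in B_0$, and otherwise equals $-\frac{2}{\varphi(n)}\frac{\gcd(n,4)}{\gcd(n,2)}\mu\big(\frac{n}{\gcd(n,2)}\big)$, computed from $\sum_{\zeta\in\Phi_n}\zeta^2$. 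This is at most $\frac12$ for every $n$.
\item So imitator averages lie uniformly \emph{below} the integral by $\big(1-\frac1p\big)-\frac12=\frac{p-2}{2p}$. With $\delta(g_0)=32$ this gives exactly $N\le p^{32\frac{p}{p-2}C(\log N)}$. The factor $\frac{p}{p-2}$ comes from this gap, not from $\|\mu_p\|$.
\item For $p=2$ the gap vanishes because the value $\frac12$ is attained at $n=5,10$. Deleting those orders too, with $B_0=\{3,4,5,6,10\}$, lowers the maximum to $\frac13$ with $\delta(g_0)\le50$. The gap $\frac16$ then gives the exponent $150$.
\end{itemize}
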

\begin{proof}
    We first prove the desired result for the case of $p \ge 3$. 
    Let 
    \begin{align}
        B_0 := 
        \lrcb{
            m,2m : m=3
        } 
        \sqcup\, 
        \lrcb{
            4
        }
        = \lrcb{3,4,6},
    \end{align}
    and define the function
    \begin{align} \label{eqn:temp-g0-def}
        g_0(x) := -2 \cos(2\theta) \chi_{I_0}(\theta), 
        \qquad \text{where} \qquad
        I_0 := ([0,2\pi]/(0\sim2\pi)) \setminus\bigcup_{n \in B_0} \big\{\arg(\zeta) : \zeta \in \Phi_n\big\}.
    \end{align}
    Note that $B_0$ is constructed in this way so that for any primitive $n$-th root of unity $\zeta \in \Phi_n$,
    \begin{align} 
        \arg \pm \zeta^{\pm 1} \in I_0 
        \qquad &\text{iff} \qquad  
        \arg \zeta \in I_0 
        \qquad \text{iff} \qquad
        n \notin B_0, \\
        \intertext{and so}
        g_0\lrp{\arg \pm \zeta^{\pm1} }
        &= g_0\lrp{\arg \zeta }
        = -2 \cos\!\big(2 \arg \zeta\big) \mathds{1}_{n \notin B_0}  
        \label{eqn:temp-formula-for-g0(2Re(+-zeta^+1))}.
    \end{align}
    
    Now, the total variation of $g_0(\theta)$ over $[0,2\pi]/(0\sim2\pi)$ is given by
    \begin{align}
        \delta(g_0) &= \delta\lrp{\big[\theta \mapsto -2 \cos(2\theta)\big]} + \sum_{\theta \notin I_0} 2 \cdot \lrabs{-2 \cos(2\theta)} = 32.
    \end{align}
    Hence by the $d\hat\mu_p(\theta)$-equidistribution of $\{L_N\}_{N\ge 1}$, we have that 
    \begin{align} \label{eqn:temp_integral_2-x2_}
        \left| \frac{1}{\# L_N} \sum_{e^{i\theta} \in L_N} g_0\!\left( \theta  \right) \ -\  \int_{0}^{2\pi} g_0(\theta) \,d\hat\mu_p(\theta) \right| \le C(\log N) \frac{16\log p}{\log N}.
    \end{align}
    
    Now, the integral in \eqref{eqn:temp_integral_2-x2_} is given by
    \begin{align} 
        \int_{0}^{2\pi} g_0(\theta) d\hat \mu_p(\theta) 
        &= \int_{0}^{2\pi} -2 \cos(2 \theta)\cdot \frac{p + 1}{\pi}  \frac{\sin^2 \theta}{\left( p^{1/2} + p^{-1/2} \right)^2 - 4 \cos^2 \theta} \, d \theta \\
        &= 4\int_{0}^{\pi/2} -2 \cos(2\theta) \cdot \frac{p + 1}{\pi}  \frac{\sin^2 \theta}{\left( p^{1/2} + p^{-1/2} \right)^2 - 4 \cos^2 \theta} \, d \theta \\
        &= \frac{p+1}{\pi} \int_{0}^{\pi/2}
        \frac{-8 \cos(2 \theta)\sin^2 \theta}{c-4\cos^2\theta} d\theta \qquad \text{where $c = \left( p^{1/2} + p^{-1/2} \right)^2$} \\
        &= 
        \frac{p+1}{\pi}
        \lrb{
            (c - 2)
            \sqrt{\tfrac{c - 4}{c}}  
            \arctan\left(\sqrt{\tfrac{c}{c-4}} \tan\left({\theta}\right)\right) - \sin(2\theta) - (c-4) \theta
        }_{\theta=0}^{\theta=\pi/2} \\
        &= \frac{p+1}{\pi} \lrb{
            (c-2) \sqrt{\tfrac{c-4}{c}} \frac{\pi}{2} - (c-4) \frac{\pi}{2} 
        } \\
        &= \frac{p+1}{2} \lrb{
            \lrp{p+p^{-1}} \sqrt{\frac{\left( p^{1/2} - p^{-1/2} \right)^2}{\left( p^{1/2} + p^{-1/2} \right)^2}}  - \left( p^{1/2} - p^{-1/2} \right)^2
        } \\
        &= \frac{p+1}{2} \lrb{\frac{2(p-1)}{p(p+1)}} \\
        &= 1 - \frac{1}{p}.
    \end{align}

    On the other hand, suppose that $L_N$ can be written as a multiset union of $\Phi_n^+$-imitators, say
    $
        L_N = \bigcup_{n \in D_N}  \Psi_n
    $. 
    Then the summation in \eqref{eqn:temp_integral_2-x2_} is equal to the weighted average
    \begin{align}
        \frac{1}{\# L_N} \sum_{\zeta \in L_N} g_0(\arg \zeta) 
        =
        \sum_{n \in D_N} \lrp{\frac{\# \Psi_n}{\# L_N}} \cdot 
        \frac{1}{\# \Psi_n}  \sum_{\zeta \in \Psi_n}  g_0\lrp{\arg\zeta}.
    \end{align}
    Moreover, each of these terms in this weighted average is bounded above by 
    \begin{align}
        \frac{1}{\# \Psi_n}  \sum_{\zeta \in \Psi_n}  g_0\lrp{\arg \zeta} 
        &=
        \frac{1}{\# \Phi_n^+}  \sum_{\zeta \in \Phi_n^+}  g_0\lrp{\arg \xi(\zeta)} 
        & \hspace{-25mm}\text{(where $\xi(\zeta) = \pm \zeta^{\pm 1}$ for each $\zeta \in \Phi_n^+$)}
        \\
        &=
        \frac{1}{\# \Phi_n^+} \sum_{\zeta \in \Phi_n^+} 
        -2 \cos(2 \arg \zeta)
        \mathds{1}_{n \notin B_0} 
        & \text{(by \eqref{eqn:temp-formula-for-g0(2Re(+-zeta^+1))})}
        \\
        &=
        \frac{-2 \cdot \mathds{1}_{n \notin B_0}}{\# \Phi_n^+}
        \sum_{e^{i\theta} \in \Phi_n^+} \cos(2\theta) 
        \\
        &= 
        \frac{-2 \cdot \mathds{1}_{n \notin B_0}}{\# \Phi_n} \,\sum_{e^{i\theta} \in \Phi_n} \cos(2\theta)
        \\
        &= \frac{-2 \cdot \mathds{1}_{n \notin B_0}}{\phi(n)} \mathrm{Re}\lrb{\sum_{\zeta \in \Phi_n} \zeta^2 }  \\
        &= \frac{-2 \cdot \mathds{1}_{n \notin B_0}}{\phi(n)}  \mathrm{Re}\lrb{
            \frac{\gcd(n,4)}{\gcd(n,2)} \sum_{\zeta \in \Phi_{n/\!\gcd(n,2)}} \zeta
        } \\
        &= \frac{-2 \cdot \mathds{1}_{n \notin B_0}}{\phi(n)} \frac{\gcd(n,4)}{\gcd(n,2)}
            \,\mu\lrp{\frac{n}{\gcd(n,2)}} \\
        &\le \frac{1}{2}, \label{eqn:temp-upper-bound-on-Psi_n-summation-expressions} 
    \end{align}
    implying that the summation in \eqref{eqn:temp_integral_2-x2_} is also bounded above by $\frac{1}{2}$. Note that the last step \eqref{eqn:temp-upper-bound-on-Psi_n-summation-expressions} follows from the fact that
    $\phi(n) \ge 0.407\, n^{7/8} \ge 8$ for $n \ge 31$ (by the technique of \cite{ross-explicit-bounds}), and from direct computation for $1 \le n \le 30$.

    Hence for all $L_N$ that can be written as a multiset union of $\Phi_n^+$-imitators, we have by \eqref{eqn:temp_integral_2-x2_} that $\big(1-\frac{1}{p}\big) - \frac{1}{2} = \frac{p-2}{2p} \le C(\log N)\frac{16 \log p}{\log N}$. This then implies that
    $
        N \le p^{32 \frac{p}{p-2} \, C(\log N)}
    $,
    verifying the desired result for $p \ge 3$.

    For the case of $p=2$, the only modification needed is that one should instead choose $B_0 := \lrcb{m,2m : m=3,5} \sqcup \lrcb{4} = \lrcb{3,4,5,6,10}$. For this choice of $B_0$, the total variation of $g_0$ will be given by $\delta(g_0) = 49.888... \le 50$, and the upper bound \eqref{eqn:temp-upper-bound-on-Psi_n-summation-expressions} will become $\frac{1}{3}$.
    Hence for all $L_N$ that can be written as a multiset union of $\Phi_n^+$-imitators, we have by an identical argument as for the case of $p \ge 3$ that $\big(1-\frac{1}{2}\big) - \frac{1}{3} = \frac{1}{6} \le C(\log N) \frac{25 \log 2}{\log(N)}$. This then implies that
    $
        N \le 2^{150 C(\log N)}
    $,
    verifying the desired result for $p = 2$.
\end{proof}

\subsection{Upper bounds for \texorpdfstring{$\log_p M(p)$}{log p M(p)}}\label{subsection:explicit-bounds-on-M}

In this subsection, we find explicit upper bounds for $\log_p M^{\nw, \operatorname{AL}}(p)$ and its analogs, as defined in Definition \ref{defn:M(p)}. This is accomplished via explicit bounds on $C_p(N)$ and its analogs, which will be computed in Appendix \ref{section:explicit-form-of-function}.

First, we have the following corollary of Proposition \ref{prop:vertical-supersingularity}.

\begin{corollary}\label{cor:E(p)-relation}
    Let $E$ be a real number such that
    \begin{align}
        X > C_p\left( X \cdot \log p  \right) \cdot 
        \begin{cases}
            150 & \text{if } p = 2\\
            32 \frac{p}{p - 2} 
            & \text{if }
            p \ge 3
        \end{cases} \label{eqn:E(p)-relation}
    \end{align}
    for all $X > E$. Then $\log_p M(p) \leq E.$
\end{corollary}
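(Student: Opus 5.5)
The plan is a direct contrapositive argument from Proposition \ref{prop:vertical-supersingularity}, in its analogue for $J_0(N)$ with the family $C_p$. Write $K_p := 150$ if $p=2$ and $K_p := 32\frac{p}{p-2}$ if $p\ge 3$. Suppose $N$ is a level with $p \in \operatorname{SS}(J_0(N))$. Necessarily $p \nmid N$, since otherwise $p$ is a prime of bad reduction for $J_0(N)$ (for $N>1$ with $g(X_0(N))>0$; trivial levels with $J_0(N)=0$ are harmless or excluded by convention). Then Proposition \ref{prop:vertical-supersingularity} gives
\[
N \le p^{K_p\, C_p(\log N)} .
\]
Set $X := \log_p N$, so that $\log N = X\log p$. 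The inequality becomes
\[
X \le K_p\, C_p(X\log p).
\]
This is exactly the negation of \eqref{eqn:E(p)-relation} at the value $X$.

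By hypothesis, \eqref{eqn:E(p)-relation} holds for every $X>E$. So any supersingular level must satisfy $\log_p N = X \le E$. Taking the maximum over all such $N$, and using the convention $\max\varnothing = 0$ when no such $N$ exists (the bound is then trivial if $E\ge 0$; if $E<0$, the hypothesis applied at small positive $X$ still excludes all $N\ge 2$), we get $\log_p M(p)\le E$.

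I expect no step to be a genuine obstacle; the argument is a change of variables. The points needing care are, first, that the analogue of Proposition \ref{prop:vertical-supersingularity} for $J_0(N)$ is stated with the same exponent constants $150$ and $32\frac{p}{p-2}$, with $C_p^{\nw,\operatorname{AL}}$ replaced by $C_p$. Second, the argument of $C_p$ must be $\log N = X\log p$ and not $X$ itself. Third, one must confirm that levels divisible by $p$ never contribute to $M(p)$, since supersingularity is defined only at primes of good reduction. The same argument, with $C_p$ replaced by $C_p^{\nw}$, $C_p^{\operatorname{AL}}$, or $C_p^{\nw,\operatorname{AL}}$, yields the analogous bounds for $M^{\nw}(p)$, $M^{\operatorname{AL}}(p)$, and $M^{\nw,\operatorname{AL}}(p)$.
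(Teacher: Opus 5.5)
Your proposal is correct and is essentially the paper's argument. The paper applies Proposition \ref{prop:vertical-supersingularity} directly to $N = M(p)$, obtaining $\log_p M(p) \le K_p\, C_p(\log_p M(p)\cdot \log p)$, and concludes from the hypothesis that $\log_p M(p) \le E$. You make the same substitution $X = \log_p N$ for every supersingular level $N$. This gives the same conclusion and has the minor advantage of not presupposing that the maximum exists.
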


\begin{proof}
    Note that there must exist such a value of $E$ since $C_p(X \log p)$ is universally bounded. Note that Proposition \ref{prop:vertical-supersingularity} guarantees that
    $$\log_p M(p) \leq C_p \left( \log_p M(p) \cdot \log p \right) \cdot \begin{cases}
            150 & \text{if } p = 2\\
            32 \frac{p}{p - 2} 
            & \text{if }
            p \ge 3
        \end{cases}.$$
    Hence by assumption, $\log_p M(p)$ cannot be strictly greater than $E.$
\end{proof}

\begin{remark}\label{rmk:extension-of-corollary-formula-for-M}
For any prime $p,$ define
\begin{align}
    M^{\text{pm}}(p) &:= \max \{ \text{primes $P$} : p \in \operatorname{SS}\!\left( J_0(P) \right) \}  \\
    M^{\text{pp}}(p) &:=  \max \left\{ \text{prime powers $Q$}  : \text{$p \in \operatorname{SS}\!\left(J_0(Q) \right)$}  \right\}
\end{align}
Also define $M^{\nw, \text{pp}}(p)$ and $M^{\operatorname{AL}, \text{pm}}(p)$ analogously. Note it is not necessary to define $M^{\nw, \operatorname{pm}}(p)$ or $M^{\nw, \operatorname{AL}, \operatorname{pm}}$ since for prime level $P,$ $J_0^\nw(P) \sim J_0(P)$ and $J_0^{\nw, \sigma}(P) \sim J_0^\sigma(P)$.

    Then the statement of Corollary \ref{cor:E(p)-relation} is true for any variant $M^{\text{condition}}(p)$ of $M(p)$ when $C_p(N)$ is replaced by $C_p^{\text{condition}}(N).$ This is clear from the definitions.
\end{remark}

Using a combination of Corollary \ref{cor:E(p)-relation} and the explicit forms of $C_p$ and its analogs from Appendix \ref{section:explicit-form-of-function}, we can bound any particular value of $M(p)$ and its analogs.

\begin{proposition} \label{prop:upper-bounds-on-E(p)}
    The following table contains upper bounds for $\log_p M(p)$ and its analogs for the first few primes.

    \begin{center}
    \begin{tabular}{|c||c|c|c|c|c|c|c|c|c|c|}
    \hline
       $p$ & $2$ & $3$ & $5$ & $7$ & $11$ & $13$ & $17$ & $19$ \\
    \hline
    \hline
       $\log_p M(p)$ & $134.791$ & $74.2022$ & $42.9048$ & $36.2355$ & $32.0027$ & $30.9110$ & $29.4828$ & $28.9940$  \\
    \hline
        $\log_p M^{\operatorname{pm}}(p)$ & $52.6537$ & $27.7971$ & $15.4647$ & $13.1253$ & $11.9286$ & $11.6684$ & $11.3592$ & $11.2589$  \\
    \hline
        $\log_p M^{\operatorname{pp}}(p)$ & $100.137$ & $54.3723$ & $30.8617$ & $26.1284$ & $23.2982$ & $22.5834$ & $21.6647$ & $21.3477$  \\
    \hline
    \hline
        $\log_p M^\nw(p)$ & $250.811$ & $141.545$ & $84.5551$ & $71.1082$ & $61.7564$ & $59.2759$ & $55.9947$ & $54.8340$  \\
    \hline
        $\log_p M^{\nw, \operatorname{pp}}(p)$ & $101.722$ & $55.3724$ & $31.5443$ & $26.6930$ & $23.7564$ & $23.0117$ & $22.0525$ & $21.7209$  \\
    \hline
    \hline
    $\log_p M^{\operatorname{AL}}(p)$ & $1554.47$ & $929.611$ & $597.117$ & $497.170$ & $415.545$ & $393.093$ & $362.847$ & $351.995$  \\
    \hline
    $\log_p M^{\nw, \operatorname{AL}}(p)$ & $1585.39$ & $949.124$ & $610.442$ & $508.190$ & $424.486$ & $401.451$ & $370.413$ & $359.275$  \\
    \hline
    $\log_p M^{\operatorname{AL}, \operatorname{pm}}(p)$ & $168.208$ & $93.7801$ & $55.1558$ & $46.3924$ & $40.4985$ & $38.9565$ & $36.9339$ & $36.2238$  \\
    \hline
    \end{tabular}
\end{center}
\end{proposition}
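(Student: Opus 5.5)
The table is a finite numerical computation, so the plan is to apply Corollary \ref{cor:E(p)-relation}, together with its variants from Remark \ref{rmk:extension-of-corollary-formula-for-M}, separately for each prime $p\le 19$ and each family $C_p^{\text{condition}}$. For a fixed pair $(p,\text{condition})$, write $K_p$ for the constant $150$ (if $p=2$) or $32\frac{p}{p-2}$ (if $p\ge 3$). It then suffices to exhibit a number $E$ such that
\begin{align}
    X > K_p \cdot C_p^{\text{condition}}(X\log p) \qquad \text{for all } X>E.
\end{align}
Taking $E$ to be essentially the largest solution of $X = K_p\, C_p^{\text{condition}}(X\log p)$, rounded up, gives the entry in the table.

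\textbf{Step 1: obtain the explicit forms.} For each family I will take the formulas from Appendix \ref{section:explicit-form-of-function}. These are built from the ingredients listed in the notation section: the bounds \eqref{eqn:robin-omega-bound} on $\omega$, \eqref{eqn:nicolas-robin-sigma-bound} on $\sigma_0$, \eqref{eqn:Sole-Planat-psi-bound} on $\psi$, \eqref{eqn:bound-on-psi-new} on $\psi^\nw$, and $\|\mu_p\|$ from Definition \ref{def:mu-p}. In each case I expect $C_p^{\text{condition}}(X)$ to have the shape $a\|\mu_p\|$ (with $a=2$, $\frac32$ or $1$) plus correction terms. These corrections come from the trace-formula error, the dimension lower bound and the counting of divisors, and they decay in $X$. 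Each one should be a monotone decreasing explicit function once $X$ is past a modest threshold, such as the thresholds $n\ge 31$ and $n\ge 223092870$ in \eqref{eqn:Sole-Planat-psi-bound} and \eqref{eqn:bound-on-psi-new}.

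\textbf{Step 2: verify for all large $X$ by monotonicity.} I will show that $X\mapsto X - K_p\,C_p^{\text{condition}}(X\log p)$ is increasing for $X$ beyond the relevant threshold. The reason is that the left side grows linearly while $C_p^{\text{condition}}$ is nonincreasing. Once this is known, a single numerical evaluation at the candidate $E$ certifies the inequality for every $X>E$. For values of $X$ below the validity threshold of the arithmetic bounds, the appendix formula should either already be valid or dominated by a cruder bound; in that range I will simply check that the candidate $E$ already lies above it.

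\textbf{Step 3: compute the crossing points.} For each $p\in\{2,3,5,7,11,13,17,19\}$ and each of the eight variants, I will solve for the crossing point numerically by bisection and record it, rounded up, to the stated precision.

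I expect the main obstacle to be Step 2 for the Atkin--Lehner families. In $C_p^{\operatorname{AL}}$ and $C_p^{\nw,\operatorname{AL}}$ the number of sign patterns contributes a factor like $2^{\omega(N)}$. This decays extremely slowly and is not obviously monotone, which explains the entries around $10^3$ in the table. Here I would first bound $2^{\omega(N)}$ through \eqref{eqn:robin-omega-bound}, replacing the decaying part by the explicit monotone function $\exp\!\big(1.38407\log 2\,\frac{\log N}{\log\log N}\big)/N^{c}$. Monotonicity can then be checked by differentiating in $\log N$, and care is needed because $\log\log$ is small in the relevant range.
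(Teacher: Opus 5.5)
Your proposal is correct and follows essentially the paper's route: apply Corollary~\ref{cor:E(p)-relation} and its variants from Remark~\ref{rmk:extension-of-corollary-formula-for-M} to the explicit forms of $C_p^{\text{condition}}$ in Appendix~\ref{section:explicit-form-of-function}, use that these functions are decreasing for large $X$ (which the paper asserts via an analogue of the monotonicity lemma in its earlier work), and numerically solve $X = K_p\, C_p^{\text{condition}}(X\log p)$, checking that the resulting $E$ has $E\log p$ above the validity threshold of the formula. Your worry about $2^{\omega(N)}$ in the Atkin--Lehner families is already handled in the appendix: $F_X(K(X))$ absorbs $2^{\omega(N)}\sigma_0(N)^2$ through the Robin and Nicolas--Robin bounds and is valid for all $N \ge e^X$, so there a single-point check at $X = E\log p$ suffices without a separate monotonicity argument.
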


\begin{remark}\label{rmk:C-decreasing-modification}
    While $C_p(X)$ and its variants $C_p^{\text{condition}}(X)$ do not necessarily decrease in $p$ for fixed values of $X= \log N$, the functions $\log_p M(p)$ and $\log_p M^{\text{condition}}(p)$ do seem to decrease. This fact is difficult to prove formally in general but easy to exploit. For example, it appears from the above table that $\log_p M(p)< 30$ for all $p \geq 17.$ This can be verified by replacing the function $C_p(X)$ with a slightly larger function $C(X)$ independent of $p$ such that
    \begin{enumerate}
        \item $C_p(X) \leq C(X)$ for large $p,$
        \item $C(X)$ is decreasing for large enough $X.$
    \end{enumerate}
    For large enough $p$, one then computes an upper bound for $\log_p M(p)$ via Corollary \ref{cor:E(p)-relation} using $C(X)$ in place of $C_p(X)$; this bound is guaranteed to decrease in $p.$ Therefore, it only takes a finite check to verify that $\log_p M(p) < 30$ for all $p \geq 17.$
\end{remark}

Using the approach in Remark \ref{rmk:C-decreasing-modification}, the following bounds then follow from a finite computation.

\begin{corollary} \label{cor:M(p)-for-sufficiently-large-p}~ \\
    If $J_0(N)$ is supersingular modulo $p,$ then we have the following bounds.
    \begin{enumerate}
        \item If $p \geq 97$, then $N \leq p^{25}$.
        \item If $p \geq 45486901$, then $N \leq p^{20}$.
        \item If $p \geq 29,$ all prime divisors of $N$ are less than $p^{11}$.
        \item If $p \geq 37,$ all prime power divisors of $N$ are less than $p^{20}$.
        \item If $p \geq 12451,$ all prime power divisors of $N$ are less than $p^{17}$.
    \end{enumerate}
    If $J_0^\nw(N)$ is supersingular modulo $p,$ then we have the following bounds.
    \begin{enumerate}
        \item If $p \geq 569$, then $N \leq p^{40}$.
        \item If $p \geq 5072004923$, then $N \leq p^{30}$.
        \item If $p \geq 47$ and $N$ is a prime power, then $N \leq p^{20}$.
        \item If $p \geq 24281$ and $N$  is a prime power, then $N \leq p^{17}$.
    \end{enumerate}
    If $J_0^\sigma(N)$ is supersingular modulo $p,$ then we have the following bounds.
    \begin{enumerate}
        \item If $p \geq 41,$ then $N \leq p^{300}$.
        \item If $p \geq 2803309010977,$ then $N \leq p^{100}$.
        \item If $p \geq 113$ and $N$ is prime, then $N \leq p^{30}.$
    \end{enumerate}
    If $J_0^{\nw, \sigma}(N)$ is supersingular modulo $p,$ then we have the following bounds.
    \begin{enumerate}
        \item If $p \geq 43,$ then $N \leq p^{300}$.
        \item If $p \geq 5159783983903,$ then $N \leq p^{100}$.
    \end{enumerate}
\end{corollary}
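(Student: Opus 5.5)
The plan is to feed Corollary \ref{cor:E(p)-relation}, together with its variants from Remark \ref{rmk:extension-of-corollary-formula-for-M}, the explicit forms of $C_p$, $C_p^{\operatorname{pm}}$, $C_p^{\operatorname{pp}}$, $C_p^\nw$, $C_p^{\nw,\operatorname{pp}}$, $C_p^{\operatorname{AL}}$, $C_p^{\operatorname{AL},\operatorname{pm}}$ and $C_p^{\nw,\operatorname{AL}}$ derived in Appendix \ref{section:explicit-form-of-function}, and then to carry out the monotone-majorant argument of Remark \ref{rmk:C-decreasing-modification}.

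Consider each of the eleven bounds of the form ``if $p \geq p_0$ then $N \leq p^{E_0}$''. By Corollary \ref{cor:E(p)-relation}, it suffices to show that for every prime $p \geq p_0$ and every $X > E_0$ we have
\[
X > C_p^{\text{condition}}(X\log p)\cdot \kappa_p,
\]
where $\kappa_p = 32\frac{p}{p-2}$. Here $\kappa_p$ is decreasing in $p$, and every $p_0$ in the statement is at least $29$.

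Step 1: read off from the appendix formulas a majorant $\widetilde C^{\text{condition}}(Y) \ge \sup_{p\ge p_0} C_p^{\text{condition}}(Y)$ that is independent of $p$ and nonincreasing in $Y$ for $Y$ beyond some explicit threshold. Such a majorant should exist because the dependence on $p$ in those formulas enters through $\|\mu_p\| \le 1/\pi$ and through lower-order terms like $p^{O(1)}/N$ or $\log\log N/\log N$. I would bound each $p$-dependent piece by its worst case over $p \ge p_0$.

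Step 2: observe that since $X\log p$ is increasing in $p$ and $\widetilde C$ is eventually decreasing, the function $X \mapsto \widetilde C(X\log p)\kappa_p$ is pointwise decreasing in $p$. So verifying the inequality at $p=p_0$ for all $X > E_0$ verifies it for all $p \ge p_0$. This turns the check into a one-variable inequality
\[
X > \kappa_{p_0}\,\widetilde C(X\log p_0) \qquad \text{for all } X > E_0,
\]
which I would establish by evaluating at $X=E_0$ and using monotonicity of both sides: the left side increases and the right side decreases.

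Step 3: handle the statements about prime divisors and prime power divisors of $N$ differently, since they are about divisors of $N$ rather than $N$ itself. If $J_0(N)$ is supersingular at $p$, then so is every isogeny factor. By the decomposition $J_0(N) \sim \prod_{M\mid N}\prod_f A_f^{\sigma_0(N/M)}$, the factor $J_0(Q)$ is contained in $J_0(N)$ for each $Q \| N$, and also for each prime $Q \mid N$, via the degeneracy maps (as in the Remark on $X_\Gamma$). Hence $p \in \operatorname{SS}(J_0(Q))$, and the $M^{\operatorname{pm}}$ and $M^{\operatorname{pp}}$ bounds apply to $Q$. The prime-power statements for $J_0^\nw$ and the prime statement for $J_0^\sigma$ apply $M^{\nw,\operatorname{pp}}$ and $M^{\operatorname{AL},\operatorname{pm}}$ directly. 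Any remaining finitely many primes between $p_0$ and the point where the majorant argument kicks in are covered by direct computation with the exact $C_p$, exactly as in the table of Proposition \ref{prop:upper-bounds-on-E(p)}.

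The main obstacle is Step 1, constructing a genuinely $p$-uniform, eventually decreasing majorant from the appendix formulas. Some terms there, such as the truncation parameters in the Eichler–Selberg or Petersson trace estimates, may grow with $p$ for fixed $\log N$. I would handle those by exploiting that they enter as $p^{a}/N^{b}$ with $N \ge p^{E_0}$, which is tiny for $E_0 \ge 17$.

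The very large thresholds, such as $p \ge 5159783983903$, indicate that in the tightest cases the limit constant $2\|\mu_p\|\kappa_p \to 64/\pi$ is nearly saturated. There the error terms must be made small purely through large $\log p$, so I would solve numerically for the minimal $p_0$ making the inequality hold at $X=E_0$.
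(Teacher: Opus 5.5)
Your proposal is correct and follows essentially the paper's own route: Corollary~\ref{cor:E(p)-relation} applied with the $p$-independent, eventually decreasing majorant of Remark~\ref{rmk:C-decreasing-modification}, a finite direct check with the exact $C_p^{\text{condition}}$ for the remaining small primes, and the prime and prime-power divisor statements obtained from the fact that $J_0(P)$ is an isogeny factor of $J_0(N)$ for $P \mid N$.

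One small quibble, which does not affect the argument: your closing heuristic is off. For instance, $100$ is far above the limiting exponent $64/\pi \approx 20.4$, so the huge thresholds come from the very slow convergence of $C_p^{\operatorname{AL}}(X)$ (and its analogs) in $X = E_0 \log p$, not from the limiting constant being nearly saturated.
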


\begin{proof}
    All these results follow from finite computations as described in Remark \ref{rmk:C-decreasing-modification}. We note that the statements about prime and prime power divisors of $N$ follow from the fact that
    $$\max \left\{ \text{$P$ prime}  \mid \text{$p \in \operatorname{SS}\!\left(J_0(N) \right)$ for some $N$ a multiple of $P$}  \right\} \leq M^{\text{pm}}(p),$$
    and likewise for $C_p^{\text{pp}}.$ This is since for any divisor $P$ of $N,$ $J_0(P)$ is a factor of $J_0(N).$
\end{proof}

Furthermore, Corollary \ref{cor:E(p)-relation} implies that
\begin{align}
    \limsup_{p \rightarrow \infty} \, \log_pM(p) &\leq 32 \, \lim_{p \rightarrow \infty} \lim_{N \rightarrow \infty} C_p(\log N) \qquad \text{by Corollary \ref{cor:E(p)-relation} and since $C_p(X) \rightarrow \lim_{X \rightarrow \infty}$}\\
    &\qquad \text{converges uniformly in $p$ by the explicit forms of $\{C_p\}_p$ in Appendix \ref{section:explicit-form-of-function}}\\
    &= 32 \, \lim_{p \rightarrow \infty} \frac{3}{2} \| \mu_p \| \qquad \text{by \eqref{eqn:Atkin-Lehner-equidistribution}}\\
    &= 32 \, \lim_{p \rightarrow \infty} \frac{3}{2} \cdot \frac{1}{\pi} \qquad \text{by Definition \ref{def:mu-p}},
\end{align}
and analogously for $M^{\text{condition}}(p),$ with $C_p$ replaced with $C_p^{\text{condition}}$.

These bounds yield the following Corollary.
\begin{corollary} \label{cor:vertical-suprsingularity-with explicit exponent}
    As $p$ grows, 
    \begin{enumerate}
        \item If $J_0(N)$ is supersingular modulo $p,$ then $N \leq p^{\frac{48}{\pi} + o(1)}$ and $P \leq p^{\frac{32}{\pi} + o(1)}$ for any prime $P \mid N.$
        \item If $J_0^\nw(N)$ is supersingular modulo $p,$ then $N \leq p^{\frac{48}{\pi} + o(1)}$.
    \item If $J_0^\sigma(N)$ is supersingular modulo $p,$ then $N \leq p^{\frac{64}{\pi} + o(1)}$. 
    \item If $J_0^{\nw,\sigma}(N)$ is supersingular modulo $p,$ then $N \leq p^{\frac{64}{\pi} + o(1)}$.
    \end{enumerate}
\end{corollary}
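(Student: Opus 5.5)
The plan is to feed the limiting values of the constants into Corollary \ref{cor:E(p)-relation} and its variants from Remark \ref{rmk:extension-of-corollary-formula-for-M}. Fix a variant, say $M^{\text{condition}}(p)$ with constant family $\{C_p^{\text{condition}}\}_p$. Corollary \ref{cor:E(p)-relation} gives $\log_p M^{\text{condition}}(p)\le E$ for any $E$ such that $X > K_p\, C_p^{\text{condition}}(X\log p)$ for all $X>E$. Here $K_p = 32\frac{p}{p-2}$, and $K_p\to 32$ as $p\to\infty$.

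\textbf{Step 1: uniform convergence of the constants.} Using the explicit forms in Appendix \ref{section:explicit-form-of-function}, I will check that $C_p^{\text{condition}}(X)\to L_p^{\text{condition}}$ as $X\to\infty$, uniformly in $p$. The limit is $L_p^{\text{condition}}=c\,\|\mu_p\|$, where $c=\frac32$ for $C_p$ and $C_p^\nw$, $c=1$ for $C_p^{\text{pm}}$, and $c=2$ for $C_p^{\operatorname{AL}}$ and $C_p^{\nw,\operatorname{AL}}$. By Definition \ref{def:mu-p}, $\|\mu_p\|=\frac{p-1}{p\pi}\to\frac1\pi$.

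\textbf{Step 2: choice of $E$.} Fix $\varepsilon>0$. By Step 1 there is $X_0(\varepsilon)$, independent of $p$, such that $C_p^{\text{condition}}(Y)\le c/\pi+\varepsilon$ for all $Y\ge X_0$ and all large $p$. Since $\log p\ge 1$ once $p\ge3$, the condition $X\ge X_0$ gives $X\log p\ge X_0$. So any $X>\max(X_0,\,K_p(c/\pi+\varepsilon))$ satisfies \eqref{eqn:E(p)-relation}. This yields
\[
\log_p M^{\text{condition}}(p)\le \max\bigl(X_0,\,K_p(c/\pi+\varepsilon)\bigr).
\]
The right-hand side tends to $32c/\pi+32\varepsilon$, provided $X_0$ is chosen below that value. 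If it is not, I would instead use the bound $C_p(Y)\le c/\pi+\varepsilon$ for $Y\ge X_0$ together with the fact that $X\log p\to\infty$ uniformly for $X$ bounded below, as $p\to\infty$. In that case one may take $X_0$ to be $X_0'/\log p$, which tends to $0$. Letting $\varepsilon\to0$ gives $\limsup_p \log_p M^{\text{condition}}(p)\le 32c/\pi$.

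\textbf{Step 3: reading off the four cases.} Taking $c=\frac32$ gives $N\le p^{48/\pi+o(1)}$ for $J_0(N)$ and for $J_0^\nw(N)$. Taking $c=2$ gives $N\le p^{64/\pi+o(1)}$ for the Atkin--Lehner versions. For a prime $P\mid N$, the factor $J_0(P)$ is an isogeny factor of $J_0(N)$, as in the proof of Corollary \ref{cor:M(p)-for-sufficiently-large-p}. Hence $P\le M^{\text{pm}}(p)\le p^{32/\pi+o(1)}$, using $c=1$.

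\textbf{Main obstacle.} The key point is the uniformity in $p$ of the convergence $C_p^{\text{condition}}(X)\to c\|\mu_p\|$. This has to be extracted from the explicit appendix formulas, checking that their error terms depend on $p$ only through bounded quantities such as $\|\mu_p\|$ and powers of $p^{-1/2}$. The remaining steps are routine limit manipulations.
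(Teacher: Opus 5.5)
Your proposal is correct and follows essentially the paper's route: apply Corollary \ref{cor:E(p)-relation} (and its variants), use the uniform-in-$p$ convergence $C_p^{\text{condition}}(X)\to c\,\|\mu_p\|$ with $c=\frac32,\frac32,2,2,1$ and $\|\mu_p\|\to 1/\pi$, and get the prime-divisor bound from $M^{\mathrm{pm}}(p)$ as in the proof of Corollary \ref{cor:M(p)-for-sufficiently-large-p}. Your treatment of the argument $X\log p$ (taking the threshold $X_0'/\log p\to 0$) is slightly more careful than the paper's one-line interchange of limits; note that the uniformity actually comes from $p$ entering the appendix bounds only through the bounded factor $1/\log p$, rather than through powers of $p^{-1/2}$.
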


\section{Horizontal supersingularity}\label{section:horizontal}

\subsection{Main horizontal results}

In \S\ref{subsection:horizontal-results} we defined, for $f \in S_2^{\nw}(N),$
\begin{align}
    V(f) &:= \left\{ \text{$p$ prime} : a_p(f) = 0  \right\}.\\
        V(N) &:= \left\{ \text{$p$ prime} : a_p(f) = 0 \quad \forall f \in S_2^\nw(N) \right\}.
\end{align}

We now prove Proposition \ref{prop:structure-result}.
{
\renewcommand{\thetheorem}{\ref{prop:structure-result}}
\begin{proposition}
    For any newform $f \in S_2^{\nw}(N),$ 
    \begin{align}
        \operatorname{SS}(A_f) \subseteq \left\{ p : p \mid \operatorname{disc}(K_f) \text{ or } p - 1 \mid 4[K_f : \mathbb{Q} ] \right\} \cup V(f). \label{eqn:RHS-structure-result}
    \end{align}
    In particular, $\operatorname{SS}(A_f) \setminus V(f)$ is a finite set.
\end{proposition}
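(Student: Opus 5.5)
Fix $p \in \operatorname{SS}(A_f)$ with $a_p(f) \neq 0$ and $p \nmid N$ (bad primes are excluded by the definition of $\operatorname{SS}$, which needs good reduction). The goal is to show that $p \mid \operatorname{disc}(K_f)$ or $p-1 \mid 4[K_f:\mathbb{Q}]$. By the Manin--Oort criterion, each root $\alpha$ of $x^2 - a_p(f)x + p$ can be written as $\alpha = \sqrt p\,\zeta$ with $\zeta$ a primitive $n$-th root of unity. The plan is to bound $n$ through degrees, then play this bound off against the ramification of $p$ in $\mathbb{Q}(\zeta_n)$.

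\textbf{Step 1 (degree bound).} We have $\alpha \in K_f(\alpha)$, a field of degree at most $2[K_f:\mathbb{Q}]$. Then $\zeta^2 = \alpha^2/p \in K_f(\alpha)$. The order of $\zeta^2$ is $n$ or $n/2$, so $\phi(n) \le 2\phi(\operatorname{ord}\zeta^2) \le 2\cdot 2[K_f:\mathbb{Q}] = 4[K_f:\mathbb{Q}]$. It is cleaner to work directly with $\beta := \alpha^2/p = \zeta^2$, a root of unity of order $m$. Since $\beta + \beta^{-1} = (\alpha^2+\bar\alpha^2)/p = (a_p^2 - 2p)/p \in K_f$, the field $\mathbb{Q}(\zeta_m)^+$ embeds into $K_f$ whenever $m \geq 3$.

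\textbf{Step 2 (ramification).} Suppose $p \nmid \operatorname{disc}(K_f)$. Then $p$ is unramified in $K_f$, and hence in $\mathbb{Q}(\zeta_m)^+$. If $p \mid m$, then $p$ is totally ramified in $\mathbb{Q}(\zeta_{p})$, so it ramifies in $\mathbb{Q}(\zeta_p)^+$ as soon as $p \ge 5$. We therefore need a second ingredient to rule out $p\nmid m$, or else get the divisibility $p-1 \mid 4[K_f:\mathbb{Q}]$ directly.

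This second ingredient is the key valuation observation. Since $a_p \ne 0$ and $|a_p| < 2\sqrt p$, and supersingularity forces all slopes to equal $1/2$, every embedding of $a_p$ has $p$-adic valuation at least $1/2$, because $a_p = \alpha + \bar\alpha$. Consequently $\beta + \beta^{-1} + 2 = a_p^2/p$ is a $p$-integral element of $K_f$. Meanwhile $\beta+\beta^{-1}+2 = (\zeta+\zeta^{-1})^2$ is $p$-adically small or a unit depending on $m$.

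The main obstacle is forcing $p \mid m$ (or $p \mid 2m$) from the hypotheses. The approach is to use that $\alpha^2 = p\beta$, so $\alpha^{2m} = p^m$. Then $x^2 - a_p x + p$ has roots whose ratio $\alpha/\bar\alpha = \zeta^2 = \beta$. Since $\alpha \bar \alpha = p$, this gives $\beta = \alpha^2/p$, and $(1-\beta) = (\bar\alpha - \alpha)\alpha/p$. Take the norm from $K_f(\alpha)$ to $\mathbb{Q}$ of $(\alpha-\bar\alpha)^2 = a_p^2-4p$, and compare with the $p$-adic valuation of $1-\beta$. The term $a_p^2$ has valuation at least $1$ while $4p$ has valuation exactly $1$, which should force $1-\beta$ to have positive valuation at a prime above $p$. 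That in turn forces $m$ to be divisible by $p$ (roots of unity of order prime to $p$ are distinct mod $p$ from $1$), apart from the degenerate case $\beta = 1$, which corresponds to $a_p = \pm 2\sqrt p$ and is impossible. Once $p \mid m$, we have $\mathbb{Q}(\zeta_p) \subseteq \mathbb{Q}(\zeta_m)$. Then either $p$ ramifies in $\mathbb{Q}(\zeta_p)^+ \subseteq K_f$, giving $p \mid \operatorname{disc}(K_f)$, or $p \le 3$. In either case $p-1 \le \phi(m)/\ldots$: concretely $p-1 \mid \phi(m)$, and $\phi(m) \mid$ (up to factor 2) $4[K_f:\mathbb{Q}]$ via the tower $\mathbb{Q}(\zeta_m)^+ \subseteq K_f$. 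For small $p$ ($p=2,3$) the condition $p-1 \mid 4[K_f:\mathbb{Q}]$ holds trivially.

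Finiteness follows at once: the right-hand side of \eqref{eqn:RHS-structure-result} outside $V(f)$ consists of divisors of $\operatorname{disc}(K_f)$ together with primes $p \le 4[K_f:\mathbb{Q}]+1$.
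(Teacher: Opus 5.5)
Your Step 1 is correct: $\beta+\beta^{-1}=a_p^2/p-2\in K_f$ does give $\mathbb{Q}(\zeta_m)^+\hookrightarrow K_f$, which is sharper than anything the paper uses. The gap is in Step 2, at the point where you try to force $p\mid m$.

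From $v(a_p^2)\ge 1$ and $v(4p)=1$ you only get $v(a_p^2-4p)\ge 1$. Hence $v_{\mathfrak P}(1-\beta)=\tfrac12 v(a_p^2-4p)-\tfrac12\ge 0$, not $>0$.

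This valuation argument also never uses your standing assumption $p\nmid\operatorname{disc}(K_f)$, and without it the conclusion is false. The data you use are equally consistent with $a_p=\sqrt p$ (possible a priori when $\sqrt p\in K_f$). Then $\zeta=e^{\pi i/3}$, $\beta$ is a primitive cube root of unity, and $p\nmid m$ for $p\ne 3$.

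Once you do use unramifiedness, the valuation points the wrong way. Valuations on $K_f$ at an unramified $p$ are integers, so $v(a_p)\ge\tfrac12$ upgrades to $v(a_p)\ge 1$. For odd $p$ this gives $v(a_p^2-4p)=1$ exactly, so $1-\beta$ is a $p$-adic unit. The route through $1-\beta$ therefore cannot produce the contradiction you need.

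The element to look at is $1+\beta$. Since $a_p^2/p=\beta^{-1}(1+\beta)^2$ and $v(a_p)\ge 1$, you get $-\beta\equiv 1\pmod{\mathfrak P}$. So $-\beta$ has $p$-power order, and that order is nontrivial because $\beta\ne-1$; this is exactly where $a_p\ne0$ is needed. Then $p\mid m$ for odd $p$. Your embedding $\mathbb{Q}(\zeta_p)^+\subseteq\mathbb{Q}(\zeta_m)^+\hookrightarrow K_f$ now contradicts unramifiedness once $p\ge5$. The primes $p=2,3$ satisfy $p-1\mid 4[K_f:\mathbb{Q}]$ trivially.

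The paper avoids valuations altogether. Using $a_p\ne0$, it writes $\sqrt p=a_p/(\zeta_n+\overline{\zeta_n})\in K_f(\zeta_n)$, so $p$ ramifies in $K_f$ or in $\mathbb{Q}(\zeta_n)$. In the latter case $p\mid n$ and $p-1\mid\varphi(n)\mid 4[K_f:\mathbb{Q}]$.

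Either repaired argument, combined with your Step 1, proves more than the statement asks. For $p\ge 5$, the set $\operatorname{SS}(A_f)\setminus V(f)$ contains only primes dividing $\operatorname{disc}(K_f)$.
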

\addtocounter{theorem}{-1}
}

\begin{proof}
Suppose that $A_f$ is supersingular modulo $p$, and that $p \notin V(f)$. Then write 
$a_p(f) = \alpha + \overline\alpha$, where $\alpha,\overline \alpha$ is a certain conjugate pair of $p$-Frobenius eigenvalues for $A_f$. Since $A_f$ is supersingular modulo $p$, $\alpha$ is of the form $\alpha = \sqrt p \, \zeta_n$, where 
$\zeta_n$ is a primitive $n$-th root of unity, and so $\zeta_n + \overline{\zeta_n} = \frac{1}{\sqrt p} \alpha + \frac{1}{\sqrt p} \overline{\alpha} = \frac{1}{\sqrt{p}} a_p(f) \ne 0$. This then implies that
$$\sqrt{p} = \frac{a_p(f)}{\zeta_n + \overline{\zeta_n}} \in K_f(\zeta_n),$$
so $p$ is ramified in  $K_f(\zeta_n)$. Hence $p$ is either ramified in $K_f$ or ramified in $\mathbb{Q}(\zeta_n).$ In the first case, $p \mid \operatorname{disc}(K_f).$ In the second case, $p \mid n$ by \cite[Proposition 2.3]{washington1997introduction}, so 
\begin{align}
    p - 1 &\ \ \text{divides} \ \ \varphi(n) = \left[ \mathbb{Q}(\zeta_n) : \mathbb{Q}  \right]\\
    &\ \  \text{divides} \ \   2 \cdot \left[ \mathbb{Q} \left( \zeta_n + \overline{\zeta_n}\right) : \mathbb{Q} \right] = 2 \cdot  \left[ \mathbb{Q} \left( \frac{a_p(f)}{\sqrt{p}} \right) : \mathbb{Q} \right] \\
    &\ \  \text{divides} \ \  2 \cdot  [K_f (\sqrt{p}) : \mathbb{Q}] \\
    &\ \  \text{divides} \ \  4 \cdot [K_f : \mathbb{Q}],
\end{align}
completing the proof.
\end{proof}

Proposition \ref{prop:structure-result} has several consequences. Firstly, given a newform $f \in S_2^\nw(N),$ the constant
\begin{align}
    C_f := \max \left\{ \operatorname{disc}(K_f), 4[K_f : \mathbb{Q}] + 1\right\} \label{eqn:constant-C-f}
\end{align}
satisfies Theorem \ref{main-horizontal}, which we restate here.

{
\renewcommand{\thetheorem}{\ref{main-horizontal}}
\begin{theorem}
    Fix an abelian variety $A_f$. Then for all $p > C_f,$ $A_f$ is supersingular modulo $p$ if and only if it is isogenous to a power of the unique (up to isogeny) supersingular elliptic curve modulo $p$. 
\end{theorem}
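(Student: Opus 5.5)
The plan is to reduce the statement to Proposition \ref{prop:structure-result}, together with a Honda--Tate identification of the reduction when $a_p(f)=0$.

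First, the constant $C_f$ excludes every prime appearing in the exceptional set of Proposition \ref{prop:structure-result}. If $p > C_f \ge \operatorname{disc}(K_f)$, then $p \nmid \operatorname{disc}(K_f)$; in the degenerate case $K_f=\mathbb{Q}$ the discriminant is $1$, so the condition is vacuous. If $p > 4[K_f:\mathbb{Q}]+1$, then $p-1 > 4[K_f:\mathbb{Q}]$, so $p-1 \nmid 4[K_f:\mathbb{Q}]$. Hence for $p > C_f$, Proposition \ref{prop:structure-result} gives that $A_f$ supersingular at $p$ forces $p \in V(f)$, i.e.\ $a_p(f)=0$. Since $p>C_f\ge 5>N$ is not automatic, I should also note that $p \nmid N$ is needed for good reduction. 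Supersingularity is only defined at primes of good reduction, so we implicitly restrict to $p\nmid N$.

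Next, suppose $a_p(f)=0$. Since $a_p(f)=0$ is rational, every Galois conjugate $\tau(a_p(f))$ also vanishes. The Weil polynomial displayed in \S\ref{subsection:modular-jacobians} then becomes
\[
P_{A_f,p}(x)=\prod_{\tau}(x^2+p)=(x^2+p)^{[K_f:\mathbb{Q}]}.
\]
The polynomial $x^2+p$ is exactly the Weil polynomial of a supersingular elliptic curve $E/\mathbb{F}_p$ with trace $0$, which exists by Honda--Tate (or Deuring) for every $p$. By Tate's isogeny theorem, abelian varieties over $\mathbb{F}_p$ with equal Weil polynomials are $\mathbb{F}_p$-isogenous. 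Therefore $A_f/\mathbb{F}_p \sim E^{[K_f:\mathbb{Q}]}$. This holds over $\mathbb{F}_p$ itself, which is stronger than supersingularity over $\overline{\mathbb{F}}_p$.

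For uniqueness, I need to see that the isogeny class over $\mathbb{F}_p$ of an elliptic curve with Weil polynomial $x^2+p$ is determined by the trace. The supersingular elliptic curves over $\mathbb{F}_p$ for $p\ge5$ all have trace $0$, because the trace is divisible by $p$ and bounded by $2\sqrt p$. Hence "the unique supersingular elliptic curve up to isogeny" is well defined for $p > C_f \ge 5$.

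The converse direction is immediate. If $A_f/\mathbb{F}_p$ is isogenous to a power of a supersingular elliptic curve, it is supersingular by definition.

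The only subtle points are the bookkeeping around good reduction and the $p\ge 5$ uniqueness claim; the rest is routine.
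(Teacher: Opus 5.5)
Your proposal is correct and follows the same route as the paper. Proposition \ref{prop:structure-result} forces $a_p(f)=0$ for $p>C_f$, so the Weil polynomial becomes $(x^2+p)^{[K_f:\mathbb{Q}]}$, and $p>C_f\ge 5$ makes the supersingular isogeny class over $\mathbb{F}_p$ unique (trace $0$); you also spell out the appeal to Tate's isogeny theorem and the implicit good-reduction hypothesis $p\nmid N$, though the sentence ``$p>C_f\ge 5>N$ is not automatic'' is garbled and should simply state that $p\nmid N$ is assumed.
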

\addtocounter{theorem}{-1}
}

\begin{proof}
Suppose that $A_f$ is supersingular modulo $p.$ 
Then since $p > C_f$, Proposition \ref{prop:structure-result} implies that $a_p(f) = 0$. Hence, the $p$-Weil polynomial of $A_f$ is $P_{A_f}(x) = (x^2+p)^{[K_f:\mathbb{Q}]},$ so $A_f$ is isogenous to a power of the unique supersingular elliptic curve (since $p > C_f  \geq 5$). The converse is clear.
\end{proof}

We also have the following corollaries.

\begin{corollary}\label{superspecial}
Fix an abelian variety $A_f$. Then for all $p > C_f,$ $A_f$ is superspecial modulo $p$ if and only if it is isogenous to a power of the unique (up to isogeny) supersingular elliptic curve modulo $p$. 
\end{corollary}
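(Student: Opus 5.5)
The plan is to reduce the statement to Theorem \ref{main-horizontal} together with the relation between superspecial and supersingular reduction. Recall that an abelian variety over $\overline{\mathbb{F}}_p$ is superspecial if it is isomorphic, not merely isogenous, to a product of supersingular elliptic curves. Equivalently, its $a$-number equals its dimension. Superspecial therefore implies supersingular. So if $A_f$ is superspecial modulo $p$ with $p > C_f$, then it is supersingular modulo $p$, and Theorem \ref{main-horizontal} immediately gives that $A_f/\mathbb{F}_p$ is isogenous to a power of the unique supersingular elliptic curve (up to isogeny) over $\mathbb{F}_p$. This is the forward direction.

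For the converse, suppose $p > C_f$ and $A_f/\mathbb{F}_p$ is isogenous to $E^{g}$ with $E$ supersingular, where $g = [K_f:\mathbb{Q}]$. Then the Weil polynomial is $(x^2+p)^g$, so $a_p(f)=0$. The key input is the lemma already cited in \S\ref{subsection:further-work}, namely \cite[Lemma 2.2]{yu2011superspecial}, which asserts that such reductions are in fact superspecial. I would invoke it directly.

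If a self-contained argument is preferred, I would proceed as follows. Since $a_p(f)=0$, Frobenius $\pi$ on $A_f/\mathbb{F}_p$ satisfies $\pi^2 = -p$ in $\operatorname{End}(A_f/\mathbb{F}_p)$, because the Weil polynomial $(x^2+p)^g$ together with semisimplicity of Frobenius forces the minimal polynomial to be $x^2+p$. From this one deduces $F^2 = -p = -VF$ on the Dieudonné module $M$, hence $F = -V$. It follows that $FM = VM$, which is exactly the superspecial condition $a(A)=g$ (Oda's criterion, $\ker F \cap \ker V$ on $M/pM$ has full dimension).

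I expect this last deduction to be the main obstacle. It requires $F$ to be injective on $M$ and care about $W(\mathbb{F}_p)=\mathbb{Z}_p$ semilinearity; over $\mathbb{F}_p$, however, $F$ is linear. Deferring to \cite[Lemma 2.2]{yu2011superspecial} sidesteps all of this. The hypothesis $p > C_f \ge 5$ is needed only to quote Theorem \ref{main-horizontal} and to have the unique supersingular isogeny class with $a_p=0$.
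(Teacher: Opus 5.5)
Your proposal is correct and matches the paper's proof. Superspecial implies supersingular, so Theorem \ref{main-horizontal} gives the forward direction, and the converse is exactly \cite[Lemma 2.2]{yu2011superspecial}, which is the only thing the paper cites. Your optional self-contained substitute for Yu's lemma is also sound: over $\mathbb{F}_p$ the Dieudonné operator $F$ is $\mathbb{Z}_p$-linear and induced by $\pi$, so $\pi^2=-p$ gives $F(F+V)=0$, and injectivity of $F$ then yields $V=-F$, hence $FM=VM$.
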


\begin{proof}
By \cite[Lemma 2.2]{yu2011superspecial}, if $A \sim_{\mathbb{F}_p} E_0^g,$ where $E_0$ is the unique supersingular elliptic curve modulo $p$, then $A_p$ is superspecial.
\end{proof}

\begin{corollary}\label{cor:horizontal-result-for-Jac}
    Fix $N \ge 1$. Then for all $p > \max_{f \in S_2^{\nw}(N)} C_f,$ $J_0^{\nw}(N)$ is supersingular modulo $p$ if and only if
    it is isogenous to a power of the unique (up to isogeny) supersingular elliptic curve modulo $p$. 
\end{corollary}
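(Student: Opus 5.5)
The plan is to reduce to Theorem \ref{main-horizontal} applied to each $\mathbb{Q}$-simple factor of $J_0^{\nw}(N)$. By the decomposition recalled in \S\ref{subsection:modular-jacobians}, we have $J_0^{\nw}(N) \sim_{\mathbb{Q}} \prod_{f} A_f$, where $f$ runs over Galois orbits of newforms in $S_2^{\nw}(N)$. This isogeny is defined over $\mathbb{Q}$, and $p > \max_f C_f$ with $C_f \ge 4[K_f:\mathbb{Q}]+1 \ge 5$. Since every $A_f$ has good reduction at any $p \nmid N$, the isogeny reduces to an isogeny over $\mathbb{F}_p$. I would first check the good-reduction hypothesis: if $p \mid N$, then $p$ divides the conductor of some $A_f$ but not $\operatorname{disc}(K_f)$ in general. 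So I would state the corollary for $p \nmid N$, implicitly as the paper does throughout, since the Weil polynomial is only defined at good primes.

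Next, I would observe that supersingularity is isogeny-invariant and that a product is supersingular if and only if each factor is. This follows from the Newton polygon or Manin--Oort characterization in the introduction, since the Frobenius eigenvalues of a product form the multiset union of the eigenvalues of the factors.

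For the forward direction, suppose $J_0^{\nw}(N)$ is supersingular modulo $p$. Then each $A_f$ is supersingular modulo $p$. Since $p > C_f$ for every $f$, Theorem \ref{main-horizontal} gives $A_f \sim_{\mathbb{F}_p} E_0^{[K_f:\mathbb{Q}]}$, where $E_0$ is the unique supersingular elliptic curve over $\mathbb{F}_p$ up to isogeny. This uniqueness uses $p \ge 5$, as the Weil polynomial is $x^2+p$. Taking the product gives $J_0^{\nw}(N) \sim E_0^{\dim J_0^{\nw}(N)}$. Equivalently, $a_p(f)=0$ for every $f$, so the Weil polynomial is $(x^2+p)^{\dim}$.

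The converse is immediate, because a power of a supersingular elliptic curve is supersingular. I expect no real obstacle here. The only point needing care is bookkeeping: the same $E_0$ must serve for all factors, which is exactly the uniqueness of the supersingular isogeny class over $\mathbb{F}_p$ for $p\ge5$.
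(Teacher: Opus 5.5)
Your proposal is correct and follows the same route as the paper. You decompose $J_0^{\nw}(N) \sim_{\mathbb{Q}} \prod_f A_f$, observe that the product is supersingular modulo $p$ if and only if each $A_f$ is, apply Theorem \ref{main-horizontal} factorwise, and reassemble using the uniqueness of the supersingular isogeny class over $\mathbb{F}_p$ for $p \ge 5$. Your explicit restriction to primes of good reduction $p \nmid N$ is a useful clarification that the paper leaves implicit; it is genuinely needed, since for $N=11$ one has $C_f = 5 < 11$.
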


\begin{proof}
    Recall that $J_0^{\nw}(N) \sim_{\mathbb{Q}} \prod_{f} A_f \label{eqn:decomp-J_0-new-mod-p},$ where the product over $f$ runs over Galois orbits of newforms $f \in S_2^{\nw}(N)$. Then, $J_0^{\nw}(N)$ is supersingular modulo $p$ if and only if each $A_f$ is supersingular modulo $p.$ If $p > \max_{f \in S_2^{\nw}(N)} C_f,$ then Theorem \ref{main-horizontal} implies that each $A_f$ is isogenous to a power of the unique supersingular elliptic curve over $\mathbb{F}_p$. The same is then true of $J_0^\nw(N)$.
\end{proof}

\begin{corollary}\label{cor:horizontal-result-for-Jac-full}
    Fix $N \ge 1$. Then for all $p > \max_{f \in  \{S^\nw_2(M),\, M \mid N\}} \{C_f\}$, $J_0(N)$ is supersingular modulo $p$ if and only if
    it is isogenous to a power of the unique (up to isogeny) supersingular elliptic curve modulo $p$. 
\end{corollary}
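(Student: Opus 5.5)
The plan is to reduce to Theorem~\ref{main-horizontal}, applied to every factor in the $\mathbb{Q}$-isogeny decomposition
\[
J_0(N) \sim_{\mathbb{Q}} \prod_{M \mid N} \prod_{f} A_f^{\sigma_0(N/M)},
\]
where $f$ runs over Galois orbits of newforms in $S_2^{\nw}(M)$. Reduction modulo a prime $p \nmid N$ respects isogenies and products. Hence $J_0(N)$ is supersingular modulo $p$ if and only if every $A_f$ with $f \in S_2^{\nw}(M)$ for some $M \mid N$ is supersingular modulo $p$. This is because the Newton polygon of a product is the concatenation of those of its factors; equivalently, via the Manin--Oort criterion, the multiset of normalized Frobenius eigenvalues of $J_0(N)$ is the union, with multiplicities $\sigma_0(N/M)$, of those of the $A_f$.

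Now assume $p > \max\{C_f : f \in S_2^\nw(M),\, M \mid N\}$ and that $J_0(N)$ is supersingular modulo $p$. Then each relevant $A_f$ is supersingular, and $p > C_f$. By Theorem~\ref{main-horizontal}, or directly by Proposition~\ref{prop:structure-result}, we get $a_p(f) = 0$. So each $A_f$ modulo $p$ is isogenous over $\mathbb{F}_p$ to $E_0^{[K_f:\mathbb{Q}]}$, where $E_0$ is the supersingular elliptic curve with Weil polynomial $x^2+p$. Taking the product with multiplicities, $J_0(N)$ modulo $p$ is isogenous to $E_0^{g}$ with
\[
g = \sum_{M\mid N}\sum_f \sigma_0(N/M)\,[K_f:\mathbb{Q}] = \dim J_0(N).
\]
Equivalently, its Weil polynomial is $(x^2+p)^{g}$, and Tate's isogeny theorem over $\mathbb{F}_p$ gives the isogeny. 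The converse is immediate, since a power of a supersingular elliptic curve is supersingular.

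The only points needing care are bookkeeping ones. First, $p \nmid N$ is needed for good reduction. This should follow from the hypothesis, or else be tacitly assumed as in the definition of $\operatorname{SS}$, since supersingularity is only defined at primes of good reduction. Second, uniqueness of $E_0$ up to $\mathbb{F}_p$-isogeny requires $p \geq 5$, which holds because $C_f \geq 4[K_f:\mathbb{Q}]+1 \geq 5$. I expect no real obstacle: the argument is essentially the proof of Corollary~\ref{cor:horizontal-result-for-Jac}, with the newpart decomposition replaced by the full old/new decomposition and multiplicities.
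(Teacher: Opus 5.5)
Your proposal is correct and is essentially the paper's argument. The paper combines Corollary~\ref{cor:horizontal-result-for-Jac} with $J_0(N) \sim_{\mathbb{Q}} \prod_{M \mid N} J_0^{\nw}(M)^{\sigma_0(N/M)}$, while you unfold that corollary by applying Theorem~\ref{main-horizontal} to each $A_f$ directly. One small correction: $p \nmid N$ does not follow from $p > \max C_f$ (e.g.\ $N = p = 11$, where $C_f = 5$), so it has to be the tacit good-reduction assumption built into the definition of supersingularity.
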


\begin{proof}
    This follows from Corollary \ref{cor:horizontal-result-for-Jac} and the decomposition $J_0(N) \sim_{\mathbb{Q}} \prod_{M \mid N} J_0^{\nw}(M)^{\sigma_0(N/M)}$. 
\end{proof}

We now prove Theorem \ref{thm:Hui-improvement}, restated here.

{
\renewcommand{\thetheorem}{\ref{thm:Hui-improvement}}
\addtocounter{theorem}{-1}
\begin{theorem}
    There exist universal constants $c_1$ and $c_2$ such that for any $X \geq 3$,
    \begin{enumerate}
        \item For each non-CM newform $f \in S_2^\nw(N),$
    \begin{align}
        \#  \operatorname{SS}(A_f) \cap \{p : p \leq X\} \leq c_1 \cdot \frac{\log (2N \log X)}{\sqrt{\log X}} \cdot \pi(X) + \frac{N}{3}.
    \end{align}
    \item For each non-CM newform $f \in S_2^\nw(N),$ if $L(s, \operatorname{Sym}^m (f))$ satisfies GRH for all $m \geq 0,$
    \begin{align}
        \#  \operatorname{SS}(A_f) \cap \{p : p \leq X\} \leq c_2 \cdot \frac{\log (2N \log X)}{X^{1/4}} \cdot \pi(X) + \frac{N}{3}.
    \end{align}
    \end{enumerate}
    Here, $\pi(X)$ denotes the prime counting function.
\end{theorem}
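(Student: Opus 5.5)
The plan is to split $\operatorname{SS}(A_f)\cap\{p\le X\}$ according to Proposition \ref{prop:structure-result}:
\begin{align}
\#\operatorname{SS}(A_f)\cap\{p\le X\} \le \#\{p : p\mid \operatorname{disc}(K_f)\} + \#\{p : p-1 \mid 4[K_f:\mathbb{Q}]\} + \#\bigl(V(f)\cap\{p\le X\}\bigr).
\end{align}
The first two terms are independent of $X$ and should produce the additive $\frac{N}{3}$. The last term is handled by effective Sato--Tate, namely Thorner's result \cite{Thorner2021Effective}, unconditionally for part (1) and under GRH for all $\operatorname{Sym}^m f$ for part (2).

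For the finite part, use $[K_f:\mathbb{Q}]\le \dim S_2(N)\le \frac{N+1}{12}$, or more crudely $\le N/12$ up to small cases. The primes with $p-1\mid 4[K_f:\mathbb{Q}]$ are among the primes $\le 4[K_f:\mathbb{Q}]+1$. The number of prime divisors of $\operatorname{disc}(K_f)$ is at most $\omega(\operatorname{disc}K_f)\le \log_2|\operatorname{disc}K_f|$. For this I would invoke the discriminant bound to be proved in \S\ref{subsection:discriminant-bound}. I expect it to take roughly the form $\log|\operatorname{disc}K_f| \ll [K_f:\mathbb{Q}]\log(\text{something polynomial in }N)$, obtained by bounding the discriminant of the order $\mathbb{Z}[a_n]$ generated by Hecke eigenvalues through Deligne bounds on the conjugates, since $\operatorname{disc}$ divides the product of squared differences of conjugates. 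Combining these with $\pi(y)\le 1.26\,y/\log y$ and $[K_f:\mathbb{Q}]\le N/12$ should give a total of at most $N/3$. Any small-$N$ slack can be absorbed by adjusting $c_1,c_2$, since $\pi(X)\ge 1$ for $X\ge 3$. This bookkeeping is the main obstacle: the constant $\frac{N}{3}$ is tight-looking. If the discriminant bound carries an extra $\log N$ factor, I would instead move that excess into the $c_i\log(2N\log X)\pi(X)/\cdots$ term. This works when $N$ is small relative to $X$, and the bound is trivial otherwise.

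For $V(f)$, note that $a_p(f)=0$ means $\theta_p=\pi/2$, so $V(f)\cap\{p\le X\}$ is contained in $\{p\le X : \theta_p\in I\}$ for any small interval $I\ni\pi/2$. Thorner's effective Sato--Tate gives, for non-CM $f$,
\begin{align}
\#\{p\le X:\theta_p\in I\} = \mu_{ST}(I)\pi(X) + O\!\left(\pi(X)\frac{\log(2N\log X)}{\sqrt{\log X}}\right).
\end{align}
Under GRH for all symmetric powers, the error instead is $O(\pi(X)\log(2N\log X)\,X^{-1/4})$, with the $\log X$ factors absorbed into the power saving. We then let $|I|\to 0$, or take $|I|$ comparable to the error term so that $\mu_{ST}(I)\ll |I|$ is dominated by it. This yields the stated bounds for $\#V(f)\cap\{p\le X\}$ with universal constants.

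Adding the two contributions gives (1) and (2).
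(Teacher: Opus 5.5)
Your decomposition via Proposition \ref{prop:structure-result} and your handling of $V(f)$ match the paper. The paper applies Thorner directly to the degenerate interval and uses $\mu_{ST}(\{0\})=0$; shrinking $I$ amounts to the same thing, because Thorner's error term is uniform in $I$.

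The gap is in the finite part. The estimate $\omega(\operatorname{disc}K_f)\le\log_2|\operatorname{disc}K_f|$ loses exactly a factor of $\log N$. The bound actually proved is $\log|\operatorname{disc}K_f|\le\frac{2}{9}N\log N$ (Lemma \ref{lem:disc-bound}), and $[K_f:\mathbb{Q}]$ is only controlled by $(N+1)/12$. So any bound of the shape you anticipate gives, through $\log_2$, only $\omega(\operatorname{disc}K_f)=O(N\log N)$, not $\le N/3$. The missing ingredient is Robin's estimate $\omega(n)\le 1.38407\,\log n/\log\log n$. Since $t\mapsto t/\log t$ is increasing, plugging in $t\le\frac{2}{9}N\log N$ gives $\omega(\operatorname{disc}K_f)\le 1.38407\cdot\frac{\frac{2}{9}N\log N}{\log(\frac{2}{9}N\log N)}\le 0.30758\,N$ for $N\ge 90$; the denominator cancels the offending $\log N$. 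The paper also counts $\{p: p-1\mid 4[K_f:\mathbb{Q}]\}$ by $\sigma_0(4[K_f:\mathbb{Q}])\le 0.025752\,N$ rather than by $\pi(4[K_f:\mathbb{Q}]+1)$. Your prime count is $o(N)$ and would eventually fit into the remaining $0.0257N$, but the divisor bound does so far sooner, with small $N$ checked by computation (Corollary \ref{cor:omega-log-disc}).

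Your fallback, absorbing an excess of order $N\log N$ into the Sato--Tate term, rescues part (1) but not part (2).

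In (1) your dichotomy is genuine. The error term is at least $\pi(X)$ unless $\sqrt{\log X}>c_1\log(2N\log X)$, which forces $\log X\gg(\log N)^2$. In that range $\pi(X)/\sqrt{\log X}$ is superpolynomially larger than $N\log N$, so (1) even holds without the $N/3$.

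In (2), however, the term $c_2\frac{\log(2N\log X)}{X^{1/4}}\pi(X)\asymp X^{3/4}\frac{\log(2N\log X)}{\log X}$ already drops below $\pi(X)$ once $X$ exceeds a fixed power of $\log N$. Take $X=N^{6/5}$. Then $\pi(X)$ is far larger than $N/3$, so the trivial bound gives nothing, while the error term is only $O(N^{9/10})$ and cannot absorb an excess of size $N\log N$. In a range roughly $N\log N\lesssim X\lesssim (N\log N)^{4/3}$, neither branch of your argument works. So for (2) you really must show that the finite set has at most $N/3$ elements for all large $N$, which is exactly what the Robin bound supplies.
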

}
    
\begin{proof}
    By Proposition \ref{prop:structure-result}, it suffices to bound the intersection of $\{1, 2, \cdots, \lfloor X \rfloor \}$ with the three components of the right hand side of \eqref{eqn:RHS-structure-result}. Firstly, Corollary \ref{cor:omega-log-disc}, to be proved in \S\ref{subsection:discriminant-bound}, gives that
    \begin{align}
        \#\left\{ p : p \mid \operatorname{disc}(K_f) \text{ or } p - 1 \mid 4[K_f : \mathbb{Q} ] \right\} \leq \frac{N}{3}.
    \end{align} 
    
    Next, we use Thorner's effective Sato-Tate result \cite[Theorem 1.1]{Thorner2021Effective} to bound the third term:
    \begin{align}
        \# \{p \leq X : p \in V(f)\} &\leq \pi(x) \cdot \int_{0}^0 \, d \mu_{ST} + c_1 \, \pi(X) \cdot \frac{\log (2N \log X)}{\sqrt{\log X}} \qquad \text{for some absolute $c_1$}\\
        &= c_1 \, \pi(X) \cdot \frac{\log (2N \log X)}{\sqrt{\log X}}, \qquad \text{since $\mu_{ST}(\{0\}) = 0.$}
    \end{align}
    This proves (1). Part (2) follows likewise, using \cite[Theorem 1.3]{Thorner2021Effective} in place of \cite[Theorem 1.1]{Thorner2021Effective}.
\end{proof}

An analogous result holds for $\operatorname{SS}(J_0^{\nw}(N))$ for all but finitely many $N.$

\begin{corollary}\label{cor:Hui-improvement-for-Jacobian}
    Suppose $N \notin \{ 27, 32, 36, 49, 64, 108, 256 \}.$ Then
    \begin{enumerate}
        \item The Jacobian $J_0^{\nw}(N)$ satisfies
    \begin{align}
        \#  \operatorname{SS}(J_0^{\nw}(N)) \cap \{p : p \leq X\} \leq c_1 \cdot \frac{\log (2N \log X)}{\sqrt{\log X}} \cdot \pi(X) + \frac{N}{3}.
    \end{align}
    \item If there exists a non-CM newform $f \in S_2^\nw(N),$ such that $L(s, \operatorname{Sym}^m (f))$ satisfies GRH for all $m \geq 0,$ then
    \begin{align}
        \#  \operatorname{SS}(J_0^{\nw}(N)) \cap \{p : p \leq X\} \leq c_2 \cdot \frac{\log (2N \log X)}{X^{1/4}} \cdot \pi(X) + \frac{N}{3}.
    \end{align}
    \end{enumerate}
    where $c_1$ and $c_2$ are the universal constants in Theorem \ref{thm:Hui-improvement}.
\end{corollary}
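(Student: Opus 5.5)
The plan is to reduce the statement to Theorem \ref{thm:Hui-improvement} applied to a single well-chosen newform. We use the decomposition $J_0^{\nw}(N) \sim_{\mathbb{Q}} \prod_f A_f$, with $f$ running over Galois orbits of newforms in $S_2^{\nw}(N)$. By the Manin--Oort criterion, $J_0^{\nw}(N)$ is supersingular at $p$ exactly when every Frobenius eigenvalue has normalized form a root of unity. This holds if and only if every factor $A_f$ is supersingular at $p$. Hence for any single newform $f \in S_2^{\nw}(N)$ we have
\begin{align}
    \operatorname{SS}(J_0^{\nw}(N)) \subseteq \operatorname{SS}(A_f).
\end{align}
It therefore suffices to exhibit one non-CM newform $f \in S_2^{\nw}(N)$. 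Part (1) then follows immediately from Theorem \ref{thm:Hui-improvement}(1) with the same constant $c_1$. For part (2), the hypothesis already hands us a non-CM $f$ satisfying GRH for all $\operatorname{Sym}^m(f)$, so Theorem \ref{thm:Hui-improvement}(2) applies directly and nothing further is needed. If $S_2^{\nw}(N) = 0$, then $J_0^{\nw}(N)$ is trivial. In that case the bound should be read vacuously (or the statement is understood for $N$ with nonzero newspace), and I would note this convention explicitly.

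The real content is therefore the following claim. For $N \notin \{27,32,36,49,64,108,256\}$ with $S_2^{\nw}(N) \neq 0$, not every newform of level $N$ has CM. I would argue this in two steps.

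First, a weight-$2$ newform with trivial character and CM by an imaginary quadratic field $K$ corresponds to a Hecke character of $K$ of infinity type $1$. Its level is $|d_K| \cdot \operatorname{N}(\mathfrak{f})$, where $\mathfrak{f}$ is the conductor. Moreover, trivial nebentypus forces strong constraints, essentially that the character restricted to $\mathbb{Z}$ agrees with $\chi_K$.

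Second, the number of CM newforms of level $N$ grows at most like a power of the class number times a divisor function, which is $O(N^{1/2+\epsilon})$. Meanwhile $\dim S_2^{\nw}(N) \gg \psi^{\nw}(N)/12$, which grows linearly by \eqref{eqn:bound-on-psi-new}. So for $N$ beyond an explicit bound, CM forms cannot fill the newspace.

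For the remaining finitely many $N$, I would check directly from LMFDB-style tables that some newform is non-CM unless $N$ is one of the seven listed levels. At those seven levels, every newform is CM: their newspaces are spanned by the CM forms from $\mathbb{Q}(\sqrt{-3})$ and $\mathbb{Q}(i)$.

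The main obstacle is making this CM-counting bound explicit enough that the residual finite check is feasible. I would first try the crude count over divisors $d_K \mid N$ of $\#\{\text{Hecke characters of conductor norm } N/|d_K|\} \le h(K)\,\sigma_0(N)\cdot 2^{\omega(N)}$, combined with $h(K) \le \sqrt{|d_K|}\log|d_K|$. Together with \eqref{eqn:nicolas-robin-sigma-bound}, \eqref{eqn:robin-omega-bound} and \eqref{eqn:bound-on-psi-new}, this reduces everything to a computation over moderately sized $N$.
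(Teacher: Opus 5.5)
Your reduction is exactly the paper's: $\operatorname{SS}(J_0^{\nw}(N))\subseteq\operatorname{SS}(A_f)$ for every factor $A_f$, followed by Theorem~\ref{thm:Hui-improvement} applied to a single non-CM newform. For part (2) the hypothesis supplies that newform directly. The difference is that the paper does not prove the existence of a non-CM newform; it imports it as a black box from the forthcoming work \cite{Mondal2026}.

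Your remark about levels with $S_2^{\nw}(N)=0$ (e.g.\ $N\le 10$, $N=22,28,60$) is a genuine point that the paper glosses over. However, it is not a matter of reading the bound vacuously. Under the paper's definition the zero abelian variety is supersingular at every prime, so the left side equals $\pi(X)$ and the inequality fails for large $X$. Those $N$ must simply be excluded.

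The gap is in your self-contained proof of the non-CM input, at exactly the step you flag as the main obstacle. The crude count $h(K)\,\sigma_0(N)\,2^{\omega(N)}$ for Hecke characters of infinity type $1$, conductor norm $N/|d_K|$ and trivial nebentypus is false.

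\begin{itemize}
\item For a fixed conductor $\mathfrak f$, let $m\mathbb Z=\mathfrak f\cap\mathbb Z$. The admissible characters number $h_K$ times the characters of $(\mathcal O_K/\mathfrak f)^\times$ with prescribed values on the images of $(\mathbb Z/m)^\times$ and of the units.
\item That is roughly $h_K\,\#(\mathcal O_K/\mathfrak f)^\times/\varphi(m)$ characters, which can be of size $h_K\sqrt{\operatorname{N}\mathfrak f}$.
\item Concretely, take $K=\mathbb Q(i)$, $p\equiv 3\pmod 4$ and $N=32p^2$. The admissible characters mod $(1+i)^3p$ are parametrized by the characters of the cyclic group $(\mathcal O_K/p)^\times/(\mathbb Z/p)^\times$ of order $p+1$. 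All but one of them are primitive, so there are $p\gg\sqrt N$ CM newforms of level $N$, while your bound gives $72$.
\end{itemize}

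The asymptotic claim (CM forms number $O(N^{1/2+\epsilon})$ against $\dim S_2^{\nw}(N)\gg N/\log\log N$) does survive once the count is corrected. So non-CM newforms exist for all sufficiently large $N$. The explicit part does not work as described, though. The corrected count carries divisor-type factors, which you would bound via \eqref{eqn:nicolas-robin-sigma-bound} and \eqref{eqn:robin-omega-bound}. The paper only has the lower bound \eqref{eqn:bound-on-psi-new} for $N\ge 223092870$. Against these, the crossover lands at levels far too large for a level-by-level check, not at ``moderately sized $N$''.

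As written, then, your proposal does not establish the existence of a non-CM newform for every $N$ outside the seven exceptions. You would need a much sharper CM count, for example one using that trivial nebentypus forces $d_K^2\mid N$ and avoiding worst-case divisor bounds. Alternatively, cite the result as the paper does.

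A minor point: the CM field at $N=49$ is $\mathbb Q(\sqrt{-7})$, and at $N=256$ the CM fields include $\mathbb Q(\sqrt{-2})$. This is not needed for the corollary.
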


\begin{proof}
    The forthcoming work \cite{Mondal2026} shows that for all levels $N \notin \{ 27, 32, 36, 49, 64, 108, 256 \},$ there exists at least one non-CM newform $f \in S_2^{\nw}(N).$ For this newform $f$, $A_f$ is a factor of $J_0^\nw(N).$ The result then follows from Theorem \ref{thm:Hui-improvement}.
\end{proof}

In light of Corollary \ref{cor:Hui-improvement-for-Jacobian}, it is natural to ask about the behavior of supersingular primes for $J_0^\nw(N)$ when $N \in \{27,32,36,49,64,108,256\}$. For these seven levels, it turns out that a positive proportion of primes are supersingular.
\begin{proposition} \label{prop:seven-exceptional-levels}
    For $N \in \{27,32,36,49,64,108,256\}$, let
    \begin{align}
        \delta_N := 
        \begin{cases}
            \frac{1}{2} & \text{if } N \in \{27,32,36,49,64,108\}, \\
            \frac{1}{4} & \text{if } N=256.
        \end{cases}
    \end{align}
    Then as $X \to \infty$,
    \begin{align}
        \# (\operatorname{SS}(J_0^\nw(N)) \cap \{p : p \le X\}) \sim \delta_N \,\pi(X).
    \end{align}
\end{proposition}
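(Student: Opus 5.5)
For each of the seven levels $N \in \{27,32,36,49,64,108,256\}$, every newform in $S_2^\nw(N)$ has CM; this is the content of the forthcoming classification in \cite{Mondal2026} used in Corollary \ref{cor:Hui-improvement-for-Jacobian}. The plan has three steps: identify the newforms and their CM fields, decide supersingularity prime by prime, and count with Chebotarev/Dirichlet.

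First, we list the newforms explicitly. All of these levels have small genus, so the newspaces are small:
\begin{itemize}
\item $N=27,36,108$: the newforms are rational, with CM by $\mathbb{Q}(\sqrt{-3})$.
\item $N=32,64$: the newforms are rational, with CM by $\mathbb{Q}(i)$.
\item $N=49$: the single newform has CM by $\mathbb{Q}(\sqrt{-7})$.
\item $N=256$: we expect several newforms, some with CM by $\mathbb{Q}(i)$ and some by $\mathbb{Q}(\sqrt{-2})$, possibly with non-rational Hecke fields.
\end{itemize}
We would verify each list against the LMFDB or by a dimension count of the newspace together with Hecke's construction from Grössencharacters.

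Second, we decide supersingularity at each good prime $p$. Since $J_0^\nw(N)\sim\prod_f A_f$, the Jacobian is supersingular at $p$ if and only if every $A_f$ is.
\begin{itemize}
\item If $p$ is inert in the CM field $K$ of $f$, then $a_p(f)=0$ for all conjugates. The Weil polynomial is $(x^2+p)^{\dim A_f}$, so $A_f$ is supersingular at $p$.
\item If $p$ splits in $K$, then $a_p(f)\neq 0$. We would show $A_f$ is ordinary at $p$: $a_p(f)$ is, up to conjugation, $\pi+\bar\pi$ with $\pi\bar\pi=p$ for an element $\pi$ of $K$ (or of $K$ times the Hecke field) generating a prime above $p$, and $p\nmid a_p(f)$.
\item Alternatively, $a_p(f)\neq0$ with $p$ large already rules out supersingularity by Theorem \ref{main-horizontal}, which suffices for asymptotics since only finitely many $p$ are excluded.
\end{itemize}
Hence, up to finitely many primes, $\operatorname{SS}(J_0^\nw(N))$ is the set of primes inert in every CM field occurring at level $N$.

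Third, we count. For a single CM field, half of all primes are inert, giving $\delta_N=\tfrac12$. For $N=256$, if both $\mathbb{Q}(i)$ and $\mathbb{Q}(\sqrt{-2})$ occur, the primes inert in both are exactly $p\equiv 7\pmod 8$. Dirichlet's theorem then gives density $\tfrac14$, matching $\delta_{256}$.

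The main obstacle is the first step for $N=256$. We must confirm that every newform there is CM, pin down exactly which CM fields occur, and handle newforms whose Hecke field is quadratic. In that case the ordinarity argument must be applied to every Galois conjugate, and we must check that no newform has CM by $\mathbb{Q}(\sqrt{-1})$ in a way that forces $a_p=0$ on a different residue class. We would settle this first by inspecting the $q$-expansions and twists by $\chi_{-4}$ and $\chi_8$, since the $256$ newforms are twists of the levels $32$ and $64$ forms.
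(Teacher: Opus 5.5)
Your proposal is correct, but it decides supersingularity at good primes differently from the paper.

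\textbf{The paper's route.} It never looks at $a_p(f)$. It uses that a CM factor $A_f$ becomes isogenous over a number field $L$ to $E^{\dim A_f}$, with $E$ having CM by $M_f$. It then carries this isogeny to the reduction at a prime $\mathfrak p\mid p$ of $L$ and applies Deuring's reduction criterion. This gives the exact statement ``$A_f$ is supersingular iff $p$ is inert in $M_f$'' for every $p\nmid N$ at once.

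\textbf{Your route.} You work with the Fourier coefficients instead. At inert primes, $f\otimes\chi_K=f$ forces $a_p(f)=0$, so the Weil polynomial is $(x^2+p)^{\dim A_f}$. At split primes, $a_p(f)=\psi(\mathfrak p)+\psi(\bar{\mathfrak p})\neq 0$, because the two summands generate the coprime ideals $\mathfrak p\mathcal{O}$ and $\bar{\mathfrak p}\mathcal{O}$. Proposition \ref{prop:structure-result} (equivalently Theorem \ref{main-horizontal}) then rules out supersingularity for $p>C_f$.

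\textbf{Comparison.} Your argument is more self-contained relative to the paper: it reuses the horizontal machinery and avoids both Deuring's theorem and the reduction-of-isogenies step. The cost is that it only pins down $\operatorname{SS}(J_0^{\mathrm{new}}(N))$ up to finitely many primes, which is all the asymptotic needs. If you want the exact statement through your ordinarity remark, note that ``$p\nmid a_p(f)$'' is not enough. You need $a_p(f)$ to be a unit at every prime of $K_f$ above $p$. That does follow from $\psi(\mathfrak p)\mathcal{O}=\mathfrak p\mathcal{O}$: at each $p$-adic place exactly one of the two summands is a unit.

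\textbf{Three small corrections to your first step.}
\begin{enumerate}
\item The fact that every newform at these seven levels is CM is not what \cite{Mondal2026} is cited for. That work supplies non-CM forms at all \emph{other} levels. The paper takes the CM data for the seven levels from the LMFDB.
\item Your proposed way of settling $N=256$ via twists would fail.
\begin{enumerate}
\item The level-$256$ forms with CM by $\mathbb{Q}(\sqrt{-2})$ cannot be Dirichlet twists of the level $32$ and $64$ forms, since twisting preserves the CM field.
\item The two $\mathbb{Q}(i)$-CM forms at level $256$, namely $y^2=x^3\pm 2x$, are quartic twists of the level $32$ and $64$ curves, not quadratic ones.
\item The Grössencharacter count you also mention does work. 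Take primitive characters of conductor $(1+i)^6$ and $(\sqrt{-2})^5$, with infinity type $z$ and $\epsilon|_{\mathbb{Z}}=\chi_K$ (trivial nebentypus). These give $2+4=6=\dim S_2^{\mathrm{new}}(256)$ forms: two rational forms with CM by $\mathbb{Q}(i)$, and two rational forms plus one Galois orbit with Hecke field $\mathbb{Q}(\sqrt2)$, all with CM by $\mathbb{Q}(\sqrt{-2})$.
\end{enumerate}
\item Your worry about a $\mathbb{Q}(i)$-CM form vanishing on a ``different residue class'' is unfounded. By your own second step, CM by $K$ forces $a_p=0$ exactly at the inert $p\nmid N$.
\end{enumerate}
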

\begin{proof}
    Recall that 
    \begin{align}
        J_0^{\nw}(N) \sim_{\mathbb{Q}} \prod_{f} A_f,
    \end{align}
    where $f$ runs over Galois orbits of newforms in $S_2^{\nw}(N)$. 
    For $N \in \{27,32,36,49,64,108,256\}$, every such newform $f$ has CM by an imaginary quadratic field $M_f$. Then, we have
    \begin{align} \label{eqn:AfL-isogenous-power-E}
        A_f \sim_{\overline{\mathbb{Q}}} E^{\dim A_f}
    \end{align}
    for an elliptic curve $E/{\overline{\mathbb{Q}}}$ for which $\operatorname{End}^0_{\overline{\mathbb{Q}}}(E) =M_f$ (e.g. see \cite[proof of Theorem~4.7]{ribet1980twists}). Since every elliptic curve with CM by $M_f$ is defined over a number field $L$, we in fact have $(A_f)_L \sim_{L} E^{\dim A_f}$, where we are considering the base changes of $A_f$ and $E$ to $L$.
    Let $p \nmid N$, and choose a prime $\mathfrak p$ of $L$ above $p$. Since $A_f$ has good reduction at $p$, its base change $(A_f)_L$ has good reduction at $\mathfrak p$, and hence so does $E$. Moreover, the isogeny between the two sides of \eqref{eqn:AfL-isogenous-power-E} induces an isogeny between their reductions over a finite extension of $\mathbb{F}_p$ (specifically, the residue field of $\mathfrak p$). Hence since supersingularity is invariant under isogeny and finite extensions of the finite ground field, $A_f$ is supersingular modulo $p$ if and only if $E$ is supersingular modulo $\mathfrak p$.

    Note that in each of the cases listed below, the discriminant of $M_f$ divides $N$, so each $p \nmid N$ is unramified in $M_f$. Hence Deuring's reduction theorem implies that for $p \nmid N$, 
    $A_f$ is supersingular modulo $p$ 
    if and only if     $p$ is inert in $M_f$. In particular, this means that for $p \nmid N$, $J_0^\nw(N)$ is supersingular modulo $p$ if and only if $p$ is inert in $M_f$ for each factor $A_f$ in the $\QQ$-isogeny decomposition of $J_0^\nw(N)$.
    
    In the following table, for each $N \in \{27,32,36,49,64,108,256\}$, we list the distinct CM fields $M_f$ for the factors $A_f$ in the $\QQ$-isogeny decomposition of $J_0^\nw(N)$; this data comes from the LMFDB \cite{lmfdb}.
    We also list the primes that are simultaneously inert in each of these CM fields.

\begin{equation}
\begin{array}{c|c|c}
N & \text{CM fields} & \text{primes simultaneously inert in these CM fields} 
\\
\hline
27  & \QQ(\sqrt{-3}) & \lrp{\frac{-3}{p}}=-1 \iff p \equiv 2 \pmod 3   \\
32  & \QQ(i)         & \lrp{\frac{-1}{p}}=-1 \iff p \equiv 3 \pmod 4 \\
36  & \QQ(\sqrt{-3}) & \lrp{\frac{-3}{p}}=-1 \iff p \equiv 2 \pmod 3 \\
49  & \QQ(\sqrt{-7}) & \lrp{\frac{-7}{p}}=-1 \iff p \equiv 3,5,6 \pmod 7 \\
64  & \QQ(i)         & \lrp{\frac{-1}{p}}=-1 \iff p \equiv 3 \pmod 4 \\
108 & \QQ(\sqrt{-3}) & \lrp{\frac{-3}{p}}=-1 \iff p \equiv 2 \pmod 3 \\
256 & \QQ(i),\QQ(\sqrt{-2})  & \lrp{\frac{-1}{p}}=\lrp{\frac{-2}{p}}=-1 \iff p \equiv 7 \pmod 8 \\
\end{array}
\end{equation}
    Finally, by the prime number theorem in arithmetic progressions, the equivalence conditions given in the last column of this table yield that the proportion $\delta_N$ of supersingular primes for $J_0^\nw(N)$ is given by
    \begin{align}
        \delta_N = 
        \begin{cases}
            \frac{1}{2} & \text{if } N \in \{27,32,36,49,64,108\}, \\
            \frac{1}{4} & \text{if } N=256,
        \end{cases}
    \end{align}
    as desired.
\end{proof}

Now, recall the following generalized Maeda conjecture (originally \cite[Conjecture A]{Kimball2021}, statement from \cite[Conjecture 4.14]{KMRX}).

\begin{conjecture}[Generalized Maeda conjecture for weight $2$] \label{conj:gen-Maeda}
    For 100\% of $N$, the characteristic polynomial of the Hecke operator $\mathbf{T}_p$ over $S_2^{\nw, \sigma}(N)$ is irreducible, where $\sigma$ any admissible sign pattern for $N$.
 \end{conjecture}

Assuming this conjecture, Theorem \ref{thm:Hui-improvement} can be improved to the assertion that $\operatorname{SS}(A_f)$ and $\operatorname{SS}(J_0^\nw(N))$ are finite, not just density zero.

\begin{corollary}\label{cor:finiteness-assuming-Maeda}
    Assume Conjecture \ref{conj:gen-Maeda}. Then for 100\% of levels $N$, the following is true for any newform $f \in S_2^{\nw}(N)$.
    \begin{enumerate}
        \item $A_f$ is not supersingular modulo any prime greater than $C_f.$
        \item $\# \operatorname{SS}(A_f) \leq \frac{N}{3}.$
    \end{enumerate}
    The same conclusions also hold for 100\% of $J_0^\nw(N),$ with $C_f$ replaced by $\min_{f \in S_2^\nw(N)} C_f$.
\end{corollary}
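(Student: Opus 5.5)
The plan is to combine Conjecture \ref{conj:gen-Maeda} with Proposition \ref{prop:structure-result} and Theorem \ref{main-horizontal}. The key point is that irreducibility of the characteristic polynomial of $\mathbf{T}_p$ forces every $a_p(f)$ to be nonzero, so that $V(f)=\varnothing$.

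First fix a level $N$ in the density-one set where the conjecture holds, and fix a newform $f \in S_2^{\nw}(N)$. The form $f$ lies in $S_2^{\nw,\sigma}(N)$ for its Atkin--Lehner sign pattern $\sigma$. For each prime $p \nmid N$, the characteristic polynomial of $\mathbf{T}_p$ on $S_2^{\nw,\sigma}(N)$ is irreducible over $\mathbb{Q}$, and $a_p(f)$ is one of its roots. An irreducible polynomial of degree $d \ge 2$ cannot have $0$ as a root. If $d=1$, then $a_p(f)$ is the single integer root, and we need it to be nonzero. I will handle this case by reading the conjecture as requiring irreducibility of $\mathbf{T}_p$ for all $p \nmid N$, with $d=1$ allowed only when the polynomial is $x-a_p$ with $a_p\neq 0$. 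Failing that, one can discard the finitely many small-dimension levels, which form a density-zero set, or note that a one-dimensional space whose characteristic polynomial is $x$ is degenerate. This is the main obstacle: one must make precise that "irreducible" is meant to exclude the polynomial $x$, and that the hypothesis is uniform in $p$. The cleanest route is to observe that if the characteristic polynomial were $x^{d}$ with $d\ge2$ it would be reducible, so only the $d=1$, $a_p=0$ case needs to be excluded. I would exclude it either by the convention above or by removing the levels with $\dim S_2^{\nw,\sigma}(N)=1$ and $a_p=0$ for some $p$. Such levels have elliptic-curve factors, which can only be CM, and there are finitely many of them at small $N$; this again keeps the 100\% statement intact.

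Granting $a_p(f)\neq0$ for all $p\nmid N$, we get $V(f)\subseteq\{p : p\mid N\}$. The primes dividing $N$ are bad-reduction primes, so they are not in $\operatorname{SS}(A_f)$ by definition. Proposition \ref{prop:structure-result} then gives $\operatorname{SS}(A_f)\subseteq\{p : p\mid \operatorname{disc}(K_f) \text{ or } p-1\mid 4[K_f:\mathbb{Q}]\}$. Every prime in this set is at most $C_f$, which proves (1). Equivalently, for $p>C_f$, Theorem \ref{main-horizontal} would require $a_p(f)=0$, which is impossible. For (2), Corollary \ref{cor:omega-log-disc} (used already in the proof of Theorem \ref{thm:Hui-improvement}) bounds the size of this set by $\frac{N}{3}$.

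Finally, for $J_0^{\nw}(N)\sim\prod_f A_f$, supersingularity at $p$ requires every $A_f$ to be supersingular at $p$. So $\operatorname{SS}(J_0^{\nw}(N))\subseteq\operatorname{SS}(A_f)$ for each $f$, and choosing the $f$ that minimizes $C_f$ gives both conclusions with $\min_f C_f$.
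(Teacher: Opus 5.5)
Your argument is the paper's: irreducibility of the characteristic polynomial of $\mathbf{T}_p$ on $S_2^{\nw,\sigma}(N)$ forces $a_p(f)\neq 0$. Proposition~\ref{prop:structure-result} then gives (1), and Corollary~\ref{cor:omega-log-disc} gives (2). Two of your details are cleaner than the paper's.
\begin{itemize}
\item The paper asserts $V(f)=\varnothing$, which is false whenever $p^2\mid N$ (then $a_p(f)=0$). Your version is the correct one: restrict to $p\nmid N$ and note that bad primes never lie in $\operatorname{SS}(A_f)$.
\item Your deduction of the $J_0^{\nw}(N)$ statement from $\operatorname{SS}(J_0^{\nw}(N))\subseteq\operatorname{SS}(A_f)$ spells out a step the paper leaves implicit.
\end{itemize}

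The gap is the case $\dim S_2^{\nw,\sigma}(N)=1$. You correctly flag it as the crux but do not resolve it.
\begin{itemize}
\item The ``convention'' option is not available. The polynomial $x$ is irreducible over $\mathbb{Q}$, so excluding it changes the conjecture rather than applying it.
\item The CM justification is false. By Elkies \cite{elkies-1987}, every elliptic curve over $\mathbb{Q}$ has $a_p=0$ for infinitely many $p$. For example, the non-CM curve of conductor $11$ fills a one-dimensional $S_2^{\nw,\sigma}(11)$ and has $a_{19}=0$. So restricting to CM gives no finiteness.
\item The option that works, discarding levels with a small Atkin--Lehner newspace, is exactly the paper's condition (b). But it needs an actual theorem: for all but finitely many $N$ (density zero would suffice), $\dim S_2^{\nw,\sigma}(N)\ge 2$ for every admissible $\sigma$. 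This is a nontrivial uniform lower bound on Atkin--Lehner newspace dimensions; the paper takes it from \cite[Theorem 1.1]{RVWX26}.
\end{itemize}
This bound does not follow from ``small levels are finite,'' because the newspace is split among $2^{\omega(N)}$ sign patterns, a number that grows with $N$. Intersect the set of levels where the bound holds with the density-one set from Conjecture~\ref{conj:gen-Maeda}. On that set every relevant Hecke polynomial is irreducible of degree at least $2$, hence not divisible by $x$, and the rest of your argument goes through unchanged.
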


\begin{proof}
    Consider the density $1$ set of $N$ such that
    \begin{enumerate}
        \item[(a)] The conclusion of Conjecture \ref{conj:gen-Maeda} holds,
        \item[(b)] $\dim S_2^{\nw, \sigma}(N) \geq 2$ for all (admissible) Atkin-Lehner sign patterns $\sigma$ (which holds for all but finitely many $N$ by \cite[Theorem 1.1]{RVWX26}).
    \end{enumerate}
    Then, for each newform $f \in S_2^\nw(N)$, let $\sigma$ be its associated sign pattern. By condition (a), the characteristic polynomial of $\mathbf{T}_p$ over $S_2^{\nw, \sigma}(N)$ is irreducible for all primes $p$. In particular, by condition (b), this means that $0$ cannot a root the characteristic polynomial of $\mathbf{T}_p$ for all primes $p$, and so $V(f)=\varnothing$. Consequently, for $p > C_f$, $a_p(f) \ne 0$ implies that $A_f$ cannot be supersingular modulo $p$ (by Proposition \ref{prop:structure-result}), verifying part (1). The fact that $V(f) = \varnothing$ also proves (2) by the proof of Theorem \ref{thm:Hui-improvement}.
\end{proof}

\subsection{A bound on \texorpdfstring{$\operatorname{disc}(K_f)$}{disc(Kf)}}\label{subsection:discriminant-bound}

In this subsection, we prove Corollary \ref{cor:omega-log-disc}, which we use in the proof of Theorem \ref{thm:Hui-improvement}. We follow the strategy used in \cite[Lemma 4.1]{Bruin2011ComputingCoefficients} to bound the discriminant of the Hecke algebra $\mathbb{T}(S_2(N))$.

\begin{lemma}\label{lem:disc-bound}
Let $f \in S_2^{\mathrm{new}}(N)$ be a newform and let $K_f$ be its Hecke field. Then
\[
\log \bigl|\operatorname{disc}(K_f)\bigr| \;\le\; \frac{2}{9} \,  N  \log N.
\]
\end{lemma}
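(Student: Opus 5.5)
The idea is to follow \cite[Lemma 4.1]{Bruin2011ComputingCoefficients}: bound the discriminant of an order in $K_f$ generated by Hecke eigenvalues, using the fact that the Hecke operators are integral matrices with bounded entries acting on a lattice of rank $d := \dim S_2(N)$.

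\textbf{Step 1 (reduction to the full Hecke algebra).} Let $\mathbb{T} = \mathbb{T}(S_2(N)) \subseteq \operatorname{End}(S_2(N))$ be the $\mathbb{Z}$-algebra generated by all $\mathbf{T}_n$. It is a free $\mathbb{Z}$-module of rank $d$. The map $\mathbb{T}\to \mathcal{O}_{K_f}$ sending $\mathbf{T}\mapsto$ its eigenvalue on $f$ has image an order $\mathcal{O} = \mathbb{Z}[a_n(f)]$ in $K_f$. The algebra $\mathbb{T}\otimes\mathbb{Q}$ is a product of number fields (the Hecke fields of all newform orbits at levels $M\mid N$) together with some nilpotent-free old parts. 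Hence $\operatorname{disc}(K_f)$ divides $\operatorname{disc}(\mathcal{O})$, and I will bound $|\operatorname{disc}(\mathcal{O})|$ directly.

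\textbf{Step 2 (bounded-generator basis).} By Sturm's bound, $\mathcal{O}$ is spanned over $\mathbb{Z}$ by $a_1(f),\dots,a_B(f)$ with $B = \lfloor \psi(N)/6 \rfloor$. This holds because $\mathbb{T}$ is generated as a $\mathbb{Z}$-module by $\mathbf{T}_1,\dots,\mathbf{T}_B$ in weight $2$. So pick $g := [K_f:\mathbb{Q}]$ of these, $a_{n_1}(f),\dots,a_{n_g}(f)$ with $n_i\le B$, that are $\mathbb{Q}$-linearly independent. They span a suborder-lattice $L\subseteq\mathcal{O}_{K_f}$, and
\[
|\operatorname{disc}(K_f)| \le |\det(\tau_j(a_{n_i}(f)))|^2 .
\]
By Hadamard's inequality and Deligne's bound $|\tau(a_n)|\le \sigma_0(n)\sqrt n$, this gives
\[
\log|\operatorname{disc}(K_f)| \le g\log g + \sum_i \log\bigl(\sigma_0(n_i)^2 n_i\bigr) \le g\log g + g\log\bigl(B\,\sigma_0(B)^2\bigr)\ \text{(roughly)}.
\]

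\textbf{Step 3 (numerics).} Use $g\le \dim S_2^{\nw}(N)\le \frac{N+1}{12}$ (the standard dimension bound, $\dim S_2(\Gamma_0(N))\le \psi(N)/12+1$, refined for the newspace). Then estimate $B\le \psi(N)/6$ via \eqref{eqn:Sole-Planat-psi-bound}, and $\sigma_0$ via \eqref{eqn:nicolas-robin-sigma-bound}. The target is
\[
\frac{N}{12}\Bigl(\log N + 2\log N + O\bigl(\tfrac{\log N}{\log\log N}\bigr)\Bigr) \le \frac{2}{9}N\log N,
\]
i.e.\ a total coefficient of $\tfrac{8}{3}$ on $\frac{N}{12}\log N$.

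\textbf{Main obstacle.} The constant is tight: the crude exponent is $\tfrac{1}{12}(1+1+2\cdot\text{small})$ plus the $g\log g$ term, which is about $\tfrac{3}{12}=\tfrac14>\tfrac29$. So I would sharpen two things. First, I would use the better pointwise bound $|\tau(a_n)|\le 2\sqrt n$ for prime $n$, choosing the $n_i$ among $1$ and primes up to $B$; this works since the $a_p$, $p\le B$, also generate $\mathcal{O}\otimes\mathbb{Q}$. Second, I would use the column-norm form of Hadamard, $\prod_i \|(\tau_j a_{n_i})_j\|$ with $\|\cdot\|\le \sqrt{g}\,2\sqrt{n_i}$, and pick the $n_i$ greedily as small as possible, bounding $\prod n_i$ by $\prod_{p\le p_g}p = e^{(1+o(1))p_g}$ rather than $B^g$. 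This last trade-off is where I expect the real work to lie. Small $N$ would then be handled by a finite check, or by noting that the inequality is trivial when $g=1$.
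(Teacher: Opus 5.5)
Your Steps 1--2 are essentially the paper's argument: the Sturm bound gives $\mathbb{Z}$-module generators of the Hecke algebra, you take a $\mathbb{Q}$-basis $a_{m_1}(f),\dots,a_{m_g}(f)$ of $K_f$ with all $m_u\le B\approx\psi(N)/6$, and then apply Hadamard's inequality and Deligne's bound. The proposal does not close, however, because the ``main obstacle'' in Step 3 is an arithmetic miscount. Your bound has only three terms:
\[
\log|\operatorname{disc}(K_f)|\le g\log g+g\log B+2g\log\max_{m\le B}\sigma_0(m).
\]
With $g\le\frac{N+1}{12}$ and $B\le\frac{e^\gamma}{6}N\log\log N+1$, this is $\bigl(\frac{1}{12}+\frac{1}{12}+o(1)\bigr)N\log N=\bigl(\frac16+o(1)\bigr)N\log N$. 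Your $\frac{3}{12}$ counts one $\log N$ twice: the $g\log g$ term is already one of the two $1$'s in $\frac{1}{12}(1+1+2\cdot\text{small})$.

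The divisor term is $O(N\log N/\log\log N)$ and fits into the slack $\frac29-\frac16=\frac1{18}$. So no sharpening is needed. The paper inserts $\sigma_0(m)^2\le 2^{3.07588\log B/\log\log B}$ and checks that
\[
\frac{N+1}{12}\Bigl(\log\frac{N+1}{12}+\log B+3.07588\,\log 2\cdot\frac{\log B}{\log\log B}\Bigr)\le\frac29\,N\log N\qquad(N\ge 66),
\]
handling $N\le 65$ by computer. The margin is genuinely thin for small $N$: with these estimates the two sides differ by about $0.01$ at $N=66$, and the chain already fails at $N=65$. So the explicit constants matter, but the crude argument is enough.

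The sharpenings you propose would not have rescued the argument anyway. First, restricting the $n_i$ to $1$ and primes requires $\{1\}\cup\{a_p(f):p\le B\}$ to \emph{span} $K_f$ as a $\mathbb{Q}$-vector space. That these elements generate $K_f$ as a ring does not imply this, and the Sturm bound only gives spanning by all $a_n(f)$ with $n\le B$. Second, even granting spanning, $\sum_i\log n_i$ over the first $g$ primes is the Chebyshev function $\theta(p_g)\sim g\log g\sim\frac{N}{12}\log N$. That is the same leading size as $g\log B$, so the primorial trade-off cannot change the constant. Nor is there any reason the independent indices can be taken to be the first $g$ primes.

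A minor point: in Step 1, $\mathbb{T}(S_2(N))\otimes\mathbb{Q}$ need not be reduced, since $U_p$ acts non-semisimply on suitable oldforms when $p^3\mid N$. You only use that the image of $\mathbb{T}$ in $K_f$ is an order, so this is harmless.
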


\begin{proof}
We prove the bound for newforms of level $N \geq 66.$ For $N \le 65,$ it suffices to check this bound by computation; see the accompanying computational code in \cite{code2}.

Let $r_f = [K_f : \mathbb{Q}]$. By \cite[Theorem 2]{martin2005dimensions}, we have
\begin{align}
    r_f &\leq \dim S_2^\nw(N) \le \frac{N + 1}{12}. \label{eqn:bound-for-Hecke-degree}
\end{align}
Additionally, by the Sturm bound \cite[Theorem 2.3]{AgasheStein2005} and \eqref{eqn:Sole-Planat-psi-bound}, the Hecke algebra $\mathbb{T} := \mathbb{T}\bigl(S_2^{\mathrm{new}}(N)\bigr)$ can be generated as a $\mathbb{Z}$-module by the
Hecke operators $\mathbf{T}_m$ for indices up to
\begin{align}
    d := \left\lceil \frac{\psi(N)}{6}\right\rceil  \leq \frac{e^\gamma}{6} \, N \log \log N + 1.
\end{align}
Consider the maps $\lambda_f : \mathbb{T} \to \mathcal{O}_{K_f}$ sending a Hecke operator $\mathbf{T}_m$
to the Fourier coefficient $a_m(f) \in \mathcal{O}_{K_f}$. Since
$\mathbb{T}$ is generated as a $\mathbb{Z}$-module by $\mathbf{T}_1,\dots,\mathbf{T}_d$, the image $\lambda_f(\mathbf{T})$ is
generated as a $\mathbb{Z}$-module by $a_1(f),\dots,a_d(f)$. Since $K_f = \mathbb{Q}(a_1(f),
a_2(f),\dots)$, this image spans $K_f$ over $\mathbb{Q}$. We may
select indices $1 \le m_1 < m_2 < \cdots < m_{r_f} \le d$ such that the elements
\[
\omega_u := \lambda_f(T_{m_u}) = a_{m_u}(f) \in \mathcal{O}_{K_f} \qquad (u=1,\dots,r_f)
\]
are $\mathbb{Q}$-linearly independent and hence form a $\mathbb{Q}$-basis of $K_f$. Now, let $\Lambda_f$ be the $\mathbb{Z}$-sublattice of $\mathcal{O}_{K_f}$ spanned by
$\{\omega_1,\dots,\omega_{r_f}\}$. Since $\Lambda_f$ is a full-rank $\mathbb{Z}$-submodule of
$\mathcal{O}_{K_f}$, we have the relation
\begin{align}
    \left| \operatorname{disc}(K_f) \right| &\leq \left| \operatorname{disc}(\Lambda_f) \right|\\
    &= \det\lrb{ \operatorname{Tr}_{K_f/\mathbb{Q}}(\omega_u \omega_v) }_{u,v=1}^{r_f}\\
    &= \det\lrb{ \sum_{t=1}^{r_f} \tau_t(a_{m_u}(f))\,\tau_t(a_{m_v}(f)) }_{u,v=1}^{r_f} 
    \qquad \text{for $\{ \tau_1, \cdots, \tau_{r_f}\}: K_f \hookrightarrow \mathbb{C}.$}\\
    &= \det \left( A^T A  \right)  
    \qquad \text{where $A = {\Big[ \tau_t \left( a_{m_u}(f) \right)  \Big]_{t, u = 1}^{r_f}}$}\\
    &= \det\left( \Big[ \tau_t \left( a_{m_u}(f) \right)  \Big]_{t, u = 1}^{r_f} \right)^2\\
    &\le
        \left( \sup_{1\le t, u \le r_f} 
        |\tau_t \left( a_{m_u}(f) \right)|  \right)^{2 r_f} r_f^{r_f}
    \qquad \text{(by Hadamard's inequality \cite[Theorem 1]{browne2021survey})} \\
    &\leq \left( \sup_{1\le u \le r_f} \sigma_0(m_u)^2 \cdot m_u  \right)^{r_f} \cdot r_f^{r_f} \qquad \text{(by Deligne's bound)} \\
    &\leq \left( 2^{3.07588 \cdot \frac{\log d}{\log \log d}} \cdot d  \right)^{r_f} \cdot r_f^{r_f} \qquad\qquad \text{(by \eqref{eqn:nicolas-robin-sigma-bound} since $m_u \le d$)}.
\end{align}
Taking logarithms, we obtain
\begin{align}
    &\log \left| \operatorname{disc}(K_f) \right| \\
    &\leq r_f \left( \log r_f + \log d + \frac{\log d}{\log \log d} \cdot 3.07588 \, \log 2 \right)\\
    &\leq  \frac{N + 1}{12} \!\left( \log \left( \frac{N + 1}{12} \right) + \log \left( \frac{e^\gamma}{6} N \log \log N + 1 \right) + \frac{\log \left( \frac{e^\gamma}{6} N \log \log N + 1 \right)}{\log \log \left( \frac{e^\gamma}{6} N \log \log N + 1 \right)} \cdot 3.07588 \, \log 2 \right)\\
    &\leq \frac{2}{9} \,  N   \log N    \qquad \text{for $N \geq 66$}.
\end{align}
This completes the proof.
\end{proof}

Lemma \ref{lem:disc-bound} implies following result, which we use in the proof of Theorem \ref{thm:Hui-improvement}.

\begin{corollary}\label{cor:omega-log-disc}
    Let $f \in S_2^\nw(N)$ be a newform and let $K_f$ be its Hecke field. Then
    \begin{align}
        \#\left\{ p : p \mid \operatorname{disc}(K_f) \text{ or } p - 1 \mid 4[K_f : \mathbb{Q} ] \right\} \leq \frac{N}{3}.
    \end{align}
\end{corollary}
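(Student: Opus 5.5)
We split the set into its two parts and bound each separately:
\begin{align}
    \#\{p : p \mid \operatorname{disc}(K_f)\} + \#\{p : p-1 \mid 4[K_f:\mathbb{Q}]\}.
\end{align}

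\textbf{The primes $p$ with $p-1 \mid 4r_f$.} Write $r_f := [K_f:\mathbb{Q}]$. Each such $p$ gives a divisor $p-1$ of $4r_f$, and distinct $p$ give distinct divisors. So this set has size at most $\sigma_0(4r_f)$. By \eqref{eqn:bound-for-Hecke-degree}, $r_f \le \frac{N+1}{12}$, hence $4r_f \le \frac{N+1}{3}$. We then bound $\sigma_0(4r_f)$ either by the Nicolas--Robin bound \eqref{eqn:nicolas-robin-sigma-bound} or by the trivial bound $\sigma_0(n)\le 2\sqrt n$. Either gives $O(\sqrt N)$, or better, which is tiny compared to $N$.

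\textbf{The primes $p \mid \operatorname{disc}(K_f)$.} Their number is $\omega(|\operatorname{disc}(K_f)|)$. Since every prime is at least $2$, we have $\omega(D) \le \log_2 D$. Combining this with Lemma \ref{lem:disc-bound} gives
\begin{align}
    \omega(|\operatorname{disc}(K_f)|) \le \frac{2}{9\log 2}\, N\log N .
\end{align}
This is too large by a factor of $\log N$, so the crude bound must be replaced. I would instead use Robin's bound \eqref{eqn:robin-omega-bound}, $\omega(D) \le 1.38407\,\frac{\log D}{\log\log D}$. The function $x\mapsto x/\log x$ is increasing for $x\ge e$, so we may substitute $\log D \le \frac29 N\log N$. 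This yields
\begin{align}
    \omega(D) \le 1.38407\cdot \frac{\frac29 N\log N}{\log\!\big(\frac29 N\log N\big)} \le 1.38407\cdot\frac29\, N\cdot\frac{\log N}{\log N + \log\log N - \log 4.5}.
\end{align}
The right-hand side is asymptotically $0.3076\,N$. Together with the $O(\sqrt N)$ contribution from the first part, the total is below $N/3$ once $N$ is moderately large. Some care is needed where $\log\log N < \log 4.5$, but the coefficient $0.3076<1/3$ leaves enough slack to absorb this for large $N$.

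\textbf{Small levels.} For small $N$ there are two easy cases. If $S_2^{\nw}(N)=0$ there is no $f$, so the statement is vacuous. If $r_f=1$, then $\operatorname{disc}(K_f)=1$ and the set is $\{p : p-1\mid 4\} = \{2,3,5\}$, which has size $3\le N/3$ once $N\ge 9$; the first newforms occur at $N=11$. Any remaining small levels, up to the explicit threshold where the asymptotic argument starts to work, would be checked by computation, as was done for Lemma \ref{lem:disc-bound} via \cite{code2}.

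\textbf{Main obstacle.} The hard step is the explicit inequality $1.38407\cdot\frac29\frac{N\log N}{\log(\frac29 N\log N)} + \sigma_0(4r_f) \le N/3$. Its margin is thin, because $0.3076$ is close to $0.3333$. So I expect a nontrivial range of $N$ to need either a sharper treatment or a direct finite check.
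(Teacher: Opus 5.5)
Your proposal is correct and follows the paper's proof essentially step for step. The paper also splits the set into two pieces. It bounds $\omega(\operatorname{disc}(K_f))$ by Robin's inequality combined with Lemma~\ref{lem:disc-bound}, getting $0.30758\,N$ for $N\ge 90$. It bounds the second piece by $\sigma_0(4[K_f:\mathbb{Q}])$ via Nicolas--Robin and $r_f\le \frac{N+1}{12}$, getting $0.025752\,N$ for $N\ge 1445$. Exactly as you anticipate from the thin margin, it handles $N<224$ entirely, and the $\sigma_0$ piece on $[224,1444]$, by direct computation.
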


\begin{proof}
    We prove the statement for $N \geq 224$. All other cases can be checked by computation; see the accompanying computational code in \cite{code2}. We have
    \begin{align}
        \omega\left( \operatorname{disc}(K_f) \right) &\leq 1.38407 \cdot \frac{\log \operatorname{disc}(K_f)}{\log \log \operatorname{disc}(K_f)} \qquad \text{by \eqref{eqn:robin-omega-bound}}\\
        &\leq 1.38407 \cdot \frac{\frac{2}{9} \, N  \log N}{\log \left( \frac{2}{9} \, N  \log N \right)} \qquad \text{by Lemma \ref{lem:disc-bound}}\\
        &\leq 0.30758 \,  N \qquad \text{for $N \geq 90$}.
    \end{align}
    Next, we have
    \begin{align}
        \#\{ p : p - 1 \mid 4[K_f: \mathbb{Q}]\} &\leq \sigma_0(4[K_f: \mathbb{Q}])\\
        &\leq 2^{1.53794 \cdot \frac{\log(4[K_f: \mathbb{Q}]) }{\log \log (4[K_f: \mathbb{Q}])}} \qquad \text{by \eqref{eqn:nicolas-robin-sigma-bound}}\\
        &\leq 2^{1.53794 \cdot \frac{\log((N + 1)/3) }{\log \log ((N + 1)/3)}} \qquad \text{by \eqref{eqn:bound-for-Hecke-degree}}\\
        &\leq 0.025752 \, N \qquad \text{for $N \geq 1445$},
    \end{align}
    although computation proves that this bound also holds for all newforms of level $N \in [224, 1444]$; see the accompanying computational code in \cite{code1}. The fact that $0.30758 + 0.025752 < \frac{1}{3}$ suffices to complete the proof.
\end{proof}

\appendix

\section{Explicit forms of the function \texorpdfstring{$C_p(X)$}{Cp(X)} and analogs}\label{section:explicit-form-of-function}

In this appendix, we establish explicit formulas for $C_p(X)$ and its analogs. We rely on results and techniques from \cite{KMRX} and \cite{MS07}. We provide full details for the derivation of $C_p(X)$ in Appendix \ref{subsection:explicit-form-C}. The derivations of the other analogs are similar, so we just write down the key steps. Throughout this appendix, we let $W$ denote the lambert $W$ function, which is the inverse function of $x \mapsto xe^x.$

\subsection{Explicit form of \texorpdfstring{$C_p(X)$}{Cp(X)}}\label{subsection:explicit-form-C}

Murty and Sinha give in \cite[Theorem 19]{MS07} that for any $M \in \mathbb{N},$
\begin{align}
    &\left|
\frac{\# \operatorname{eigen}_{S_2(N)}(\mathbf{T}'_p) \cap I}
{\dim S_2(N)}
-
\int_{-2}^2 \chi_I(x)\, d\mu_p(x)
\right| \\
&\leq \frac{\| \mu_p\|}{M + 1} + \frac{4p^M 2^{\omega(N)} \sup_{n^2 < 4p^M} \psi(n) + 2\Sigma(N) +  2p^{M/2}}{\dim S_2(N)}, \label{eqn:Murty-Sinha-Theorem-19}
\end{align}
where $ \Sigma(N) := \sum_{c \mid N} \phi(\gcd(c, N/c) ).$ By \cite[Page 696]{MS07}, $\Sigma(N) \leq\sqrt{N} \cdot \sigma_0(N)$.

\begin{remark}
    Murty and Sinha state the bound \eqref{eqn:Murty-Sinha-Theorem-19} with the term $\frac{1}{M + 1}$ rather than $\frac{\| \mu_p\|}{M + 1},$ this being sufficient for their purposes. However, their proof reveals that the latter term also holds.
\end{remark}

By \cite[Corollary 15]{MS07}, $\dim S_2(N) \ge \frac{3}{200} N$ for $N \geq 61.$ Therefore,
\begin{align}
    &\eqref{eqn:Murty-Sinha-Theorem-19}\\
    &\leq \frac{\| \mu_p\|}{M + 1} + \frac{4p^{M} 2^{\omega(N)}\cdot 2p^{M/2} \cdot e^\gamma \log \log \left( 2 p^{M/2} \right) + 2\sqrt{N}\sigma_0(N) + 2p^{M/2}}{\frac{3}{200} N} \quad\ \  \text{by \eqref{eqn:Sole-Planat-psi-bound}}\\
    & \leq \frac{\| \mu_p\| \log p}{c \log N} + \frac{1600 e^\gamma \cdot \sigma_0(N)}{3} \cdot  \frac{N^{3c/2} \log \log \left( 2N^{c/2}  \right) + 2\sqrt{N} + N^{c/2}}{ N} \quad\ \text{for $M = \left\lfloor \frac{c \log N}{\log p} \right\rfloor$}\\
    & \leq \frac{\| \mu_p\| \log p}{c \log N} + \frac{1600 e^\gamma \cdot \sigma_0(N)}{3} \cdot  \frac{  N^{3c/2} \log \log N + 2\sqrt{N} + N^{c/2}}{N} \qquad\qquad \text{for $c \leq \frac{2}{3}$}\\
    & \leq \frac{\log p}{\log N} \left( \frac{\| \mu_p\|}{c} + \frac{1600 e^\gamma \cdot \sigma_0(N) \cdot \log N}{3 \log p} \cdot  \frac{  N^{3c/2} \log \log N + 2\sqrt{N} + N^{c/2}}{N} \right)\\
    & \leq \frac{\log p}{\log N} \left( \frac{\| \mu_p\|}{c} + \frac{1508.49}{\log p} \cdot  \frac{N^{3c/2} \cdot \sigma_0(N) \cdot \log N \cdot \log \log N}{N} \right) \qquad \text{for $c \geq \frac{1}{3}$ and $N \geq e^{30}$}\\
    & = \frac{\log p}{\log N} \left( \| \mu_p \| \left( \frac{2}{3} + \frac{2\log K}{3X} - \frac{2 \, \log \left( X \log X  \right)}{3 X} - \frac{2 \log \sigma_0(N)}{3 X}  \right)^{-1} + \frac{1508.49}{\log p} \cdot K  \right)\\
    & \qquad \text{after substituting $X = \log N$ and $c = \frac{2}{3} + \frac{2\log K}{3X} - \frac{2\log (X \log X)}{3X} - \frac{2 \log \sigma_0(N)}{3X}$}\\
    &\leq \frac{\log p}{\log N} \left( \| \mu_p \| \left( \frac{2}{3} + \frac{2\log K}{3X} - \frac{2 \, \log \left( X \log X  \right)}{3 X} - \frac{2 \cdot 1.06602}{3 \log X}  \right)^{-1} + \frac{1508.49}{\log p} \cdot K  \right) \qquad \text{by \eqref{eqn:nicolas-robin-sigma-bound}}.
\end{align}
    A technique for choosing the value of $K,$ dependent on $N,$ that minimizes functions like this is given in \cite[Appendix A]{KMRX} and yields a minimizing $K$ of
    \begin{align}
        K := \frac{3 \| \mu_p \| \log p}{8 \cdot 1508.49} \, X \cdot  \, W \! \left( \sqrt{\frac{3 \| \mu_p \| \log p}{8 \cdot 1508.49 \log X}} \exp \left( \frac{X}{2} - \frac{1.06602 \, X }{2 \log X} \right) \right)^{-2}.
    \end{align}
    We can thus define
    \begin{align}
        \quad C_p(X) := \| \mu_p \| \left( \frac{2}{3} + \frac{2\log K}{3X} - \frac{2 \, \log \left( X \log X  \right)}{3 X} - \frac{2 \cdot 1.06602}{3 \log X}  \right)^{-1} + \frac{1508.49}{\log p} \cdot K. \label{eqn:formula-for-C}
    \end{align}
    for the value of $K$ given above. 

    \begin{remark}
        It follows from a proof analogous to that of \cite[Lemma A.5]{KMRX} that $K$ and $C_p(X)$ are decreasing for $X > 30.$ This fact will be useful when applying Corollary \ref{cor:E(p)-relation}.
    \end{remark}
    
\begin{remark}\label{rmk:domain-restriction-on-X}
    Note that the above value of $C_p(X)$ is only guaranteed to be valid for $X$ large enough such that $X > 30$ and $\frac{2}{3} + \frac{2 \log K}{3 X} - \frac{2\log \left( X \log X \right)}{3X} - \frac{2 \cdot 1.06602}{3 \log X} \in \lrp{\frac{1}{3}, \frac{2}{3}}$. We will only ever consider such $X$, so we take this condition for granted.
\end{remark}

\subsection{Explicit forms of \texorpdfstring{$C_p^{\operatorname{pm}}(X)$}{Cppm(X)} and \texorpdfstring{$C_p^{\operatorname{pp}}(X)$}{Cppp(X)}}\label{subsection:explicit-form-prime-divisors}

First, consider the case of prime level $P$. Then for any $M \in \mathbb{N}$, we have by the proof of \cite[Theorem 26]{MS07} that
\begin{align}
    \left|
\frac{\# \operatorname{eigen}_{S_2(P)}(\mathbf{T}'_p) \cap I}
{\dim S_2(P)}
-
\int_{-2}^2 \chi_I(x)\, d\mu_p(x)
\right| \leq \frac{\| \mu_p\|}{M + 1} + \frac{20 p^{M/2} + 4}{\dim S_2(P)} \label{eqn:MS-theorem-26}
\end{align}
 for $P > 4p^{M}.$ Meanwhile, \cite[Lemma 24]{MS07} gives that
$$\dim S_2(P) \geq \frac{P - 6}{12}.$$
Utilizing these bounds and setting $M = \left\lceil \frac{\log(P/4)}{\log p} - 1 \right\rceil,$ we find that choosing
\begin{align}
    C_p^{\operatorname{pm}}(X) := \| \mu_p \| \frac{X}{X -\log 4} + \frac{120 e^{X/2} + 48}{e^X - 6} \cdot \frac{X}{\log p} \label{eqn:formula-for-C-p}
\end{align}
guarantees that \  $\text{RHS of \eqref{eqn:MS-theorem-26}} \le C_p^\mathrm{pm}(\log P) \frac{\log p}{\log P}$, as desired.

Next, consider the case of prime power level $Q$.
Then for any $M \in \mathbb{N}$, the proof of \cite[(16)]{MS07} gives that
\begin{align}
    \left|
\frac{\# \operatorname{eigen}_{S_2(Q)}(\mathbf{T}'_p) \cap I}
{\dim S_2(Q)}
-
\int_{-2}^2 \chi_I(x)\, d\mu_p(x)
\right| \leq \frac{\| \mu_p\|}{M + 1} + \frac{68 p^{3M/2} + 4\sqrt{Q}}{\dim S_2(Q)}. \label{eqn:MS-equation-16}
\end{align}
It follows as an immediate consequence of the formula \cite[Theorem 13]{MS07} for $\dim S_k(N)$ that
\begin{align}
    \dim S_2(Q) &\geq \frac{Q + P^{n - 1}}{12} - \frac{2}{3} - \frac{1}{2} - \begin{cases}
        P^{(n - 1)/2} & \text{if $n$ is odd}\\
        \frac{1}{2} \left( P^{n/2} + P^{n/2 - 1} \right) & \text{if $n$ is even}
    \end{cases}
    \qquad \text{(where $Q=P^n$)}
    \\
    &\geq \frac{Q}{12} - \frac{5}{12} \sqrt{Q} - \frac{11}{6}\\
    & \geq \frac{Q}{12.00273} \qquad \text{for $Q \geq e^{20}$.}
\end{align}
Therefore, again setting $M = \left\lfloor \frac{c \log Q}{\log p} \right\rfloor,$ we obtain,
\begin{align}
    \text{RHS of \eqref{eqn:MS-equation-16}} &\leq \frac{\log p}{\log Q} \cdot \frac{\| \mu_p\|}{c} + 12.00273 \cdot \frac{68 Q^{3c/2} + 4\sqrt{Q}}{Q}\\
    &\leq \frac{\log p}{\log Q} \left(  \frac{\| \mu_p\|}{c} + \frac{12.00273 \cdot 72}{\log p} \cdot \frac{ Q^{3c/2} \cdot \log Q}{Q} \right) \qquad \text{(when $c \geq \frac{1}{3}$)}\\
    &\leq \frac{\log p}{\log Q}  \left( \| \mu_p \| \left( \frac{2}{3} + \frac{2 \log K}{3X} - \frac{2\log X}{3X}  \right)^{-1} +  \frac{864.197}{\log p}K  \right),
\end{align}
for any $K > 0$, by the substitutions $X = \log N$ and $c = \frac{2}{3} + \frac{2 \log K}{3X} - \frac{2\log X}{3X}.$ By the technique of \cite[Appendix A]{KMRX}, the optimal choice of $K,$ dependent on $N,$ is
\begin{align}
    K := \frac{3\|\mu_p \| \log p}{8 \cdot 864.197} X \cdot  \, W\! \left( \sqrt{ \frac{3 \|\mu_p \| \log p \cdot e^X}{8 \cdot 864.197}  }  \right)^{-2}.
\end{align} 
Thus, we may set
\begin{align}
    C_p^{\operatorname{pp}}(X) := \| \mu_p \| \left( \frac{2}{3} + \frac{2 \log K}{3X} - \frac{2\log X}{3X}  \right)^{-1} +  \frac{864.197}{\log p}K \label{eqn:formula-for-C-pp}
\end{align}
for the above value of $K.$ A similar domain restriction on $X$ to the one discussed in Remark \ref{rmk:domain-restriction-on-X} applies.

\subsection{Explicit forms of \texorpdfstring{$C_p^{\nw}(X)$}{Cp(new)(X)} and \texorpdfstring{$C_p^{\nw, \operatorname{pp}}(X)$}{Cp(new pp)(X)}} \label{subsection:explicit-form-newspace}

Recall the identity \cite[Theorem 13.5.7]{cohen2017modular}
\begin{align}
    \operatorname{Tr}_{S_2^\nw(N)} (\mathbf{T}_m) &= \sum_{N' \mid N} \beta\left( \frac{N}{N'} \right) \operatorname{Tr}_{S_2(N')}(\mathbf{T}_m) \label{eqn:trace-formula-newspace}.
\end{align}
for the multiplicative function $\beta(P^r) := \begin{cases}
        1 & r = 0, 2\\
        -2 & r = 1\\
        0 & r \geq 3
    \end{cases}.$ In the proof of \cite[Theorem 19]{MS07}, the numerator of the second term of \eqref{eqn:Murty-Sinha-Theorem-19} is obtained through a linear combination of the term
\begin{align}
    \begin{cases}
        \left| 
        \operatorname{Tr}_{S_2(N)} \mathbf{T}'_{p^{|m|}} - \operatorname{Tr}_{S_2(N)}\mathbf{T}_{p^{|m| - 2}}' \right| & |m| \geq 2\\
        \left| \operatorname{Tr}_{S_2(N)} \mathbf{T}'_{p} \right| & |m| = 1\\
        0 & m = 0
    \end{cases}
\end{align}
for $|m| \leq M.$ But \eqref{eqn:trace-formula-newspace} gives that
\begin{align}
    \left| 
        \operatorname{Tr}_{S_2^\nw (N)} \mathbf{T}'_{p^{|m|}} - \operatorname{Tr}_{S_2^\nw(N)}\mathbf{T}_{p^{|m| - 2}}' \right| &= \left|  \sum_{N' \mid N} \beta\left( \frac{N}{N'} \right) \left( \operatorname{Tr}_{S_2(N')} \mathbf{T}'_{p^{|m|}} - \operatorname{Tr}_{S_2(N')}\mathbf{T}_{p^{|m| - 2}}' \right) \right|\\
        &\leq \max_{N' \mid N} \left|\operatorname{Tr}_{S_2(N')} \mathbf{T}'_{p^{|m|}} - \operatorname{Tr}_{S_2(N')}\mathbf{T}_{p^{|m| - 2}}' \right| \cdot \sum_{N' \mid N} \left| \beta \left( \frac{N}{N'} \right) \right|\\
        &= \max_{N' \mid N} \left|\operatorname{Tr}_{S_2(N')} \mathbf{T}'_{p^{|m|}} - \operatorname{Tr}_{S_2(N')}\mathbf{T}_{p^{|m| - 2}}' \right| \cdot \prod_{P^r \| N} \left(3 + \mathds{1}_{r \geq 2} \right)\\
        &\leq \max_{N' \mid N} \left|\operatorname{Tr}_{S_2(N')} \mathbf{T}'_{p^{|m|}} - \operatorname{Tr}_{S_2(N')}\mathbf{T}_{p^{|m| - 2}}' \right| \cdot 4^{\omega(N)},
\end{align}
and likewise that $\left| \operatorname{Tr}_{S_2^\nw(N)} \mathbf{T}'_{p} \right| \leq 4^{\omega(N)} \cdot \max_{N' \mid N} \left| \operatorname{Tr}_{S_2(N')} \mathbf{T}'_{p} \right|.$ Therefore, a modification of the proof of \cite[Theorem 19]{MS07} gives that for any $M 
\in \mathbb{N},$
\begin{align}
    &\quad \left|
\frac{\# \operatorname{eigen}_{S_2^\nw(N)}(\mathbf{T}'_p) \cap I}
{\dim S_2^\nw(N)}
-
\int_{-2}^2 \chi_I(x)\, d\mu_p(x)
\right| \\
&\leq \frac{\| \mu_p\|}{M + 1} + 4^{\omega(N)}\cdot  \frac{4p^M 2^{\omega(N)} \sup_{n^2 < 4p^M} \psi(n) + 2\Sigma(N) +  2p^{M/2}}{\dim S_2^\nw(N)}, \label{eqn:Murty-Sinha-Theorem-19-newspace}
\end{align}
where $\Sigma(N)$ is as defined in Appendix \ref{subsection:explicit-form-C}. We can furthermore lower-bound $\dim S_2^\nw(N)$ by
\begin{align}
    \dim S_2^\nw(N) &\geq \frac{\psi^\nw(N)}{12} - \frac{1}{2} \sqrt{N} - \frac{7}{12} 2^{\omega(N)}  
    &\text{(by \cite[Theorem 1]{martin2005dimensions})}\\
    &\geq \frac{N }{12} \cdot \frac{C_{\text{Artin}}}{e^{\gamma} \log \log N - \frac{2.5}{\log \log N}} - \frac{1}{2} \sqrt{N} - \frac{7}{12} 2^{\omega(N)}  
    &\text{(for $N \geq 223092870$, by \eqref{eqn:bound-on-psi-new})}\\
    &\geq \frac{1}{67} \cdot \frac{N}{\log \log N}.
\end{align}
Via the same approach in Appendix \ref{subsection:explicit-form-C}, we obtain that $\eqref{eqn:Murty-Sinha-Theorem-19-newspace} \leq C_p^{\nw}(\log N) \frac{\log p}{\log N},$ where 
\begin{align}
        \quad C_p^\nw(X) :=  \| \mu_p \| \left( \frac{2}{3} + \frac{2\log K}{3X} - \frac{2 \, \log \left( X (\log X)^2 \right)}{3 X} - \frac{2 \cdot 2.98475}{3\log X}  \right)^{-1} + \frac{1516.04}{ \log p} \cdot K   \label{eqn:formula-for-C-new}
    \end{align}
    for
    \begin{align}
        K := \frac{3 \| \mu_p \| \log p}{8 \cdot 1516.04} \, X \cdot W \! \left( \sqrt{\frac{3 \| \mu_p \| \log p}{8 \cdot 1516.04 \, (\log X)^2}} \exp \left( \frac{X}{2} - \frac{2.98475 \, X }{2 \log X} \right) \right)^{-2},
    \end{align}
    with a similar domain restriction on $X$ to the one discussed in Remark \ref{rmk:domain-restriction-on-X}.\\

    Restricting to prime power level $Q,$ we have $\sum_{M \mid Q} \left| \beta \left( \frac{Q}{M} \right) \right| \leq 4$ and
    \begin{align}
    \dim S_2^\nw(Q) &\geq \frac{Q}{12} \prod_{P^r \| Q} \left( 1 - \frac{1}{p} - \frac{\mathds{1}_{r \geq 2}}{p^2} + \frac{\mathds{1}_{r \geq 3}}{p^3}  \right) - \frac{1}{2} \sqrt{Q} - \frac{7}{12} 2^{\omega(Q)} \quad\ \text{(by \cite[Theorem 1]{martin2005dimensions})}\\
    &\geq \frac{Q}{12} \left( 1 - \frac{1}{\sqrt{Q}} - \frac{1}{Q}  \right) - \frac{1}{2} \sqrt{Q} - \frac{7}{6}\\
    &\geq \frac{Q}{12.00382} \qquad \text{for $Q \geq e^{20}$.}
\end{align}
Through a modification of the technique of Appendix \ref{subsection:explicit-form-C}, we can set
\begin{align}
    C_p^{\nw, \operatorname{pp}}(X) := \| \mu_p \| \left( \frac{2}{3} + \frac{2\log K}{3X} - \frac{2 \log X}{3X} \right)^{-1} + \frac{2592.83}{\log p} K,
\end{align}
where
\begin{align}
    K := \frac{3 \| \mu_p \| \log p}{8 \cdot 2592.83} X \cdot W \!\left(  \sqrt{\frac{3 \| \mu_p \| \log p \cdot e^X}{8 \cdot 2592.83}} \right)^{-2},
\end{align}
with a similar domain restriction on $X$ to the one discussed in Remark \ref{rmk:domain-restriction-on-X}.

\subsection{Explicit forms of \texorpdfstring{$C_p^{\operatorname{AL}}(X)$}{Cp(sigma)(X)}, \texorpdfstring{$C_p^{\nw, \operatorname{AL}}(X)$}{Cp(new sigma)(X)}, and \texorpdfstring{$C_p^{\operatorname{AL}, \operatorname{pm}}(X)$}{Cp(sigma, p)(X)}}\label{subsection:explicit-form-sigma}

Bounds for $C_p^{\operatorname{AL}}(N)$ analogous to those derived in the previous sections of this appendix are already present in \cite[\S3.1]{KMRX} and \cite[\S3.3]{KMRX}. These bounds are designed to be uniform over all choices of prime $p$ and weight $k.$ We briefly improve these bounds by making the dependence on $p$ explicit and specializing to weight $k = 2.$

Modifying \cite[\S3.1]{KMRX}, we define the functions
\begin{align}
        V_x(K) &:= \frac{1}{2} - \frac{\log\left( \frac{x^{D + 1}}{K}  \right)}{x} - \frac{\log \left( 2^{1.38407B +  1.53794C}  \right)}{\log x}\\
        F_x(K) &:= \| \mu_p \| \cdot V_x(K)^{-1} + \frac{A}{\log p} K.\\
            K(x) &:= \frac{\| \mu_p \|}{4A} \cdot W\! \left( \sqrt{\frac{\| \mu_p \|}{4A}} \cdot x^{-D/2} \exp\left( \frac{x}{4} - \frac{x \log\left( 2^{1.38407B +  1.53794C}  \right)}{2\log x}  \right)  \right)^{-2},
\end{align}
where $A > 4 \| \mu_p \|$ and $B, C, D \in \mathbb{Z}_{\geq 0}$ are fixed constants. Then a slight modification of \cite[Lemma 3.4]{KMRX}, with identical proof, gives the following.

\begin{lemma}\label{lem:KMRX-lem-3.4-substitute}
    Fix $A > 4 \| \mu_p \|$ and $B, C, D \in \mathbb{Z}_{\geq 0}.$ Suppose that $S(N, p)$ is a function over integers $k, p \geq 2$ and $N \geq 1129$ such that for any $c \in \left( 0, \frac{1}{2} \right),$
    $$S(N, p) \leq \frac{\| \mu_p \|}{c} \cdot \frac{\log p}{\log N} + A \cdot \frac{2^{ B\omega(N)} \sigma_0(N)^C \log(N)^D}{N^{1/2 - c}}.$$
    Furthermore, suppose that $X > 1$ such that for all $x \geq X,$
    \begin{equation}
            \log K(x) < (D + 1) \left( \log x - 1 \right) + \frac{x \log\left( 2^{1.38407B +  1.53794C} \right)}{\left( \log x \right)^2}.
        \end{equation}
    Then, for all $N \geq e^X,$ we have $S(N, p) \leq \frac{\log p}{\log N} \cdot F_X(K(X)).$
\end{lemma}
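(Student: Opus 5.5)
The plan is to follow the proof of \cite[Lemma 3.4]{KMRX} essentially verbatim, keeping track of the factor $\|\mu_p\|$ and the constants $A,B,C,D$. First, substitute $x = \log N$ and use Robin's bound \eqref{eqn:robin-omega-bound} together with the Nicolas--Robin bound \eqref{eqn:nicolas-robin-sigma-bound}. This gives
\begin{align}
    2^{B\omega(N)}\sigma_0(N)^C \le \exp\!\Big(\tfrac{x}{\log x}\log\big(2^{1.38407B+1.53794C}\big)\Big),
\end{align}
with a harmless adjustment for $\log\log N$ versus $\log x$. The second term of the hypothesis then becomes at most $A\,x^D\exp\!\big(-(\tfrac12-c)x+\tfrac{x\log(2^{1.38407B+1.53794C})}{\log x}\big)$. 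The restriction $N \ge 1129$ is exactly what makes these divisor-function bounds valid.

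Next, pick $c$ as a function of a free parameter $K>0$ so that the second term equals $\frac{A}{\log p}K\cdot\frac{\log p}{\log N}$. Solving $A x^{D}e^{-(1/2-c)x+(\cdots)} = AK/x$ gives $c = V_x(K)$, by the definition of $V_x$. The hypothesis then reads
\begin{align}
    S(N,p)\le \frac{\log p}{\log N}\Big(\|\mu_p\|V_x(K)^{-1}+\tfrac{A}{\log p}K\Big) = \frac{\log p}{\log N}F_x(K),
\end{align}
valid whenever $V_x(K)\in(0,\tfrac12)$. Minimizing $F_x$ in $K$ is a calculus exercise. Setting the derivative to zero gives $\|\mu_p\|/(xV_x(K)^2) \approx A/\log p$, and solving this in the form $ue^u=\cdots$ yields the Lambert-$W$ expression $K(x)$ as in \cite[Appendix A]{KMRX}. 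Since any admissible $K$ gives a valid bound, exact optimality is not needed; it only matters that $K(x)$ is admissible.

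The remaining step, and the main obstacle, is to pass from $x=\log N$ to the fixed threshold $X$. I need to show that $x\mapsto F_x(K(x))$ is nonincreasing for $x\ge X$, or more simply that $F_x(K(X))\le F_X(K(X))$ for $x\ge X$ with admissibility maintained. Since $F_x(K)$ depends on $x$ only through $V_x(K)$, it suffices to show that $V_x(K)$ increases in $x$ for the relevant $K$. Differentiating,
\begin{align}
    \partial_x V_x(K) = \frac{\log(x^{D+1}/K)}{x^2}-\frac{D+1}{x^2}+\frac{\log(2^{1.38407B+1.53794C})}{x(\log x)^2}.
\end{align}
This is positive precisely when the displayed hypothesis $\log K < (D+1)(\log x-1)+\frac{x\log(2^{\cdots})}{(\log x)^2}$ holds. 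This is why that hypothesis is imposed for all $x\ge X$. One also checks that $V_X(K(X))\in(0,\tfrac12)$ persists as $x$ grows, using $A>4\|\mu_p\|$ to keep $K$ small enough.

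With monotonicity in hand, for $N\ge e^X$ take $K=K(X)$ and $c=V_{\log N}(K(X))\ge V_X(K(X))$. This gives $S(N,p)\le \frac{\log p}{\log N}F_{\log N}(K(X))\le\frac{\log p}{\log N}F_X(K(X))$, as claimed.
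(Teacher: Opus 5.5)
Your reduction is right and matches what the paper does, namely defer to the proof of \cite[Lemma 3.4]{KMRX}. Set $x=\log N$ and $L:=\log\bigl(2^{1.38407B+1.53794C}\bigr)$. The Robin and Nicolas--Robin bounds hold for all $N\ge 3$, and $\log\log N=\log x$ exactly. With the choice $c=V_x(K)$ they give $S(N,p)\le \frac{\log p}{\log N}F_x(K)$ whenever $V_x(K)\in(0,\frac12)$, and freezing $K=K(X)$ is a legitimate way to reach $F_X(K(X))$.

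However, the step you call the main obstacle is misargued. With $K$ frozen at $K(X)$, positivity of $\partial_x V_x(K(X))$ for all $x\ge X$ means $\log K(X)<R(x)$, where $R(x):=(D+1)(\log x-1)+\frac{xL}{(\log x)^2}$. The hypothesis instead says $\log K(x)<R(x)$. That concerns $K(x)$, not the frozen value, and does not give your inequality pointwise: in the relevant range $K(x)<K(X)$ for $x>X$. The repair is to use the hypothesis only at $x=X$ and observe that
\[
R'(x)=\frac{D+1}{x}+\frac{L(\log x-2)}{(\log x)^3}>0 \qquad (x\ge e^2),
\]
so $\log K(X)<R(X)\le R(x)$. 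The same chain also gives $\log K(X)<(D+1)\log x+\frac{xL}{\log x}$, i.e.\ $V_x(K(X))<\frac12$.

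The ``for all $x\ge X$'' form of the hypothesis is what the other natural route needs. There one lets $K=K(x)$ vary and uses stationarity, $\partial_K F_x(K)|_{K=K(x)}=0$, so that $\frac{d}{dx}F_x(K(x))=-\|\mu_p\|V_x(K(x))^{-2}\,\partial_x V_x(K)|_{K=K(x)}<0$. Your frozen-$K$ route has the merit of not needing $K(x)$ to be an exact critical point of $F_x$. The printed $K(x)$ is not one: the exact minimizer carries an extra factor $x\log p$ in front and $\sqrt{\log p}$ inside $W$. Compare the analogous $K$ in Appendix~\ref{subsection:explicit-form-C}.

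You also never justify the starting point $V_X(K(X))>0$. The displayed hypothesis only bounds $K$ from above, hence $V$ from above, and $A>4\|\mu_p\|$ plays no role here. Positivity comes from the Lambert-$W$ form of $K$, via $W(z)e^{W(z)}=z$. For the exact minimizer one gets $XV_X(K(X))=2W(\cdot)>0$ automatically. For the printed formula one gets $XV_X(K(X))=2W(z)-\log X$, with $z$ the argument of $W$ at $x=X$. This is positive for large $X$ (certainly for the $X\ge 593.590$ used afterwards) but must be checked or assumed, since otherwise $F_X(K(X))$ can be negative. Once it holds, your monotonicity gives $V_{\log N}(K(X))\ge V_X(K(X))>0$, and the conclusion follows exactly as you wrote.

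Two minor points. Since $\partial_K V_x(K)=1/(xK)$, the stationarity condition is $\|\mu_p\|\log p=A\,xK\,V_x(K)^2$; you dropped the $K$, which is harmless because you don't use optimality. And $N\ge1129$ is not what makes the divisor bounds valid.
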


Then, a modification of \cite[Lemma 3.5]{KMRX} using Lemma \ref{lem:KMRX-lem-3.4-substitute} in place of \cite[Lemma 3.4]{KMRX} gives that for all $N \geq e^X \geq e^{593.590}$,
\begin{align}
    \left| \frac{\# \operatorname{eigen}_{S_2^\sigma(N)}(\mathbf{T}'_p) \cap I}{\dim S_2^\sigma(N)} - \int_{-2}^2 \chi_I (x) \, d \mu_p(x)  \right| \leq F_{X}(K(X)) \cdot \frac{\log p}{\log N}, \label{eqn:KMRX-lem-3.5-substitute}
\end{align}
with the fixed constants $A = 2870.83, \ B = D = 1,$ and $C = 2.$ Then we can set $C_p^{\operatorname{AL}}(X) := F_X(K(X))$ for these choices of $A,B,C,D$.
Note that the constant $2870.83$ in the bound \eqref{eqn:KMRX-lem-3.5-substitute} is obtained by multiplying the constant $4550.16$ from \cite[Lemma 3.5]{KMRX} by $\frac{\log 2}{\log 3}$. 

Similar explicit forms for $C_p^{\nw, \operatorname{AL}}$ and $C_p^{\operatorname{AL}, \operatorname{pm}}$ can be obtained immediately by modifying \cite[Lemma 3.6]{KMRX} and \cite[Lemma 3.8]{KMRX} analogously.

\section*{Acknowledgments}

The authors would like to acknowledge Peter Sarnak and Will Sawin for helpful comments and suggestions. This work was supported by National Science Foundation grant DMS-2349174. Hui Xue is supported by Simons Foundation grant MPS-TSM-00007911.

\bibliographystyle{plain}
\bibliography{ss-biblio.bib}

\end{document}